\documentclass[letterpaper,leqno]{amsart}
\usepackage[top=1.5in, bottom=1.1in, left=1.2in, right=1.2in]{geometry}
\usepackage{amsmath,amsfonts,amssymb,amsthm}
\usepackage{breqn}
\numberwithin{equation}{section}

\newcommand{\ud}{\,\mathrm{d}}
\newcommand{\supp}{\mathrm{supp}\,}
\newcommand{\adj}{\mathrm{ad}}
\newcommand{\Id}{\mathrm{Id}}
\newcommand{\sym}{\mathrm{Sym}}

\newtheorem{thm}{Theorem}
\newtheorem{lemma}[thm]{Lemma}
\newtheorem{cor}[thm]{Corollary}
\newtheorem{prop}[thm]{Proposition}
\newtheorem{claim}{Claim}
\newtheorem{example}{Example}
\newtheorem{remark}{Remark}

\begin{document}
\title{Multi-center Vector Field Methods for Wave Equations}
\author{Avy Soffer, Jianguo Xiao}
\date{}
\maketitle

\section*{Abstract}
We develop the method of vector-fields to further study Dispersive Wave Equations.

Radial vector fields are used to get a-priori estimates such as the Morawetz estimate on solutions of Dispersive Wave Equations.

A key to such estimates is the repulsiveness or nontrapping conditions on the flow corresponding to the wave equation.
Thus this method is limited to potential perturbations which are repulsive, that is the radial derivative pointing away from the origin.
In this work, we generalize this method to include potentials which are repulsive relative to a line in space (in three or higher dimensions), among other cases.
This method is based on constructing multi-centered vector fields as multipliers, cancellation lemmas
and energy localization.

\section{Introduction and Notation}
 We consider the Schr\"odinger equation in three or higher dimensions.
Most of the analysis is done for the Schr\"odiger equation with a potential term only ($I\equiv 0$):
$$
 i\frac{\partial \psi}{\partial t} = (-\Delta + V(x)) \psi + \lambda I(x,t, |\psi|)\psi.
$$
As applications, our results provide a method for proving decay estimates for a large class of time dependent Hamiltonians, 
as well as nonlinear Dispersive equations.
Previously, such estimates were impossible, since the known proofs are generally based on resolvent techniques,
near threshold energies at least.

A-priori estimates play a fundamental role in controlling the large time behavior of Dispersive Wave Equations.
Besides the classical energy estimates, a key class of estimates are the Morawetz type bounds.
The Morawetz estimates were first introduced by Cathleen Morawetz \cite{morawetz1968time,morawetz1977decay} for nonlinear Klein-Gordon equation.
Later Lin and Strauss \cite{lin1978decay} introduced the Morawetz estimates into the context of NLS equation to prove the
scattering of defocusing NLS equation.
Such estimates can be obtained by constructing a multiplier $\gamma$ which has positive commutator with the Hamiltonian $H$, 
that is, for some operator $B$,
\begin{equation} \label{eqn:commutator-estimate}
 i[H, \gamma] \geq B^* B \geq 0,
\end{equation}
in the sense of forms. For Schr\"odinger equations, using Ehrenfest Theorem, one get a monotonic formula:
\begin{equation}
 \frac{\ud}{\ud t} (\psi, \gamma \psi) = (\psi, i[H, \gamma] \psi) \geq (\psi, B^* B \psi) = \| B\psi \|_{L^2}^2.
\end{equation}
Here $(\cdot, \cdot)$ is the inner product on $L^2_{x}(\mathbb{R}^n)$.
Then by integration over time and conservation laws, it follows Morawetz type estimate
\begin{equation}
\int \| B \psi(t) \|^2 \ud t \leq 2 \sup_{t \in \mathbb{R}}\|\gamma \psi(t) \|_{L^2} \| \psi(t)\|_{L^2}.
\end{equation}
The commutator estimate (\ref{eqn:commutator-estimate}) also implies that Morawetz type bounds hold for the Wave Equation.
To see that, one can use the Heisenberg type identity from the paper \cite{blue2005wave}:
\begin{equation}
 \frac{\ud}{\ud t}((u, A u_t) - (u_t, A u)) = (u, [H,A]u).
\end{equation}
Here $u$ is the solution to Wave Equation $\partial_t^2 u + Hu =0$, and $A$ is time independent operator.

These multipliers $\gamma$ are usually generated by radial vector fields, i.e. vector fields centered at the origin.
In the original works of Morawetz, she introduced and used the radial vector fields
$\vec{f} = x = (x_1, \dots, x_n)$ and $\vec{f} = x/|x| = (x_1/|x|, \dots, x_n/|x|)$ for $n \geq 3$.
The corresponding multipliers $M_f = \vec{f} \cdot (-i\nabla_x) + (-i\nabla_x) \cdot \vec{f}$ then play a fundamental role
in establishing global existence and scattering theory for Schr\"odinger type equations, as well as other wave equations.
Because the commutators with the free Hamiltonian are positive, i.e. $i[-\Delta, M_f] \geq B^* B$ for some operator $B$.

Further generalization of these vector fields were introduced by many authors,
in different context \cite{lavine1971commutators,sigal1987n,graf1990asymptotic,hunziker2000quantum,Blue-Sof,blue2006uniform,blue2007space,blue2009phase,tataru2008parametrices,dafermos2005proof,soffer2011monotonic,dafermos2010new,luk2012vector}.
By considering such multipliers on product fields, Tao \cite{tao2006nonlinear} proved new kind of a-priori estimates for the Schr\"odinger equation.
A key restriction on the interaction $I(x,t,\psi)$ is repulsiveness:
it is required that
\begin{equation}
 \int_{\mathbb{R}^n} \bar{\psi} \left \{ i[\lambda I(x,t,\psi), M_f] -x \cdot \nabla_x V(x)\right \}  \psi \ud^n x\geq 0.
\end{equation}
Therefore, we are restricted to repulsive (defocusing) nonlinearities, and potentials which are repulsive(see e.g.\cite{metcalfe2009decay,killip2015energy}) :
\begin{equation}
 -x \cdot \nabla_x V(x) \geq 0.
\end{equation}
So, in particular, the problem of global existence and scattering theory for the NLS equation with a general (even smooth) potential $V(x)$
which is positive, is open for large data. That is for the equation
\begin{equation}
 i\frac{\partial \psi}{\partial t} = (-\Delta + V(x)) \psi + \lambda |\psi|^{p-1} \psi
\end{equation}
with $V(x)\geq 0$, $\lambda >0$, $1+p \geq p_c$, $p_c = \frac{2n}{n-2}$, $n$ is the dimension of space.
This problem is also open for low power nonlinearities.
In both cases, one needs the Morawetz estimate.
The source of the problem with non-repulsive interactions is the existence of bounded (in space) geodesics, for the classical flow.
It is then clear that we can not have a growing quantity along such geodesics,
and it is this growth which is responsible for the positivity in the Morawetz type inequalities.
It is the basis behind the method of vector fields, being a generalization of the idea of Lyapunov function.
For our approach to work then, we need to employ ``Quantum Effects'' as well.
What we show is that we can construct a monotonic quantity under the flow, outside an arbitrary small (in measure) set,
containing the bounded geodesics.
Then we use compactness arguments, energy localization and the positivity, via Hardy's inequalities to absorb the negative part.

In this work, we introduce a construction of multi-centered vector fields, 
which we then use to obtain Morawetz type estimates (positive commutators)
for potentials which are repulsive relative to a line rather than a point:
let $x=(x_1, \vec{y})$, $x_1 \in \mathbb{R}$, $\vec{y} \in \mathbb{R}^{n-1}$, $n\geq 3$.
Then our condition of repulsiveness on $V(x) = V(x_1, \vec{y})$ reads
\begin{equation}
 -\vec{y} \cdot \nabla_{\vec{y}} V(x_1, \vec{y}) \geq 0,
\end{equation}
as well as other regularity conditions, but no sign assumptions.
See Theorems \ref{thm:two-bump}, \ref{thm:latice}, \ref{thm:general} and \ref{thm:interaction-morawetz}.

Further generalizations include the proof of Morawetz type estimates for localized frequencies near zero or infinity
- for rather general classes of potentials, not necessarily repulsive.
See Theorems \ref{thm:high-energy} and \ref{thm:low-energy}.

The construction of the multi-centered multipliers involves the following steps:

First, we introduce a cancellation lemma: it states that if a potential bump is repulsive w.r.t. the origin in $\mathbb{R}^n$,
then the sum of multipliers centered at $c$ and $-c$, $c \in \mathbb{R}^n$,
\begin{equation}
 \gamma_c + \gamma_{-c}
\end{equation}
has positive commutator with $V(x)$:
\begin{equation}
 i[V(x), \gamma_c + \gamma_{-c}] \geq \theta(x) \geq 0,
\end{equation}
where
\begin{equation}
\begin{split}
 \gamma_c \psi &\equiv -i \{ (\nabla_x F(|x-c|)) \cdot \nabla_x \psi + \nabla_x \cdot [(\nabla_x F(|x-c|)) \psi] \} \\
 &= -i \sum_j (\partial_j F(|x-c|))(\partial_j \psi) + \partial_j (\partial_j F(|x-c|) \psi),
\end{split}
\end{equation}
$F(|x|)$ is a properly chosen radial function with bounded derivative.
This lemma and a generalization to a sum of $\gamma_c$'s centered on a line plays a key role in the analysis.
Then, the next observation is that for a potential which is repulsive in directions orthogonal to the line connecting $\vec{c}$ and $-\vec{c}$,
one can show, for any $\epsilon >0$, that for $N$ large enough
\begin{equation}
 i[V(x), \sum_{j=1}^{N} (\gamma_{c_j} + \gamma_{-c_j})] \geq \theta(x)
\end{equation}
with $c_j$ all on the same line, and such that $\theta(x) >0$ for all $x = (x_1,y)$ with $|y| > \epsilon$.
Here $x_1$ is the coordinate along the line containing all the $c_i$'s.

Next, one uses frequency decomposition.
On the region $|y| \leq \epsilon$ and low frequency, we use compactness to prove that this contribution vanishes in norm as $\epsilon \to 0$,
and therefore is dominated by the positive operator $i[-\Delta, \sum_{j=1}^{N} (\gamma_{c_j} + \gamma_{-c_j})]$.
For the high frequency part, we use that all regions with negative commutator, are dominated by the Laplacian part of the commutator,
provided the frequency cutoff is large enough, depending only on the size of $|\partial V/\partial x|$.

Other cases are also included, including potentials with nondefinite sign.

Another class of potentials are time dependent potentials.
The simplest cases are potentials which are axially repulsive in our sense and which are also moving,
in a compact interval, along the axial direction.
In particular, if $V(x_1, y)$ is a potential that satisfies our axial repulsiveness conditions,
then similar decay and a-priori estimates hold for the time dependent potential $V(x_1 + \beta(t),y)$, with $\sup_t |\beta(t)| < \beta_0 < \infty$.

Finally, it should be noted that small deviations from the axial axis are allowed:
suppose $V_{j}(x)$ is radial, smooth compactly supported and repulsive: $-x \cdot \nabla_x V_{j}(x) \geq 0$.
Then potentials $V(x)$ of the form
\begin{equation}
 \sum_{j=1}^{N} \beta_j V_{j}(x + a_j), ~~\beta_j >0
\end{equation}
with $a_j = (x_1(j), \vec{y}(j))$, $|\vec{y}(j)| <\delta \ll 1$, $x_1(j) \in \mathbb{R}$, $\vec{y}(j) \in \mathbb{R}^{n-1}$ will satisfy our conditions.

Now let us introduce some notations and preliminary results that we will use later.

Suppose $H = -\Delta + V(x)$, with $V(x)$ smooth and such that $H$ is a selfadjoint operator with $D(H) = D(-\Delta)$.
And we assume the dimension of space is three or higher.

Let $a$, $\sigma$ be some positive numbers, and for $x \in \mathbb{R}^n$, $r = |x|$.
Define
\begin{equation}\langle x \rangle_a = (1+a|x|^2)^{1/2},\end{equation}
\begin{equation}g(r) = \frac{1}{\langle x \rangle_a^{\sigma}} = \frac{1}{(1+ar^2)^{\sigma/2}},\end{equation}
\begin{equation}f(r) = \int_0^r g^2(s) \ud s = \int_0^{r} \frac{1}{(1+as^2)^{\sigma}} \ud s,\end{equation}
\begin{equation}
 M_{\sigma} = \int_0^{\infty} \frac{1}{(1+s^2)^{\sigma}} \ud s, \text{~~then~~} f(\infty) = \lim_{r \to \infty} f(r) = \frac{M_{\sigma}}{\sqrt{a}},
\end{equation}
\begin{equation} F(x) = F(r) = \int_0^{r} f(t) \ud t.\end{equation}
We require $\sigma > 1/2$, so that $M_{\sigma}$ exists and is finite.
We fix $a>0$, and omit the subscript of $\langle x \rangle_a$ in the following context.

Write $F_c(x) \triangleq F(|x-c|)$, where $c \in \mathbb{R}^n$ is the position of the center.
We define the multiplier $\gamma_c$ centered at $c$ as:
\begin{equation}
\gamma_c \triangleq i[-\Delta, F_c(x)] 
= -i(\frac{\partial}{\partial x} \cdot \nabla F_c + \nabla F_c \cdot \frac{\partial}{\partial x}).
\end{equation}
If one choose $f \equiv 1$ instead, then $\gamma_c$ become the multiplier used in proving (interaction) Morawetz estimate:
\begin{equation}
 \gamma_{c}^{Mor}= -i (\frac{x-c}{|x-c|} \cdot \nabla_x + \nabla_x \cdot \frac{x-c}{|x-c|}).
\end{equation}

By direct computation, we have
\begin{equation}\tag{C1}
i[-\Delta, \gamma_c] = -4 \partial_j (F_c)_{jk} \partial_k - \Delta^2 F_c, \end{equation}
\begin{equation}\tag{C2}
i[V(x), \gamma_c] = -2 \nabla F_c \cdot \nabla V(x), \end{equation}
\begin{equation}\tag{C3}
i[H, \gamma_c] = -4 \partial_j (F_c)_{jk} \partial_k - \Delta^2 F_c -2 \nabla F_c \cdot \nabla V(x). \end{equation}

For the function $F(x)$, we have its Hessian matrix:
\begin{equation}\tag{C4}
F_{jk} = \frac{f(r)}{r} \delta_{jk} + \left( -\frac{f(r)}{r} + g^2(r)\right) \frac{x_j x_k}{r^2}.\end{equation}
Since $F_c(x)$ is a translation of $F(x)$, we have the Hessian matrix of $F_c(x)$:
\begin{equation}\tag{C4'}
(F_c)_{jk} = \frac{f(|x-c|)}{|x-c|} \delta_{jk} + \left( -\frac{f(|x-c|)}{|x-c|} + g^2(|x-c|)\right) \frac{(x_j - c_j) (x_k - c_k)}{|x-c|^2}.
\end{equation}

Notice that the matrix $(x_j x_k)$ is of rank one, so the eigenvalues of $(F_{jk})$ are:
\begin{equation}\tag{C5}
\lambda_1 = g^2(r), \lambda_2 = \dots = \lambda_n = \frac{f(r)}{r}.\end{equation}
And the corresponding eigenvectors are:
$v_1 = (x_1,x_2,\dots,x_n)^T$, and $v_2, \dots, v_n$ are any $n-1$ independent vectors that are orthogonal to $v_1$.
The lowest eigenvalue is $\lambda_1 = g^2(r)$, thus we have the following:
\begin{equation}\tag{C6}
\Delta F = (n-1) \frac{f(r)}{r} + g^2(r), \text{~and~} -4 \partial_j F_{jk} \partial_k \geq -4 \partial_j g^2(r) \partial_j.	\end{equation}
We then compute:
\begin{equation}\tag{C7}
-\Delta^2 F= \frac{(n-1)(n-3)}{r^2}(\frac{f(r)}{r} - \frac{1}{(1+ar^2)^{\sigma}})
+ \frac{a\sigma (4n-2)}{(1+ar^2)^{\sigma + 1}} + \frac{-4a^2 \sigma (\sigma+1) r^2}{(1+ar^2)^{\sigma + 2}}.
\end{equation}

We also compute the derivatives of $g(r)$, which we will use later:
\begin{equation}\label{C8}
\frac{\partial}{\partial r} g(r) = \frac{-a\sigma r}{(1+ar^2)^{\sigma/2+1}},
\end{equation}
\begin{equation}\label{C9}
\frac{\partial^2}{\partial r^2} g(r) = \frac{-a\sigma}{(1+ar^2)^{\sigma/2+1}} + \frac{a^2\sigma(\sigma+2)r^2}{(1+ar^2)^{\sigma/2+2}},
\end{equation}
\begin{equation}\label{C10}
\Delta g(r) = \frac{-na\sigma}{(1+ar^2)^{\sigma/2+1}} + \frac{a^2\sigma(\sigma+2)r^2}{(1+ar^2)^{\sigma/2+2}}.
\end{equation}

Now we are ready to estimate the commutator $i[-\Delta, \gamma_c]$:
\begin{prop} \label{prop:Delta}
For $n\geq 3$, fixed positive numbers $a$, $\sigma$, and the mutiplier $\gamma_c$ as defined above,
\begin{equation}
\begin{split}
i[-\Delta,\gamma_c] \geq & - (4-\frac{\sigma}{(n-2)^2}) \frac{1}{\langle x-c \rangle^{\sigma}} \Delta \frac{1}{\langle x-c \rangle^{\sigma}} \\
& -4 \partial_j \{\frac{f(|x-c|)}{|x-c|} - g^2(|x-c|)\} \{\delta_{jk} - \frac{(x_j-c_j) (x_k-c_k)}{|x-c|^2}\} \partial_k \\
& + \frac{(n-1)(n-3)}{|x-c|^2}(\frac{f(|x-c|)}{|x-c|} - g^2(|x-c|)) \\
& + \frac{\sigma (1-3a|x-c|^2)^2}{4|x-c|^2 (1+a|x-c|^2)^{\sigma+2}} \\
\geq & - (4-\frac{\sigma}{(n-2)^2}) \frac{1}{\langle x-c \rangle^{\sigma}} \Delta \frac{1}{\langle x-c \rangle^{\sigma}}.
\end{split}\end{equation}
In particular, if $0\leq \sigma < 4(n-2)^2$, then $i[-\Delta, \gamma_c]$ is a positive operator.
\end{prop}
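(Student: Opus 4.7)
The plan is to work directly from (C1), splitting $-4\partial_j(F_c)_{jk}\partial_k$ into a scalar piece and a manifestly positive tangential piece using (C4'), and then handling the scalar piece via an integration-by-parts identity together with the sharp Hardy inequality. Concretely, (C4') can be rewritten as
\begin{equation*}
(F_c)_{jk} = g^2(|x-c|)\delta_{jk} + \Bigl(\tfrac{f(|x-c|)}{|x-c|} - g^2(|x-c|)\Bigr)\Bigl[\delta_{jk} - \tfrac{(x_j-c_j)(x_k-c_k)}{|x-c|^2}\Bigr].
\end{equation*}
Since $g$ is decreasing in $r$, the scalar coefficient $f/|x-c|-g^2$ is non-negative and the bracket is the orthogonal projection onto the tangent plane to spheres about $c$; the corresponding derivative term then is one of the (non-negative) quantities in the stated lower bound and can be peeled off at the start. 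What remains is to bound $-4\partial_j g^2 \partial_j - \Delta^2 F_c$ from below by the other three contributions.

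For the derivative term I would invoke the quadratic-form identity
\begin{equation*}
\int g^2 |\nabla \psi|^2 \ud x = \int |\nabla(g\psi)|^2 \ud x + \int g\Delta g\, |\psi|^2 \ud x,
\end{equation*}
proved by expanding $|\nabla(g\psi)|^2$ and integrating by parts once. This reads $-4\partial_j g^2 \partial_j = -4\, g\circ\Delta\circ g + 4 g(x)\Delta g(x)$ as quadratic forms, the second summand being explicit from (C10). I would then split the operator piece as $-4 g\Delta g = -(4-\tfrac{\sigma}{(n-2)^2})g\Delta g - \tfrac{\sigma}{(n-2)^2}g\Delta g$, retain the first as the leading term of the claimed lower bound, and apply the sharp Hardy inequality $\|\nabla\phi\|^2\geq \tfrac{(n-2)^2}{4}\int |\phi|^2/|x-c|^2 \ud x$ (valid for $n\geq 3$) to $\phi = g\psi$ to convert the auxiliary fraction into multiplication by $\tfrac{\sigma}{4|x-c|^2(1+a|x-c|^2)^\sigma}$.

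At this point every remaining term is a multiplication operator, and it remains to check that the sum of the three multiplication contributions --- $-\Delta^2 F_c$ from (C7), the correction $4 g\Delta g$ from (C10), and the Hardy gain --- equals precisely the $\tfrac{(n-1)(n-3)}{|x-c|^2}(\tfrac{f}{|x-c|} - g^2)$ term plus $\tfrac{\sigma(1-3a|x-c|^2)^2}{4|x-c|^2(1+a|x-c|^2)^{\sigma+2}}$. After setting $t=a|x-c|^2$ and factoring out $\sigma/[4|x-c|^2(1+a|x-c|^2)^{\sigma+2}]$, this collapses to the polynomial identity $(1+t)^2 - 8t(1+t) + 16t^2 = (1-3t)^2$, a completing-the-square calculation that pins down exactly why the Hardy constant $(n-2)^2/4$ marries the weight $\langle x\rangle^{-\sigma}$ so cleanly. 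The weaker second inequality and the positivity claim for $0\leq\sigma<4(n-2)^2$ are then immediate: discard the tangential form, the $(n-1)(n-3)$ term (non-negative because $n\geq 3$ and $f(r)/r\geq g^2(r)$), and the perfect square, and observe that $4-\sigma/(n-2)^2>0$ makes the surviving $-(4-\sigma/(n-2)^2)\, g\Delta g$ a non-negative form.

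The hardest step will be bookkeeping rather than anything conceptual: one must keep straight which occurrences of $g\Delta g$ refer to the operator composition $g\circ\Delta\circ g$ and which to the multiplication by the function $g(x)\Delta g(x)$, and verify that the $a$-dependent constants from Hardy, from $-\Delta^2 F$, and from $g\Delta g$ conspire exactly into a perfect square. The reason this works at all is the sharpness of Hardy's inequality --- any suboptimal constant would leave a leftover of definite sign that cannot be absorbed, and this sharpness is what fixes the precise coefficient $\sigma/(n-2)^2$ in the statement.
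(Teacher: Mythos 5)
Your proposal is correct and follows essentially the same route as the paper: peel off the tangential piece via Claim~\ref{claim:1}, rewrite $-4\partial_j g^2\partial_j$ as $-4g(-\Delta)g + 4g(\Delta g)$ (your quadratic-form identity is exactly this, in integrated form), split off the coefficient $4-\sigma/(n-2)^2$, and apply the sharp Hardy inequality to collapse the remaining multiplication terms into the perfect square $\sigma(1-3a|x-c|^2)^2/[4|x-c|^2(1+a|x-c|^2)^{\sigma+2}]$. Your polynomial $(1+t)^2-8t(1+t)+16t^2$ is a rearrangement of the paper's $(1+t)^2-8t+8t^2$, and both equal $(1-3t)^2$, so the bookkeeping checks out.
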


\begin{proof}
From (C1) and (C4),
\begin{equation}
\begin{split}
 i[-\Delta,\gamma_c] = & -4 \partial_j \frac{1}{\langle x-c \rangle^{2\sigma}} \partial_j - \Delta^2 F_c \\
 & -4 \partial_j \{\frac{f(|x-c|)}{|x-c|} - g^2(|x-c|)\} \{\delta_{jk} - \frac{(x_j-c_j) (x_k-c_k)}{|x - c|^2}\} \partial_k.
\end{split}\end{equation}
The third term is positive because of the following claim (with substitution $x \to x-c$):
\begin{claim} \label{claim:1}
$f(r)/r - g^2(r) \geq 0$ and the matrix $(\delta_{jk} - \frac{x_j x_k}{|x|^2})$ is positive semi-definite.
\end{claim}
\begin{proof}[Proof of Claim \ref{claim:1}]
$f(r) = \int_0^r g^2(s) \ud s \geq r \min_{s \in [0,r]}(g^2(s)) = r g^2(r)$, since $g^2(s)$ is decreasing.
Thus we get $f(r)/r -g^2(r) \geq 0$.

The matrix $(\frac{x_j x_k}{|x|^2}) = (\frac{x}{|x|}) (\frac{x}{|x|})^T$ is symmetric and of rank one, and its only nonzero eigenvalue is 1.
So the matrix $(\delta_{jk} - \frac{x_j x_k}{|x|^2})$ is still symmetric, with eigenvalues: $\lambda_1 = 0, \lambda_2 = \dots = \lambda_n = 1$.
And hence the matrix is positive semi-definite.
\end{proof}

Compute using (C7) and (\ref{C10})
\begin{equation}
\begin{split}
 & -4 \partial_j \frac{1}{\langle x-c \rangle^{2\sigma}} \partial_j - \Delta^2 F_c\\
= & -4 \frac{1}{\langle x-c \rangle^{\sigma}} \Delta \frac{1}{\langle x-c \rangle^{\sigma}} + 4g(|x-c|)(\Delta g(|x-c|)) - \Delta^2 F_c \\
= & -4 \frac{1}{\langle x-c \rangle^{\sigma}} \Delta \frac{1}{\langle x-c \rangle^{\sigma}} + \frac{-2a\sigma +2a^2 \sigma |x-c|^2}{(1+a|x-c|^2)^{\sigma+2}} \\
& + \frac{(n-1)(n-3)}{|x-c|^2}(\frac{f(|x-c|)}{|x-c|} - g^2(|x-c|))
\end{split}
\end{equation}

Notice that $\frac{f(r)}{r} \geq \frac{1}{(1+ar^2)^{\sigma}}$, and that $n \geq 3$, so we have
\begin{equation}\frac{(n-1)(n-3)}{|x-c|^2}(\frac{f(|x-c|)}{|x-c|} - g^2(|x-c|)) \geq 0.\end{equation}
If we use Hardy's inequality, we get
\begin{equation}\begin{split}
& - \frac{\sigma}{(n-2)^2} \frac{1}{\langle x-c \rangle^{\sigma}} \Delta \frac{1}{\langle x-c \rangle^{\sigma}} + \frac{-2a\sigma +2a^2 \sigma |x-c|^2}{(1+a|x-c|^2)^{\sigma+2}} \\
\geq & \frac{\sigma}{(n-2)^2} \frac{1}{\langle x-c \rangle^{\sigma}} \frac{(n-2)^2}{4|x-c|^2} \frac{1}{\langle x-c \rangle^{\sigma}} + \frac{-2a\sigma +2a^2 \sigma |x-c|^2}{(1+a|x-c|^2)^{\sigma+2}} \\
= & \frac{\sigma(1+a|x-c|^2)^{2} + 4|x-c|^2 (-2a\sigma +2a^2 \sigma |x-c|^2)}{4|x-c|^2 (1+a|x-c|^2)^{\sigma+2}} \\
= & \frac{\sigma (1-3a|x-c|^2)^2}{4|x-c|^2 (1+a|x-c|^2)^{\sigma+2}} \geq 0.
\end{split}\end{equation}
Sum up the above inequalities, we obtain the desired result.
\end{proof}

\section{Two-Bump Potential}
In this section, we consider the easiest case of nonrepulsive potential: $V(x)$ consists of two spherically symmetric bump functions.
That is $H = -\Delta + V_{-1}(|x+b|) + V_1(|x-b|)$, 
$V_{-1}$ and $V_1$ are real valued radially decreasing smooth potentials of compact support, 
with centers at $x=-b$ and $x=b$ respectively.
Under these conditions, we have that $H$ is a selfadjoint operator, $D(H) = D(-\Delta)$.

The main purpose of this section is to construct $\gamma_N$ as a sum of $\gamma_c$'s, such that $i[H, \gamma_N]$ be a positive operator.

\begin{thm} \label{thm:two-bump}
Let $H$ be a Schr\"odinger operator with two-bump potential, $H = -\Delta + V_{-1}(|x+b|) + V_1(|x-b|)$.
Assume $V_{-1}(|x+b|)$ and $V_1(|x-b|)$ are real valued radially decreasing smooth potentials of compact support, with centers at $x=-b$ and $x=b$ respectively.
If $1/2 < \sigma < 4(n-2)^2$, 
then for any $0<\epsilon<1$, there exists $N_{\epsilon}$, such that for any $N \geq N_{\epsilon}$,
\begin{equation}
\begin{split}
i[H,\gamma_N] &\geq (1 -\epsilon) i[-\Delta, \gamma_N] \\
&\geq -(1- \epsilon)(4 - \frac{\sigma}{(n-2)^2}) \sum_{k=-N}^N \frac{1}{\langle x- kb \rangle^{\sigma}} \Delta \frac{1}{\langle x- kb \rangle^{\sigma}}.
\end{split}
\end{equation}
Here $\gamma_N = \sum_{k=-N}^N \gamma_{c_k}$, with $c_k = kb$.
\end{thm}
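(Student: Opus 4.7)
The plan is to use (C3) termwise and sum, so that $i[H,\gamma_N]=i[-\Delta,\gamma_N]+i[V_1,\gamma_N]+i[V_{-1},\gamma_N]$. The second inequality of the theorem is then immediate from applying Proposition~\ref{prop:Delta} to each $\gamma_{c_k}$ and summing. The nontrivial content is the form bound $i[V,\gamma_N]\ge-\epsilon\,i[-\Delta,\gamma_N]$ for $N\ge N_\epsilon$.

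The main structural input is the cancellation lemma from the introduction. Since $V_1$ is radially decreasing about $b$, the shift $\tilde{x}=x-b$ makes it repulsive about the origin, so the $b$-symmetric pair $\gamma_{c_k}+\gamma_{c_{2-k}}$ satisfies $i[V_1,\gamma_{c_k}+\gamma_{c_{2-k}}]\ge 0$. Regrouping
\begin{equation*}
 \gamma_N \;=\; \gamma_{c_1} \;+\; \sum_{k=2-N}^{0}\bigl(\gamma_{c_k}+\gamma_{c_{2-k}}\bigr) \;+\; \gamma_{c_{-N}} \;+\; \gamma_{c_{1-N}},
\end{equation*}
the self-pair gives $i[V_1,\gamma_{c_1}]=2 f(|x-b|)|V_1'(|x-b|)|\ge 0$ by a direct (C2) computation (the radial gradient of $F$ at its center is parallel to $\nabla V_1$), the bulk pairs are non-negative, and one is left with the boundary residual $i[V_1,\gamma_{c_{-N}}+\gamma_{c_{1-N}}]$. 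A symmetric argument around $-b$ yields $i[V_{-1},\gamma_N]\ge i[V_{-1},\gamma_{c_{N-1}}+\gamma_{c_N}]$.

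To absorb the residuals into $\epsilon\cdot i[-\Delta,\gamma_N]$, I would integrate by parts: each multiplication operator $-2\nabla F_c\cdot\nabla V_{\pm 1}$ becomes $2V_{\pm 1}\Delta F_c$ (of size $O(1/N)$ on $\supp V_{\pm 1}$, since (C6) gives $\Delta F_c=(n-1)f(r)/r+g^2(r)$ with $r\sim N|b|$) plus the cross-term $2V_{\pm 1}\nabla F_c\cdot\nabla|u|^2$. Proposition~\ref{prop:Delta} applied to $\gamma_{c_{\pm 1}}$ together with Hardy's inequality, exploiting the boundedness of $|x\mp b|$ and $\langle x\mp b\rangle$ on the compact sets $\supp V_{\pm 1}$, gives
\begin{equation*}
 \|u\|_{L^2(\supp V_{\pm 1})}^2+\|\nabla u\|_{L^2(\supp V_{\pm 1})}^2 \;\lesssim\; \langle u,i[-\Delta,\gamma_N]u\rangle,
\end{equation*}
so that Cauchy--Schwarz applied to the cross-term yields the absorption, modulo the prefactor $f(\infty)\|V_{\pm 1}\|_\infty$.

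The main obstacle is that this prefactor, coming from $\|\nabla F_{c_{\pm N}}\|_{L^\infty(\supp V)}\to f(\infty)$, does \emph{not} shrink with $N$, so the naive argument only gives a fixed $(1-C_0)$ rather than an arbitrary $(1-\epsilon)$. To extract the $\epsilon(N)\to 0$ one must exploit a further cross-cancellation between the left and right boundary groups: the four residual multipliers $\gamma_{c_{-N}},\gamma_{c_{1-N}},\gamma_{c_{N-1}},\gamma_{c_N}$ should be reorganized as the origin-symmetric pairs $(c_{-N},c_N)$ and $(c_{1-N},c_{N-1})$, whose vector sums $\nabla F_{c_{-N}}+\nabla F_{c_N}$ and $\nabla F_{c_{1-N}}+\nabla F_{c_{N-1}}$ vanish uniformly on $\supp V$ at rate $O(N^{-\min(1,2\sigma-1)})$ (the leading directional pieces $\pm f(\infty)\hat b$ cancel, and the subleading pieces decay with $N$ thanks to the monotonicity and finite limit of $f$). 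Reconciling this regrouping with the $b$- and $(-b)$-pair decompositions while keeping each $\gamma_{c_k}$ appearing exactly once yields a quantitative boundary residual of size $\epsilon(N)\to 0$, which closes the argument.
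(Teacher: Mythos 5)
Your setup is right, and you correctly diagnose the key obstruction: after the $b$- and $(-b)$-symmetric pairings, the boundary residuals $i[V_1,\gamma_{c_{-N}}+\gamma_{c_{1-N}}]$ and $i[V_{-1},\gamma_{c_{N}}+\gamma_{c_{N-1}}]$ remain of size $\sim f(\infty)\|V_j'\|_\infty$, independent of $N$. But the proposed cure — a cross-cancellation obtained by reorganizing these four residual multipliers into the origin-symmetric pairs $(c_{-N},c_N)$ and $(c_{1-N},c_{N-1})$ — does not work, for two concrete reasons. First, the residual centers for $V_1$ are $c_{-N}$ and $c_{1-N}$, both far to the \emph{left} of $\supp V_1$; on $\supp V_1$ the fields $\nabla F_{c_{-N}}$ and $\nabla F_{c_{1-N}}$ both point approximately in the $+\hat b$ direction with magnitude $\approx f(\infty)$, so they \emph{add} to $\approx 2f(\infty)\hat b$ rather than cancel. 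The vanishing sum $\nabla F_{c_{-N}}+\nabla F_{c_N}=O(N^{-\min(1,2\sigma-1)})$ that you correctly identify pairs a far-left center with a far-right center; but $\gamma_{c_N}$ is not among the residuals left over for $V_1$. Second, introducing $\gamma_{c_N}$ into the residual requires removing it from a $b$-symmetric pair $(\gamma_{c_N}+\gamma_{c_{2-N}})$, and the leftover $\gamma_{c_{2-N}}$ acting on $V_1$ has no favorable sign; the $b$-pairing is exactly what produces the positivity of the bulk terms, so it cannot be traded for an origin-pairing without destroying it.

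What the paper actually does is accept that the residual does not shrink pointwise. Instead it shows (Corollary~\ref{cor:log-accumulate}, Proposition~\ref{prop:tube}) that the $b$-paired positive bulk accumulates like $|V_j'|f(\infty)\frac{\delta^2}{R}\log N$ on $\{|\vec y|>\delta\}$, so that the set where $i[V_j,\gamma_N]<0$ shrinks to a thin tube $\{|\vec y|\le\delta_N\}$ with $\delta_N\to 0$, while $i[V_j,\gamma_N]\ge -4f(\infty)|V_j'|\chi_N^{(j)}$ there. The remaining negative part, which is bounded but supported in a shrinking set, is then absorbed by $\epsilon\cdot i[-\Delta,\gamma_N]$ in Theorem~\ref{thm:control} via a frequency split $P_K+Q_K$: the high-frequency piece is killed by choosing $K\sim\|\widetilde V\|_\infty/\epsilon$, and the low-frequency piece is killed by the compactness argument of Lemma~\ref{lemma:compact}, namely that $\chi^{(j)}(x)Q_K|x|$ is compact so $\|\chi_N^{(j)}(x)Q_K|x|\|\to 0$ as $\supp\chi_N^{(j)}$ shrinks. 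This compactness step is the crux and is entirely missing from your proposal; without it, your Cauchy--Schwarz/Hardy absorption gives, as you yourself note, only a fixed constant, not an arbitrary $\epsilon$.
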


To prove Theorem \ref{thm:two-bump}, we need the following lemma to control the size of $i[V_{-1}+V_1,\gamma_N]$.

\begin{lemma}[Cancellation lemma]
For any radially symmetric and decreasing real valued $C^1$ potential $V_0(x) = V_0(|x|)$ in $\mathbb{R}^n$,
and any $c = (c_1, \vec{0}) \in \mathbb{R}^n$ with $\vec{0} = (0, \dots, 0) \in \mathbb{R}^{n-1}$, we have
\begin{equation}i[V_0(x), \gamma_{-c} + \gamma_{c}] \geq -2 V_0'(|x|) \min\{f(|x+c|),f{|x-c|}\} \frac{2|\vec{y}|^2}{|x|} \frac{1}{|x|+|c|} \geq 0.
\end{equation}
Here we write $x = (x_1, \vec{y}) \in \mathbb{R}^n$.
\end{lemma}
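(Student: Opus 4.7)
The plan is to unpack the commutator using (C2) and exploit the radial structure of both $V_0$ and $F$. By (C2),
\begin{equation*}
 i[V_0, \gamma_c + \gamma_{-c}] = -2(\nabla F_c + \nabla F_{-c}) \cdot \nabla V_0;
\end{equation*}
since $V_0$ is radial and $F_c(x) = F(|x-c|)$ with $F' = f$, we have $\nabla V_0 = V_0'(|x|)\, x/|x|$ and $\nabla F_c = f(|x-c|)(x-c)/|x-c|$. Writing $r_{\pm} = |x \mp c|$ and $p_{\pm} = (x \mp c) \cdot x$, this becomes
\begin{equation*}
 i[V_0, \gamma_c + \gamma_{-c}] = \frac{-2 V_0'(|x|)}{|x|}\Bigl[\frac{f(r_-)\,p_-}{r_-} + \frac{f(r_+)\,p_+}{r_+}\Bigr].
\end{equation*}
The prefactor is nonnegative since $V_0$ is radially decreasing, so the task reduces to a pointwise lower bound for the bracket.

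The operator $\gamma_c + \gamma_{-c}$ is symmetric under $c \leftrightarrow -c$, so I may assume $r_- \leq r_+$, i.e.\ $c_1 x_1 \geq 0$. In that regime $\min\{f(r_-), f(r_+)\} = f(r_-)$ and $p_+ = |x|^2 + c_1 x_1 \geq 0$; combined with monotonicity of $f$ this yields
\begin{equation*}
 \frac{f(r_-)\,p_-}{r_-} + \frac{f(r_+)\,p_+}{r_+} \geq f(r_-)\Bigl[\frac{p_-}{r_-} + \frac{p_+}{r_+}\Bigr] =: f(r_-)\, Q,
\end{equation*}
valid even in the ball around $c$ where $p_-$ turns negative, because the dropped piece is $(f(r_+) - f(r_-))\,p_+/r_+ \geq 0$. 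It therefore suffices to prove $Q \geq 2|\vec{y}|^2/(|x|+|c|)$.

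For this I would use the law of cosines $p_{\pm} = \tfrac12(|x|^2 + r_{\pm}^2 - |c|^2)$ to rewrite
\begin{equation*}
 Q = \frac{(r_- + r_+)\bigl(|x|^2 - |c|^2 + r_- r_+\bigr)}{2\, r_- r_+},
\end{equation*}
and then invoke the key algebraic identity (using $|c|^2 = c_1^2$, since $c = (c_1, \vec{0})$)
\begin{equation*}
 (r_- r_+)^2 = (|x|^2 + |c|^2)^2 - 4 c_1^2 x_1^2 = (|x|^2 - |c|^2)^2 + 4|\vec{y}|^2 |c|^2.
\end{equation*}
Factoring the difference of squares gives $(r_- r_+ + |x|^2 - |c|^2)(r_- r_+ + |c|^2 - |x|^2) = 4|\vec{y}|^2 |c|^2$, and the second factor is at most $2|c|^2$ since $r_- r_+ \leq |x|^2 + |c|^2$ (read off the identity). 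Hence $|x|^2 - |c|^2 + r_- r_+ \geq 2|\vec{y}|^2$. Combining with the elementary bound $r_- r_+ \leq (|x|+|c|)\min(r_-, r_+) \leq (|x|+|c|)(r_-+r_+)/2$ produces $Q \geq 2|\vec{y}|^2/(|x|+|c|)$, and the lemma follows.

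The genuine content of the argument is the identity for $(r_- r_+)^2$: it is what converts the orthogonality of $\vec{y}$ to the axis through $\pm c$ into a quantitative nonnegative bound, and it is indispensable in the region near $x = c$ where $p_-$ is negative and the naive estimate $Q \geq 0$ is too weak. Everything else is bookkeeping in the radial coordinates.
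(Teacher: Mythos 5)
Your proof is correct and structurally the same as the paper's: both factor out $\min\{f(r_-),f(r_+)\}$ and then establish the pointwise bound $p_-/r_- + p_+/r_+ \geq 2|\vec{y}|^2/(|x|+|c|)$, the paper via the law of cosines with the explicit angle between $x+c$ and $x-c$, and you via the equivalent algebraic factorization $(r_-r_+)^2 - (|x|^2-|c|^2)^2 = 4|\vec{y}|^2|c|^2$, which yields $|x|^2-|c|^2+r_-r_+ \geq 2|\vec{y}|^2$ without naming the angle. One small typo: $r_\pm = |x\mp c|$ should read $r_\pm = |x\pm c|$ (so that $r_-\le r_+$ really is $c_1x_1\ge 0$ and $p_+ = |x|^2 + c_1x_1$), but the rest of the argument makes the intended convention clear.
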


\begin{proof}
Without loss of generality, we can assume $c_1 \geq 0$. First we assume $x_1 \geq 0$, then similar result will follow for $x_1 \leq 0$.
\begin{equation}i[V_0(x), \gamma_{-c} + \gamma_{c}] = -2 V_0'(r) (f(r_1) \frac{x + c}{r_1} \cdot \frac{x}{r} + f(r_2) \frac{x - c}{r_2} \cdot \frac{x}{r})\end{equation}
Here $r = |x|$, $r_1 = |x + c|$ and $r_2 = |x - c|$. Since $V_0'(r) \leq 0$, we only have to prove $f(r_1) \frac{x + c}{r_1} \cdot \frac{x}{r} + f(r_2) \frac{x - c}{r_2} \cdot \frac{x}{r} \geq 0$. By assumption, $c_1 >0$ and $x_1 \geq 0$, we have
\begin{equation}\frac{x + c}{r_1} \cdot \frac{x}{r} = \frac{(x_1 + c_1) x_1 + |\vec{y}|^2}{r_1 r} \geq 0\end{equation}
\begin{equation} \label{eqn:monotone}
r_1 = \sqrt{(x_1+c_1)^2 +|\vec{y}|^2} \geq \sqrt{(x_1-c_1)^2 +|\vec{y}|^2} = r_2
\end{equation}
The later inequality (\ref{eqn:monotone}) implies that $f(r_1) \geq f(r_2) \geq 0$ by the monotonicity of function $f$. So we have
\begin{equation}i[V_0(x), \gamma_{-c} + \gamma_{c}] \geq -2 V_0'(r) f(r_2) (\frac{x + c}{r_1} \cdot \frac{x}{r} + \frac{x - c}{r_2} \cdot \frac{x}{r})\end{equation}
In order to prove that $i[V_0(x), \gamma_{-c} + \gamma_{c}] \geq 0$, we only need to prove
$\frac{x + c}{r_1} \cdot \frac{x}{r} + \frac{x - c}{r_2} \cdot \frac{x}{r} \geq 0$.

\begin{equation}\begin{split}
& \frac{x + c}{r_1} \cdot \frac{x}{r} + \frac{x - c}{r_2} \cdot \frac{x}{r} \\
=~& \frac{x_1^2 + |\vec{y}|^2 + c_1 x_1}{r r_1} + \frac{x_1^2 + |\vec{y}|^2 - c_1 x_1}{r r_2}\\
=~& \frac{|\vec{y}|^2}{r} (\frac{1}{r_1} + \frac{1}{r_2}) + \frac{x_1^2 (r_1 + r_2)}{r r_1 r_2} + \frac{c_1 x_1 (r_2 - r_1)}{r r_1 r_2} \\
=~& \frac{|\vec{y}|^2}{r} (\frac{1}{r_1} + \frac{1}{r_2}) + \frac{x_1^2 (r_1 + r_2)}{r r_1 r_2} + \frac{-4 c_1^2 x_1^2}{r r_1 r_2 (r_1 + r_2)} \\
=~& \frac{|\vec{y}|^2}{r} (\frac{1}{r_1} + \frac{1}{r_2}) + \frac{x_1^2 (r_1 + r_2)^2 - x_1^2 (r_1^2 + r_2^2 -2r_1 r_2 \cos(\theta))}{r r_1 r_2 (r_1 + r_2)} \\
=~& \frac{|\vec{y}|^2}{r} (\frac{1}{r_1} + \frac{1}{r_2}) + \frac{2 x_1^2 (1 + \cos (\theta))}{r (r_1 + r_2)} \\
\geq ~& \frac{|\vec{y}|^2}{r} (\frac{1}{r_1} + \frac{1}{r_2}) \geq \frac{2|\vec{y}|^2}{r} \frac{1}{r+|c|} \geq 0
\end{split}\end{equation}
Here we used:
\begin{equation}(r_2 - r_1)(r_1 + r_2) = r_2^2 - r_1^2 = (x_1 - c_1)^2 - (x_1 + c_1)^2 = -4 c_1 x_1;\end{equation}
and the cosine law:
\begin{equation}4 c_1^2 = r_1^2 + r_2^2 -2 r_1 r_2 \cos(\theta),\end{equation}
where $\theta$ is the angle between $x+c$ and $x-c$.

From the above computation, we get
\begin{equation}\begin{split}
& i[V_0(x), \gamma_{-c} + \gamma_{c}] \\
\geq & -2 V_0'(r) f(r_2) (\frac{x + c}{r_1} \cdot \frac{x}{r} + \frac{x - c}{r_2} \cdot \frac{x}{r}) \\
\geq & -2 V_0'(r) f(r_2) \frac{2|\vec{y}|^2}{r} \frac{1}{r+|c|} \geq 0
\end{split}\end{equation}

When $x_1 \leq 0$, with the same computation, we have
\begin{equation}\begin{split}
& i[V_0(x), \gamma_{-c} + \gamma_{c}] \\
\geq & -2 V_0'(r) f(r_1) \frac{2|\vec{y}|^2}{r} \frac{1}{r+|c|} \geq 0
\end{split}\end{equation}
Thus we proved the lemma.
\end{proof}

As one can see from the cancellation lemma,
\begin{equation}
 i[V_0(x), \gamma_{-c} + \gamma_{c}] \gtrsim |V_0'(r)| f(\infty) \frac{|\vec{y}|^2}{r} \frac{1}{r+|c|}
\end{equation}
as $|c| \to \infty$. If we take $\gamma_N = \sum_{k=-N}^{N} \gamma_{c_k}$, where $c_k = kb$,
then the gain from the cancellation lemma could be very large, as $\sum_{k=1}^{N} 1/(r+c_k) \sim \log (Nb)$.

\begin{cor} \label{cor:log-accumulate}
Assume $V_0(x) = V_0(|x|)$ is a real valued, radially decreasing $C^1$ function in $\mathbb{R}^n$, and $\Omega \subset B_R(0) \subset \mathbb{R}^n$ is a compact set.
Let $b \neq 0 \in \mathbb{R}^n $, then for any $\delta > 0$, there is a uniform estimate for
$\{x = (x_1, \vec{y}) \in \Omega: |\vec{y}| > \delta\}$:
\begin{equation}i[V_0(x), \gamma_N] \gtrsim |V_0'(|x|)| f(\infty) \frac{\delta^2}{R} \log(N|b|/R).\end{equation}
Especially, if $V_0(x)$ is compactly supported, we can take $\Omega = \supp V_0(x)$.
\end{cor}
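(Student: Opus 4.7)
The plan is to exploit the symmetry of $\gamma_N$ about the origin by pairing the centers $c_k = kb$ with $c_{-k} = -c_k$ and applying the Cancellation Lemma to each pair. Writing
\[
\gamma_N = \gamma_0 + \sum_{k=1}^N (\gamma_{c_k} + \gamma_{-c_k}),
\]
the isolated term satisfies $i[V_0, \gamma_0] = -2 V_0'(|x|)\, f(|x|) \geq 0$ by (C2) and the radial monotonicity of $V_0$, so it may be dropped from any lower bound. By rotating coordinates (which does not affect the radial $V_0$) I may assume $b$ points along $e_1$, and then the Cancellation Lemma applies to each pair with $c = c_k$, giving
\[
i[V_0, \gamma_{c_k} + \gamma_{-c_k}] \geq -2 V_0'(|x|)\, \min\{f(|x+c_k|), f(|x-c_k|)\}\, \frac{2|\vec{y}|^2}{|x|\,(|x| + |c_k|)}.
\]

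Next I would insert the geometric hypotheses. On the set $\Omega \cap \{|\vec{y}| > \delta\}$ one has $|x| \leq R$, hence $|\vec{y}|^2/|x| \geq \delta^2/R$ and $|x| + |c_k| \leq R + k|b|$. For any $k$ with $k|b| > R$, the reverse triangle inequality gives $\min\{|x+c_k|, |x-c_k|\} \geq k|b| - R$, and since $f' = g^2 > 0$ makes $f$ strictly increasing,
\[
\min\{f(|x+c_k|), f(|x-c_k|)\} \geq f(k|b| - R).
\]

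Finally, since $f(r)$ increases monotonically to its finite limit $f(\infty) = M_\sigma / \sqrt{a}$, I would fix an $N$-independent threshold $k_1$ (depending only on $|b|, R, \sigma, a$) such that $f(k|b| - R) \geq f(\infty)/2$ for all $k \geq k_1$, and discard the terms with $k < k_1$, each of which is already nonnegative. What remains is
\[
i[V_0, \gamma_N] \geq |V_0'(|x|)| \cdot \frac{4\delta^2}{R} \cdot \frac{f(\infty)}{2} \sum_{k = k_1}^{N} \frac{1}{R + k|b|},
\]
and an integral comparison bounds the sum below by $|b|^{-1} \log\bigl[(R + N|b|)/(R + k_1 |b|)\bigr]$, producing the advertised factor $\log(N|b|/R)$ for $N$ large, with $1/|b|$ absorbed into the implicit constant since $b$ is fixed. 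The main conceptual obstacle is minor: the naive bound $f(k|b| - R)$ degenerates near $k \approx R/|b|$, so one cannot sum from $k=1$ and still keep $f(\infty)$ as the prefactor; cutting off a fixed initial block of terms, harmless because those contributions have the correct sign, simultaneously restores $f(\infty)$ in the lower bound and preserves the logarithmic growth in $N$.
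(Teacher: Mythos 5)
Your proof is correct and follows essentially the same route as the paper: pair the centers $\pm c_k$, apply the Cancellation Lemma to each pair, keep the geometric factor $|\vec{y}|^2/(|x|(|x|+|c_k|)) \geq \delta^2/(R(R+k|b|))$, discard an initial block of (nonnegative) terms with $k$ below a threshold depending only on $R$, $|b|$, and $f$ so that $\min\{f(|x\pm c_k|)\} \gtrsim f(\infty)$, and sum the remaining harmonic-type tail to get the logarithm. You have merely made explicit several points the paper leaves implicit (the rotation aligning $b$ with $e_1$, the reverse-triangle-inequality justification for the cutoff $k_1$, and the integral comparison), so this is the same argument, written out in more detail.
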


\begin{proof}
Pair up the symmetric $\gamma_c$'s, and use the result of cancellation lemma, then for some $M$ depends on $R$ and $f$,
\begin{equation}
 \begin{split}
 i[V_0(x), \gamma_N] \geq & -2 V_0'(|x|) \sum_{k=1}^{N} \min\{f(|x+c_k|),f(|x-c_k|)\} \frac{2|\vec{y}|^2}{|x|} \frac{1}{|x|+|c_k|} \\
 \gtrsim & |V_0'(|x|)| f(\infty) \frac{\delta^2}{R} \sum_{k=M}^{N} \frac{1}{R+k|b|} \\
 \sim & |V_0'(|x|)| f(\infty) \frac{\delta^2}{R} \log(N|b|/R), \text{~as~} N \to \infty
 \end{split}
\end{equation}
\end{proof}

If the potential function has only one bump, then of course one can choose $\gamma_c$'s symmetric w.r.t. the origin.
However, for $H = -\Delta + V_{-1}(|x+b|) + V_1(|x-b|)$, it is impossible to choose $\gamma_c$'s symmetric w.r.t. both $x=-b$ and $x=b$ at the same time.
So one should choose $\gamma_c$'s as symmetric as possible; our choice here is $\gamma_N = \sum_{k=-N}^{N} \gamma_{c_k}$, where $c_k = kb$.
By the cancellation lemma, the only negative terms of $i[H,\gamma_N]$ come from $i[V_{-1}(|x+b|), \gamma_{c_{N-1}} + \gamma_{c_N}]$ and $i[V_1(|x-b|), \gamma_{c_{-(N-1)}} + \gamma_{c_{-N}}]$,
after combining symmetric terms.

Notice that
\begin{equation}|i[V_{-1}(|x+b|), \gamma_{c_{N-1}} + \gamma_{c_N}]| \leq 4 f(\infty) V_{-1}'(|x+b|)|\end{equation}
for any number $N$, and similar bound holds for $|i[V_1(|x-b|), \gamma_{c_{-(N-1)}}  + \gamma_{c_{-N}}]|$.
This means that the negative terms do not grow as $N$ increases, and that they are bounded by fixed functions.
Then Corollary \ref{cor:log-accumulate} enables us to shrink the support of negative part of $i[V_{-1} + V_1, \gamma_N]$ to a tube of radius $\delta$ by taking $N$ large enough.

So we have the following estimate of $i[V_j, \gamma_N]$:
\begin{prop}\label{prop:tube}
Assume $V_{-1}(|x+b|)$ and $V_1(|x-b|)$ are real valued radially decreasing smooth potentials of compact support, with centers at $x=-b$ and $x=b$ respectively.
For any integer $N$, and $j=-1, 1$, let $S^{(j)}_N = \{x: i[V_j(x),\gamma_N] < 0\}$, and $\chi^{(j)}_N (x) = \chi_{S^{(j)}_N} (x)$ be the characteristic function of $S^{(j)}_N$. Then
\begin{equation}i[V_j(|x-jb|), \gamma_N] \geq -4 f(\infty) |V_{j}'(|x - jb|)| \chi^{(j)}_N (x)\end{equation}
and $S^{(j)}_N \subset \{x=(x_1,\vec{y}) \in \mathbb{R}^n: |\vec{y}| \leq \delta_N\} \cap \supp V_j$ with $\delta_N \to 0$ as $N$ goes to infinity.
\end{prop}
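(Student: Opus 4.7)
By linearity, $i[V_j(|x-jb|), \gamma_N] = \sum_{k=-N}^{N} i[V_j(|x-jb|), \gamma_{c_k}]$, so the first task is to reorganize these $2N+1$ summands into symmetric pairs about the radial center $c_j = jb$ of $V_j$: the partner of the index $k$ is $2j-k$. For $j = 1$ this produces the singleton $k=1$, the $N-1$ pairs $(c_{1+m}, c_{1-m})$ with $m=1,\ldots,N-1$, and exactly two unpaired indices $k = -(N-1), -N$; the case $j=-1$ is the mirror image, with unpaired indices $k = N-1, N$, matching the formulas displayed just before the proposition.

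After translating to $x' = x - jb$, each pair reduces to a Cancellation Lemma configuration with $c = mb$ acting on the radially decreasing potential $V_j(|x'|)$, yielding a nonnegative contribution bounded below by $-2V_j'(|x'|) \min\{f(|x'+mb|), f(|x'-mb|)\}\cdot 2|\vec{y}|^2/(|x'|(|x'|+m|b|))$. The singleton $\gamma_{c_j}$ contributes $-2\nabla F_{c_j}\cdot \nabla V_j = -2 f(|x'|) V_j'(|x'|) \geq 0$ as well. The only possibly negative terms are the two unpaired $\gamma_{c_k}$; for each, the identity (C2) together with $|\nabla F_c| = f(|x-c|) \leq f(\infty)$ gives the $N$-independent bound $|i[V_j, \gamma_{c_k}]| \leq 2 f(\infty) |V_j'(|x-jb|)|$. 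Summing yields $i[V_j, \gamma_N] \geq -4 f(\infty)|V_j'(|x-jb|)|$ pointwise, and inserting $\chi^{(j)}_N$ on the right costs nothing since the bound is trivial where $i[V_j,\gamma_N]\geq 0$; this is the first asserted inequality.

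For the tube inclusion, pick $R$ with $\supp V_j \subset B_R(jb)$. Repeating the proof of Corollary \ref{cor:log-accumulate} in the shifted variable $x'$ on the $N-1$ paired contributions gives, on $\{|\vec{y}|>\delta\} \cap \supp V_j$, a lower bound of order $|V_j'(|x-jb|)|\, f(\infty)(\delta^2/R)\sum_{m=1}^{N-1}1/(R+m|b|) \sim |V_j'(|x-jb|)|\, f(\infty)(\delta^2/R)\log(N|b|/R)$, which tends to $+\infty$ as $N \to \infty$. The negative contribution from the two unpaired terms is bounded uniformly in $N$ by $4 f(\infty)|V_j'(|x-jb|)|$. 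Hence for each $\delta>0$ there is an $N_0(\delta)$ such that $i[V_j,\gamma_N]\geq 0$ on $\{|\vec{y}|>\delta\}\cap \supp V_j$ whenever $N\geq N_0(\delta)$; inverting this relation produces a sequence $\delta_N \downarrow 0$ with $S^{(j)}_N \subset \{|\vec{y}|\leq \delta_N\}\cap \supp V_j$, the inclusion into $\supp V_j$ being automatic because $i[V_j,\gamma_N]$ vanishes off $\supp V_j$.

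\textbf{Main obstacle.} The decisive structural fact is that no matter how large $N$ becomes, exactly two indices stay unpaired, and that these correspond to $c_k$'s sitting on the far side of the lattice from $jb$. This is what allows the logarithmically accumulating positive contribution from the Cancellation Lemma to swamp the $N$-independent negative contribution from the two leftover terms; once this counting is pinned down, both the uniform pointwise bound and the shrinking-tube conclusion follow in one stroke from the lemma and the monotonicity bound $f \leq f(\infty)$.
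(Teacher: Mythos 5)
Your proof is correct and follows essentially the same route as the paper's: decompose $\gamma_N$ into a symmetric sub-sum centered at $jb$ (nonnegative via the Cancellation Lemma and the singleton term), bound the two leftover far-side terms by $4f(\infty)|V_j'|$ using (C2) and $|\nabla F_c|\leq f(\infty)$, and invoke Corollary \ref{cor:log-accumulate} for the logarithmic gain that forces positivity outside a shrinking tube. The paper states the final pointwise bound more tersely, but the pairing scheme, the two-unpaired-index count, and the competition between the log accumulation and the $N$-independent remainder are identical to what you wrote.
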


\begin{proof}
We only prove it for $V_{-1}$. Use Corollary (\ref{cor:log-accumulate}), for any $\delta>0$, if $x\notin \mathbb{R} \times B_{\delta}(0)$ we have
\begin{equation}
\begin{split}
 i[V_{-1}(|x+b|), \gamma_N] =& i[V_{-1}(|x+b|), \sum_{k=-(N-1)}^{N-1} \gamma_{c_{k-1}}] + i[V_{-1}(|x+b|), \gamma_{c_{N-1}} + \gamma_{c_N}] \\
 \gtrsim & |V_{-1}'(|x+b|)| f(\infty) \frac{\delta^2}{R} \log(Nb/R) - 4 f(\infty) V_{-1}'(|x+b|)| \\
 \geq & 0, \text{~~for $N \gtrsim \exp(1/\delta^2)$ large enough.}
\end{split}
\end{equation}
So the region where $i[V_{-1}(|x+b|), \gamma_N]<0$ is confined within a tube of radius $\delta_N \sim 1/\sqrt{\log N}$,
and $i[V_{-1}(|x+b|), \gamma_N] \geq -4 f(\infty) |V_{-1}'(|x+b|)|$ on this region. Thus we proved the proposition.
\end{proof}

Next we prove that $i[V_j,\gamma_N]$ can be controlled by $i[-\Delta, \gamma_N]$, by using frequency decomposition.

\begin{thm} \label{thm:control}
Assume $V_{-1}(|x+b|)$ and $V_1(|x-b|)$ are real valued radially decreasing smooth potentials of compact support, with centers at $x=-b$ and $x=b$ respectively.
For any $\epsilon > 0$, there exists $N_{\epsilon}$ such that for any $N \geq N_{\epsilon}$, in the sense of forms on $D(-\Delta)$,
\begin{equation}\label{eqn:V_-1}
i[- \epsilon \Delta, \gamma_{-b}] +i[V_{-1}(|x+b|), \gamma_N] \geq -C\epsilon \frac{1}{\langle x+b \rangle^{\sigma}} \Delta \frac{1}{\langle x+b \rangle^{\sigma}} + i[V_{-1}(|x+b|),\gamma_N] >0,
\end{equation}
\begin{equation}\label{eqn:V_1}
i[- \epsilon \Delta, \gamma_{b}] +i[V_{1}(|x-b|), \gamma_N] \geq -C\epsilon \frac{1}{\langle x-b \rangle^{\sigma}} \Delta \frac{1}{\langle x-b \rangle^{\sigma}} + i[V_1(|x-b|),\gamma_N] >0.
\end{equation}
Here $C = 4-\sigma/(n-2)^2$, from Proposition \ref{prop:Delta}.
So, we have $i[-\epsilon \Delta,\gamma_N] + i[V_{-1}+V_1,\gamma_N] >0$.
\end{thm}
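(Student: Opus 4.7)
The plan is to prove inequality \eqref{eqn:V_-1}; the companion \eqref{eqn:V_1} follows by swapping $b \leftrightarrow -b$, and the closing assertion then follows by retaining only the $\gamma_{-b}$ and $\gamma_{b}$ summands of $i[-\epsilon\Delta,\gamma_N]=\sum_{k=-N}^{N}i[-\epsilon\Delta,\gamma_{c_k}]$ and discarding the remaining positive terms, which is legitimate because every $i[-\Delta,\gamma_{c_k}]$ is positive by Proposition \ref{prop:Delta} (using $0<\sigma<4(n-2)^2$). The two inputs are: Proposition \ref{prop:Delta} at the center $-b$, which gives, as a quadratic form on $D(-\Delta)$,
\begin{equation*}
i[-\epsilon\Delta,\gamma_{-b}]\,\geq\,\epsilon\,C\,(-W_{-b}\Delta W_{-b}),\qquad W_{-b}(x):=\langle x+b\rangle^{-\sigma},\qquad C:=4-\tfrac{\sigma}{(n-2)^2},
\end{equation*}
so that $\langle\psi,-W_{-b}\Delta W_{-b}\psi\rangle=\|\nabla(W_{-b}\psi)\|_{L^2}^2$ after one integration by parts; and Proposition \ref{prop:tube}, which confines the negative part of $i[V_{-1}(|x+b|),\gamma_N]$ to the thin tube $T_N:=\{x=(x_1,\vec{y}):|\vec{y}|\leq\delta_N\}\cap K_{-1}$ (where $K_{-1}$ is the compact support of $x\mapsto V_{-1}(|x+b|)$ and $\delta_N\to 0$) and majorizes it pointwise by $4f(\infty)|V_{-1}'(|x+b|)|\chi_{T_N}$. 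The theorem thereby reduces to the quadratic-form estimate
\begin{equation*}
4f(\infty)\int_{T_N}|V_{-1}'(|x+b|)|\,|\psi(x)|^2\,\ud x\;\leq\;\epsilon\,C\,\|\nabla(W_{-b}\psi)\|_{L^2}^2
\end{equation*}
for all $N$ sufficiently large, uniformly in $\psi$.

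To establish this reduced inequality I would change variables to $\tilde\psi:=W_{-b}\psi$, turning the right-hand side into $\epsilon C\|\nabla\tilde\psi\|_{L^2}^2$. On the compact set $K_{-1}$ the weight $W_{-b}$ is bounded below by some $c_0>0$, whence $|\psi|^2\leq c_0^{-2}|\tilde\psi|^2$ pointwise on $T_N\subset K_{-1}$. One application of H\"older (with exponents $n/(n-2)$ and $n/2$) followed by the critical Sobolev embedding $\dot{H}^1(\mathbb{R}^n)\hookrightarrow L^{2n/(n-2)}(\mathbb{R}^n)$ (available for $n\geq 3$) gives
\begin{equation*}
\int_{T_N}|\tilde\psi|^2\,\ud x\;\leq\;|T_N|^{2/n}\,\|\tilde\psi\|_{L^{2n/(n-2)}}^2\;\leq\;C_{\mathrm{Sob}}^2\,|T_N|^{2/n}\,\|\nabla\tilde\psi\|_{L^2}^2.
\end{equation*}
Because $T_N$ lies inside a $\delta_N$-neighborhood of a fixed line intersected with the bounded set $K_{-1}$, $|T_N|\lesssim\delta_N^{n-1}\to 0$, so choosing $N_\epsilon$ with $4f(\infty)\|V_{-1}'\|_\infty c_0^{-2}C_{\mathrm{Sob}}^2|T_N|^{2/n}\leq\epsilon C/2$ (say) absorbs all constants and yields strict positivity, since $\|\nabla(W_{-b}\psi)\|_{L^2}>0$ whenever $\psi\not\equiv 0$.

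The main obstacle is the tension between the vanishing measure of $T_N$ and the delocalized character of $\|\nabla(W_{-b}\psi)\|_{L^2}^2$: nothing forces $\psi$ to be small on $T_N$ a priori, so the smallness must come from measure-theoretic/functional-analytic grounds. The introduction suggests a Littlewood--Paley split — high frequencies controlled via $\|P_{\geq K}\psi\|_{L^2}^2\leq K^{-2}\|\nabla P_{\geq K}\psi\|_{L^2}^2$ with $K$ chosen from $\|V_{-1}'\|_\infty$, and low frequencies handled by Rellich-type compactness on the vanishing set $T_N$ — and that route also works. The H\"older--Sobolev approach above sidesteps any explicit frequency cutoff because the Sobolev norm handles both regimes simultaneously. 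The only small technical item to check in either variant is that $\psi\mapsto\tilde\psi=W_{-b}\psi$ is a bounded bijection on $H^1(\mathbb{R}^n)$, which is immediate from the uniform boundedness of $W_{-b}$ and $\nabla W_{-b}$, so that the form inequality genuinely transfers back to the original variable.
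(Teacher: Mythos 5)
Your proof is correct, and it takes a genuinely different — and arguably cleaner — route than the paper. Both versions reduce matters, via Propositions \ref{prop:Delta} and \ref{prop:tube}, to showing that the localized form $\int_{T_N}|V_{-1}'|\,|\psi|^2$ is absorbed by $\epsilon C\|\nabla(W_{-b}\psi)\|_{L^2}^2$ once the tube $T_N$ is thin enough. The paper does this by conjugating with $\langle x+b\rangle^{\sigma}$ and then splitting into high and low frequencies: the high-frequency part $P_K(-\epsilon\Delta-2V_{a,\sigma}^{(-1)}\chi_N^{(-1)})P_K$ is made positive by choosing $K$ proportional to $\|V_{a,\sigma}^{(-1)}\|_\infty/\epsilon$, while for the low-frequency part it invokes Hardy's inequality and then Lemma \ref{lemma:compact}, which shows that $\chi^{(-1)}Q_K|x|$ is a compact operator (via the commutation identity with $(x_j+i)^{-1}$) and that $\|\chi_N^{(-1)}Q_K|x|\|\to 0$ by strong convergence $\chi_N^{(-1)}\to 0$. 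Your argument replaces all of this machinery by a single application of H\"older followed by the scale-invariant Sobolev embedding $\dot H^1(\mathbb{R}^n)\hookrightarrow L^{2n/(n-2)}$, which automatically interpolates between frequency scales and converts the vanishing measure $|T_N|\lesssim\delta_N^{n-1}$ into a vanishing constant $|T_N|^{2/n}$. This is more elementary (no Littlewood--Paley cutoffs, no Hardy, no compact-operator argument), and also more quantitative out of the box: plugging in the paper's $\delta_N\sim(\log N)^{-1/2}$ immediately gives an explicit $N_\epsilon$. The one minor inaccuracy in your write-up is the closing claim that $\psi\mapsto W_{-b}\psi$ is a bijection on $H^1$ — it is bounded, but $W_{-b}^{-1}=\langle x+b\rangle^{\sigma}$ is unbounded, so the inverse is not $H^1$-bounded. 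Fortunately you never use bijectivity: your reduced inequality is already stated and proved directly in the original variable $\psi$, so the argument stands as is. Note also that the paper's frequency-decomposition proof is the one actually reused (with a Hilbert--Schmidt-norm bound in place of the compactness lemma) in the more general Theorem \ref{thm:general}; your H\"older--Sobolev route would work there equally well, and in fact more simply, since it only needs $|T_N|\to 0$ and not any finer structure of the tube.
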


\begin{proof}
To prove the estimate involving $V_{-1}$, we only have to prove that for any $\epsilon >0$,
\begin{equation}- C \epsilon  \Delta - 4 f(\infty) \langle x+b \rangle^{\sigma} |\frac{\partial V_{-1}(|x+b|)}{\partial r}| \langle x+b \rangle^{\sigma} \chi^{(-1)}_N (x) > 0\end{equation}
for $N$ large enough.

For $j=-1, 1$, let us write
\begin{equation}
V_{a,\sigma}^{(j)}(x) = \frac{4}{C} f(\infty) \langle x-jb \rangle^{\sigma} |V_{j}'(|x-jb|)| \langle x-jb \rangle^{\sigma} \geq 0,
\end{equation}
and take $K = K_{\epsilon} = 2 \max_{j=-1,1} \{\max_{x\in\mathbb{R}^n} V_{a,\sigma}^{(j)}(x) \}/\epsilon$. Then we only need to prove
\begin{equation}
 -\epsilon \Delta - V_{a,\sigma}^{(-1)}(x) \chi^{(-1)}_N(x) \geq 0.
\end{equation}

We fix $I(\lambda)$ to be a smoothed characteristic function of $[0,\infty)$,
and fix $P(\lambda)$ be a smoothed characteristic function of $[3,\infty)$,
such that $0\leq I'(\lambda), P'(\lambda) \leq 1$ and $\supp P'(\lambda) \subset [1,3]$.
We take $P_K(\lambda) = P(\lambda/K)$ and $Q_K(\lambda) = I(\lambda/K) - P(\lambda/K)$.
Then by the spectral theorem, $P_K (-\Delta) + Q_K(-\Delta) =I(-\Delta/K) = \Id$, since $-\Delta$ is a positive operator and $I(x/K) = 1$ on $[0,\infty)$.
In the following context, for convenience, we will use $P_K$ and $Q_K$ to stand for $P_K(-\Delta)$ and $Q_K(-\Delta)$ respectively.
Thus we have
\begin{equation}\begin{split}
& -\epsilon \Delta - V_{a,\sigma}^{(-1)}(x) \chi^{(-1)}_N(x) \\
=& \epsilon (P_K + Q_K) (-\Delta) (P_K + Q_K) \\
& - (P_K + Q_K) V_{a,\sigma}^{(-1)}(x) \chi^{(-1)}_N(x) (P_K + Q_K)\\
=& \epsilon P_K (-\Delta) P_K + \epsilon Q_K (-\Delta) Q_K + \epsilon P_K (-\Delta) Q_K + \epsilon Q_K (-\Delta) P_K\\
& - P_K V_{a,\sigma}^{(-1)}(x) \chi^{(-1)}_N (x) P_K - Q_K V_{a,\sigma}^{(-1)}(x) \chi^{(-1)}_N(x) Q_K \\
& - P_K V_{a,\sigma}^{(-1)}(x) \chi^{(-1)}_N (x) Q_K - Q_K V_{a,\sigma}^{(-1)}(x) \chi^{(-1)}_N(x) P_K \\
\geq & \epsilon P_K (-\Delta) P_K + \epsilon Q_K (-\Delta) Q_K\\
& - 2P_K V_{a,\sigma}^{(-1)}(x) \chi^{(-1)}_N (x) P_K - 2Q_K V_{a,\sigma}^{(-1)}(x) \chi^{(-1)}_N(x) Q_K \\
= & P_K (-\epsilon \Delta-2V_{a,\sigma}^{(-1)}(x) \chi^{(-1)}_N(x)) P_K + Q_K (-\epsilon \Delta-2V_{a,\sigma}^{(-1)}(x) \chi^{(-1)}_N(x)) Q_K.
\end{split}\end{equation}
Here we used the Cauchy-Schwartz inequality and the fact that $P_K(\lambda) \lambda Q_K(\lambda) = Q_K(\lambda) \lambda P_K(\lambda) \geq 0$.

For the high frequency part,
\begin{equation}P_K (-\epsilon \Delta-2V_{a,\sigma}^{(-1)}(x) \chi^{(-1)}_N(x)) P_K \geq P_K (K\epsilon -2V_{a,\sigma}^{(-1)}(x) \chi^{(-1)}_N(x)) P_K \geq 0\end{equation}
by our choice of $K = 2 \max_{j=-1,1} \{\max_{x\in\mathbb{R}^n} V_{a,\sigma}^{(j)}(x) \}/\epsilon$.

For the low frequency part, we can apply the Hardy's inequality and get
\begin{equation}
\begin{split}
& Q_K (-\epsilon \Delta - 2 V_{a,\sigma}^{(-1)}(x) \chi^{(-1)}_N(x)) Q_K \\
=& Q_K (-\epsilon \Delta - 2 \widetilde{Q}_K V_{a,\sigma}^{(-1)}(x) \chi^{(-1)}_N(x) \widetilde{Q}_K) Q_K \\
\geq & Q_K (\epsilon \frac{(n-2)^2}{4|x|^2} -2 \widetilde{Q}_K V_{a,\sigma}^{(-1)}(x) \chi^{(-1)}_N(x) \widetilde{Q}_K) Q_K \\
=& Q_K \frac{1}{|x|} (\epsilon \frac{(n-2)^2}{4} -2 |x| \widetilde{Q}_K \chi^{(-1)}_N(x) V_{a,\sigma}^{(-1)}(x) \chi^{(-1)}_N(x) \widetilde{Q}_K |x|) \frac{1}{|x|} Q_K.
\end{split}
\end{equation}
Here we choose $\widetilde{Q}_K = Q_{3K}$, so $\widetilde{Q}_K Q_K = Q_K$.

To prove that the low frequency part is also positive, we need the following lemma:
\begin{lemma} \label{lemma:compact}
For fixed $K$, and $Q_K$ (or $\widetilde{Q}_K$) as above,
\begin{equation}
\| \chi^{(-1)}_N(x) Q_K|x| \| \to 0, \text{~as~} N \to \infty
\end{equation}
\end{lemma}

\begin{proof}[Proof of Lemma:]
First we prove that $\chi^{(-1)}(x) Q_K|x|$ is a compact operator; here $\chi^{(-1)} (x)$ is the characteristic function of $\supp V_{-1} (|x+b|)$.
We can either compute the integral kernel of this operator or use the commutation technique; here we will use the commutation technique.
We only have to prove $\| \chi^{(-1)}_N(x) Q_K|x_j| \| \to 0, \text{~as~} N \to \infty$,
because of the fact that $|x| = \frac{|x|}{\sum_j |x_j|} \sum_j |x_j|$ and that $\frac{|x|}{\sum_j |x_j|} \leq 1$.

\begin{equation}
\begin{split}
\chi^{(-1)}(x) Q_K|x_j| &= \chi^{(-1)}(x)(x_j + i) \frac{1}{x_j + i} Q_K |x_j| \\
 &= \chi^{(-1)}(x)(x_j +i) Q_K \frac{1}{x_j +i} |x_j| + \chi^{(-1)}(x)(x_j +i) [\frac{1}{x_j +i}, Q_K] |x_j| \\
&= \chi^{(-1)}(x)(x_j +i) Q_K \frac{|x_j|}{x_j +i} - \chi^{(-1)}(x) [x_j, Q_K] \frac{|x_j|}{x_j +i} \\
&= \chi^{(-1)}(x)(x_j +i) Q_K \frac{|x_j|}{x_j +i} - i \chi^{(-1)}(x) Q_K' 2p_j \frac{|x_j|}{x_j +i},
\end{split}
\end{equation}
where $p_j \equiv -i \partial_j$ are the momentum operators. We see that both terms are compact operators, so $\chi^{(-1)}(x) Q_K|x_j|$ is compact.
And we have $\chi^{(-1)}_N(x)$ goes to $0$ strongly, so 
\begin{equation}
\| \chi^{(-1)}_N(x) Q_K|x_j| \| = \| \chi^{(-1)}_N(x) \chi^{(-1)}(x) Q_K|x_j| \| \to 0,
\end{equation}
as $N$ goes to infinity. Thus the lemma is proved.
\end{proof}
With the help of the lemma, we get $Q_K (-\epsilon \Delta - 2 V_{a,\sigma}^{(-1)}(x) \chi^{(-1)}_N(x)) Q_K$ is positive for $N$ large enough.

So we proved the equation (\ref{eqn:V_-1}), and similarly one can prove equation (\ref{eqn:V_1}).
\end{proof}

Then Proposition \ref{prop:Delta} and Theorem \ref{thm:control} together imply Theorem \ref{thm:two-bump}.

\section{One Dimensional Lattice Potential}

Using the same method, we prove similar result when the potential consists of bump functions centered at one dimensional lattice points.
Without loss of generality, we can assume that $H= -\Delta + \sum_{j=-M}^{M} V_j(|x-b_j|)$, where $b_j = (j,\vec{0}) \in \mathbb{R}^n$, $j=-M,\dots,M$.
Assume $V_{j}(|x- b_j|)$ are real valued radially decreasing smooth potentials of compact support, with centers at $x=b_j$.
And we define $\gamma_N = \sum_{k= -N}^{N} \gamma_{c_k}$, with $c_k = (k,\vec{0}) \in \mathbb{R}^n$, $k = -N,\dots,N$. Then we have the following theorem:

\begin{thm}\label{thm:latice}
For $H= -\Delta + \sum_{j=-M}^{M} V_j(|x-b_j|)$, where $b_j = (j,\vec{0}) \in \mathbb{R}^n$, $j=-M,\dots,M$.
Assume $V_{j}(|x- b_j|)$ are real valued radially decreasing smooth potentials of compact support, with centers at $x=b_j$.
If $1/2 < \sigma < 4(n-2)^2$, 
then for any $0<\epsilon<1$, there exists $N_{\epsilon} \geq M$, such that for any $N \geq N_{\epsilon}$,
\begin{equation}
i[H,\gamma_N] \geq (1 -\epsilon) i[-\Delta, \gamma_N].
\end{equation}
\end{thm}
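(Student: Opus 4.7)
The plan is to mirror the two-bump argument of Theorem \ref{thm:two-bump} essentially verbatim, treating the finitely many bumps $V_j(|x-b_j|)$ one at a time and paying careful attention to the fact that the natural symmetry center of $\gamma_N$ is the origin, not $b_j$.

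Fix $j$ with $|j| \leq M$. I would decompose $\gamma_N = \sum_{k=-N}^{N} \gamma_{c_k}$ into three pieces relative to the center $b_j$: the diagonal term $\gamma_{c_j}=\gamma_{b_j}$, the symmetric pairs $\gamma_{c_{j-l}}+\gamma_{c_{j+l}}$ for $l=1,\dots,N-|j|$ (both indices still lying in $\{-N,\dots,N\}$), and the $2|j|\le 2M$ unpaired boundary indices on the far side of $b_j$. The diagonal term is automatically nonnegative, since $V_j$ is radially decreasing gives $i[V_j(|x-b_j|),\gamma_{b_j}] = -2 f(|x-b_j|)V_j'(|x-b_j|) \geq 0$. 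For each symmetric pair, I apply the cancellation lemma after the translation $x\mapsto x-b_j$ (the hypotheses transport because $V_j$ is radial about $b_j$ and the pair is symmetric about $b_j$), obtaining a nonnegative contribution. Finally, for each of the at most $2M$ unpaired boundary terms $\gamma_{c_k}$, the pointwise bound
\begin{equation}
|i[V_j(|x-b_j|),\gamma_{c_k}]| \leq 2f(\infty)|V_j'(|x-b_j|)|
\end{equation}
holds uniformly in $N$ and $k$.

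Summing the paired contributions, the translated version of Corollary \ref{cor:log-accumulate} yields a logarithmic-in-$N$ lower bound on any set $\{x=(x_1,\vec{y})\in\supp V_j(|x-b_j|):|\vec{y}|>\delta\}$, which dwarfs the fixed $\mathcal{O}(M)$ boundary loss once $N\gtrsim \exp(C_M/\delta^2)$. This gives the lattice analogue of Proposition \ref{prop:tube}: there exist $\delta_N^{(j)}\to 0$ and characteristic functions $\chi_N^{(j)}$ supported in $\{|\vec y|\le \delta_N^{(j)}\}\cap\supp V_j(|x-b_j|)$ such that
\begin{equation}
i[V_j(|x-b_j|),\gamma_N]\geq -C_M f(\infty)|V_j'(|x-b_j|)|\,\chi_N^{(j)}(x).
\end{equation}

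Next I invoke the frequency decomposition of Theorem \ref{thm:control} separately for each $j$: choosing $K=K_{\epsilon,M}$ large enough to dominate $\max_j\max_x V_{a,\sigma}^{(j)}(x)$, splitting $\Id=P_K+Q_K$, using Cauchy-Schwarz for the cross terms, and absorbing the high-frequency piece trivially. For the low-frequency piece, Hardy's inequality reduces matters to the compactness statement of Lemma \ref{lemma:compact}, whose proof goes through verbatim with $\chi^{(-1)}$ replaced by the characteristic function of $\supp V_j(|x-b_j|)$. The result is that the $j$-th diagonal piece of $i[-\epsilon' \Delta,\gamma_N]$ (namely $i[-\epsilon'\Delta,\gamma_{b_j}]$, which is dominated by $i[-\Delta,\gamma_N]$ via Proposition \ref{prop:Delta} since the other diagonal terms are nonnegative) absorbs $i[V_j,\gamma_N]$ for $N$ large enough. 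Taking $\epsilon'=\epsilon/(2M+1)$ and summing over the $2M+1$ values of $j$ yields $i[H,\gamma_N] \geq (1-\epsilon) i[-\Delta,\gamma_N]$.

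The main technical obstacle is bookkeeping the shifted cancellation: the pairs symmetric about $b_j$ are not centered at the origin, so one must verify that Corollary \ref{cor:log-accumulate} survives translation with constants depending only on $M$ and on $\supp V_j$, and that the $\mathcal{O}(M)$ unpaired boundary contributions remain uniformly bounded as $N\to\infty$ so that the logarithmic gain in the interior still wins. Once this is in place, the frequency/compactness machinery and the final summation over $j$ are structurally identical to the two-bump case.
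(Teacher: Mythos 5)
Your proposal is correct and follows essentially the same route as the paper, which treats this theorem by reducing it to the two-bump argument: pair $\gamma_{c_k}$'s symmetrically about each center $b_j$, bound the $\mathcal{O}(M)$ unpaired boundary terms uniformly in $N$, use the logarithmic accumulation from Corollary \ref{cor:log-accumulate} to shrink the negative region to a thin tube, and then absorb it via frequency decomposition and the compactness Lemma \ref{lemma:compact} applied with $\chi^{(j)}$ in place of $\chi^{(-1)}$, using $i[-\epsilon\Delta,\gamma_{c_j}]$ for each $j$. The only superfluous step is the choice $\epsilon'=\epsilon/(2M+1)$: since the $\gamma_{b_j}$ are distinct summands of $\gamma_N$ and each $i[-\Delta,\gamma_{c_k}]\geq0$, one already has $\sum_{j=-M}^{M} i[-\Delta,\gamma_{b_j}]\leq i[-\Delta,\gamma_N]$, so $\epsilon'=\epsilon$ suffices directly (this is exactly how the paper phrases it); your smaller $\epsilon'$ is harmless but needlessly forces $N_\epsilon$ larger.
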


The proof is the same as the two-bump potential except for a few points.

First, for fixed $\epsilon >0$, the cut off of energy (frequency) should be
\begin{equation}
 K = 2 \max_{j=-M,\dots,M} \{\max_{x\in\mathbb{R}^n} V_{a,\sigma}^{(j)}(x) \}/\epsilon.
\end{equation}
Here the functions $V_{a,\sigma}^{(j)}(x)$ are defined as
\begin{equation}
V_{a,\sigma}^{(j)}(x) = 2 M f(\infty) \langle x-b_j \rangle^{\sigma} |V_j'(|x-b_j|)| \langle x-b_j \rangle^{\sigma} \geq 0
\end{equation}
If all the potential functions are of uniform shape, then the cut off is the same as the two-bump case.

Second, for each $i[V_j, \gamma_N]$, we may have $2M$ possibly negative terms, instead of only two terms in the two-bump case, after combining symmetric $\gamma$'s using the cancellation lemma. And of course, we have $M$ such $V$'s. This affect the result in two ways:
\begin{enumerate}
\item The rate of convergence for $\delta_N \to 0$ became slower,
which eventually slows the rate of convergence for $\| \chi^{(j)}_N(x) Q_K|x| \| \to 0$ as $N \to \infty$.
But we still have $i[-\epsilon \Delta, \gamma_{c_j}] + i[V_j,\gamma_N]>0$, by  using a larger value of $N$.
\item We need to control the commutators of $\gamma_N$ with $M$ bump functions instead of 2.
But remember in the two-bump potential case, we only used $i[-\epsilon \Delta, \gamma_{-b} + \gamma_{b}]$.
If we utilize all the terms in $i[-\epsilon \Delta, \gamma_N]$ that are localized at the position of the bumps,
we still have
\begin{equation}
 i[-\epsilon\Delta,\gamma_N] + \sum_{j=-M}^{M} i[V_j,\gamma_N] \geq \sum_{j=-M}^M i[-\epsilon \Delta, \gamma_{c_j}] + i[V_j,\gamma_N] >0.
 \end{equation}
\end{enumerate}
So we still have the same result, but with a larger value of $N$ (but still finite).

\section{Axially Repulsive Potentials}

As one can see, in the proof we essentially used the fact that the potential funtion $V(x)$ is repelling in all directions except one,
say $x_1$ direction.
Also we need the repelling force to be strong enough outside a tube near 0, 
so that we can shrink the region where $i[V(x), \gamma_N] < 0$ to a tube as small as we want by increasing $N$.
Then we prove the same result for a larger class of $V(x)$.

In this section, we will often use the notation $x = (x_1, \vec{y}) \in \mathbb{R}^n$, with $\vec{y} \in \mathbb{R}^{n-1}$.

We assume that the potential function $V(x)$ satisfies the following properties:
\begin{enumerate}
\item[(A1)] $V(x)$ is $C^1$ and non-negative.

\item[(A2)] $V(x)$ is axially repulsive:
\begin{enumerate}
 \item
$V(x)$ is axially repulsive w.r.t. the $x_1$-axis, that is, for any $x = (x_1, \vec{y}) \in \mathbb{R} \times \mathbb{R}^{n-1}$,
$$- \vec{y} \cdot \nabla_{\vec{y}} V(x) \geq 0.$$
\item For $x \in \{x = (x_1, \vec{y}): |x_1|>L \}$, $V(x)$ is also repulsive in the $x_1$ direction , that is
$$
- x_1 \cdot \partial_{x_1 }V(x) \geq 0, \text{~and~} - \vec{y} \cdot \nabla_{\vec{y}} V(x) \geq 0.
$$
\end{enumerate}

\item[(A3)] $|\partial_{x_1} V|$ can be controlled by $|\nabla_{\vec{y}} V(x) \cdot \frac{\vec{y}}{|\vec{y}|}|$, 
in the region $x \in [-L,L] \times \mathbb{R}^{n-1}$:
for any $0< \delta <L$, there exists $\Lambda_{\delta}>0$ such that for any $x \in [-L, L] \times (\mathbb{R}^{n-1}/B^{n-1}_0(\delta))$
$$|\frac{\partial V(x)}{\partial x_1}| \leq \Lambda_{\delta} |\nabla_{\vec{y}} V(x) \cdot \frac{\vec{y}}{|\vec{y}|}|.$$
Here, $B_0^{n-1}(R)$ is the $(n-1)$-dimensional ball centered at 0 with radius $R$.
\end{enumerate}

For the applications, sometimes we need a slightly stronger condition than (A2-b):
\begin{enumerate}
\item[(A4)] For $x \in \{x = (x_1, \dots, x_n): |x_1|>L \}$, $V(x)$ is repulsive in every $x_j$ direction , that is
$$
- x_j \cdot \partial_{x_j }V(x) \geq 0,
$$
for $j = 1, \dots, n$.
\end{enumerate}

\begin{remark}
We can choose $\Lambda_{\delta}$ to be nonincreasing in $\delta$.
Normally, $\Lambda_{\delta} \to \infty$ as $\delta$ goes to 0, 
and we fix $\Lambda_{\delta} = \Lambda_R$ when $\delta>R$ for some large $R$.
\end{remark}

\begin{example}\label{example}
The following two examples satisfy the properties (A1-A4).
\begin{enumerate}
 \item In the one dimensional lattice case, $V(x) = \sum_{j=-M}^{M} V_j(|x-b_j|)$,
 with $b_j = (j,\vec{0})$ and $V_j(r)\geq 0$ is decreasing and compactly supported smooth function.
 Then we can take $L=M$, and $\Lambda_{\delta} = \max \{1, M/\delta\}$.
 \item $V(x) = X(x_1) e^{-|\vec{y}|}$, with $x = (x_1, \vec{y}) \in \mathbb{R} \times \mathbb{R}^{n-1}$. $X(x_1)$ is non-negative and smooth function.
 Suppose $X'(x_1) \leq K X(x_1)$ when $x_1 \in [-L,L]$, and $X(x_1)$ is repulsive when $x_1 \notin [-L, L]$.
 Then we can take $\Lambda_{\delta} = \max \{K, KL/\delta\}$.
\end{enumerate}
\end{example}

\begin{remark}
 It should be noted that if $V(x) = X(x_1) |y|^{-(3+\epsilon)}$, with $X(x_1)$ as in the Example \ref{example}, 
 then $V(x)$ does not satisfy the condition (A3).
 But one can still prove that $i[-\Delta + V(x), \gamma_N]$ will be a positive operator for $N$ large enough.
 This is because one can view $V(x) = V_1(x) + V_2(x)$, with $V_1(x)$ satisfies the properties (A1-A4),
 while $V_2(x)$ is a perturbation that can be controlled by the Laplacian term.
\end{remark}

Then with such $V(x)$, we prove the same result as the one dimensional lattice case.

\begin{thm} \label{thm:general}
Let $H= -\Delta + V(x)$, with $V(x)$ satisfies the properties (A1-A3).
Define the multiplier $\gamma_N = \sum_{k=-N}^{N} \gamma_{c_k}$, where $c_k = k = (k,\vec{0}) \in \mathbb{R}^n$.
If $1/2 < \sigma <4(n-2)^2$,
then for any $0<\epsilon<1$, there exists $N_{\epsilon}$, such that for any $N \geq N_{\epsilon}$,
\begin{equation}
\begin{split}
i[H,\gamma_N] &\geq (1 -\epsilon) i[-\Delta, \gamma_N] \\
&\geq -(1- \epsilon)(4 - \frac{\sigma}{(n-2)^2}) \sum_{k=-N}^N \frac{1}{\langle x- k \rangle^{\sigma}} \Delta \frac{1}{\langle x- k \rangle^{\sigma}}.
\end{split}
\end{equation}
\end{thm}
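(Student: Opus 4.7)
The plan is to follow the blueprint of Theorem \ref{thm:two-bump} and Theorem \ref{thm:latice}, with the bump-wise cancellation lemma replaced by a direct cancellation in the axial direction that exploits hypothesis (A2-a). Since all centers $c_k=(k,\vec 0)$ lie on the $x_1$-axis and $\gamma_N$ is invariant under $x_1\mapsto -x_1$, pairing $\gamma_{c_k}+\gamma_{c_{-k}}$ produces a contribution whose $\vec y$-component adds up while whose $x_1$-component cancels -- precisely the configuration in which the axial repulsiveness of $V$ can be put to work, even though $V$ is no longer a sum of radial bumps.

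Using (C2), a direct computation with $x=(x_1,\vec y)$, $r_1=|x+(k,\vec 0)|$, $r_2=|x-(k,\vec 0)|$ gives
\begin{equation}
i[V,\gamma_{c_k}+\gamma_{c_{-k}}] = -2\Bigl(\tfrac{f(r_1)}{r_1}+\tfrac{f(r_2)}{r_2}\Bigr)\vec y\cdot\nabla_{\vec y}V \;-\; 2A_k(x)\,\partial_{x_1}V,
\end{equation}
where $A_k(x)=f(r_1)(x_1+k)/r_1+f(r_2)(x_1-k)/r_2$. The $\vec y$-term is nonnegative by (A2-a). A symmetry check using $x\mapsto(-x_1,\vec y)$ gives $\mathrm{sign}(A_k)=\mathrm{sign}(x_1)$, so combined with (A2-b) the $x_1$-term is also nonnegative on $\{|x_1|>L\}$. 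On $\{|x_1|\leq L\}$ I instead control the size of $A_k$ via the decomposition $A_k=(f(r_1)-f(r_2))(x_1+k)/r_1+f(r_2)\bigl[(x_1+k)/r_1+(x_1-k)/r_2\bigr]$ together with the cancellation-lemma identity $(x_1+k)/r_1+(x_1-k)/r_2=x_1[(r_1+r_2)^2-4k^2]/[r_1r_2(r_1+r_2)]$; the two pieces decay like $k^{-2\sigma}$ and $k^{-3}$ respectively, so $\sum_{k=1}^\infty|A_k(x)|$ is finite uniformly on bounded sets.

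Next, on $\{|x_1|\leq L,\;|\vec y|>\delta\}$ within a fixed ball $|x|\leq R$, the $\vec y$-term accumulates as in Corollary \ref{cor:log-accumulate}, yielding the lower bound $\gtrsim \log N\cdot \delta R^{-1}|\nabla_{\vec y}V\cdot\hat{\vec y}|$, while the $x_1$-contribution is bounded in magnitude by $2C|\partial_{x_1}V|\leq 2C\Lambda_\delta|\nabla_{\vec y}V\cdot\hat{\vec y}|$ by (A3). Hence for $N\geq\exp(CR\Lambda_\delta/\delta)$ the positive term wins, and, exactly as in Proposition \ref{prop:tube}, the negative set $S_N=\{i[V,\gamma_N]<0\}$ is contained in the shrinking tube $\{|x_1|\leq L,\;|\vec y|\leq\delta_N\}$ with $\delta_N\to 0$; on $S_N$ one has $i[V,\gamma_N]\geq -2C|\partial_{x_1}V|\chi_{S_N}$, with $|\partial_{x_1}V|$ bounded on the fixed compact tube $\{|x_1|\leq L,|\vec y|\leq 1\}$ by (A1).

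With the negative part localized in a fixed bounded region, the frequency-decomposition argument of Theorem \ref{thm:control} applies verbatim: choose $K_\epsilon$ so that $K_\epsilon\epsilon$ exceeds $2\sup_x V_{a,\sigma}(x)$ with $V_{a,\sigma}:=(4/C)f(\infty)\langle x\rangle^\sigma|\partial_{x_1}V|\langle x\rangle^\sigma\chi_{S_N}$, split $\mathrm{Id}=P_K+Q_K$, apply Cauchy--Schwarz to the cross terms, absorb the high-frequency piece by $K_\epsilon\epsilon$, and treat the low-frequency piece via Hardy's inequality together with Lemma \ref{lemma:compact} (which applies since $\chi_{S_N}$ is supported in a fixed bounded set, so the proof that $\chi_{S_N}Q_K|x|$ is compact and that $\chi_{S_N}\to 0$ strongly carries over unchanged). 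Summing the $-\epsilon\Delta$ contributions via Proposition \ref{prop:Delta} yields the claimed bound. The main obstacle is proving the uniform summability $\sum_k|A_k|<\infty$ on bounded sets, which replaces the boundary-term counting of the lattice proof; once this is in hand, the absence of compact support for $V$ is immaterial because (A2-b) and (A3) force $S_N$ into a fixed bounded tube.
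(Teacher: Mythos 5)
Your proposal takes a genuinely different route for the central cancellation step. The paper's proof of Lemma \ref{lemma:general-cancellation} uses an \emph{adaptive} pairing of the $\gamma_{c_k}$'s, chosen differently depending on the sign of $\partial_{x_1}V$ at the point $x$: each adaptively chosen pair is nonnegative by Claim \ref{claim:cancellation}, and at most $2\lceil L\rceil+1$ boundary terms survive, each bounded by $f(\infty)$. You instead use the \emph{fixed} symmetric pairing $\gamma_{c_k}+\gamma_{c_{-k}}$ and replace the counting argument with absolute convergence of $\sum_k A_k$, split into a $k^{-2\sigma}$ piece (from $f(r_1)-f(r_2)$) and a $k^{-3}$ piece (from the algebraic identity for $(x_1+k)/r_1+(x_1-k)/r_2$). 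This is a legitimate alternative: the paper's version gives the clean explicit constant $(2L+3)f(\infty)$, while yours gives a constant expressed as a convergent series; both produce a bound on part $I$ that is uniform in $N$, which is all the subsequent frequency decomposition needs.

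There are, however, two gaps you should close. First, your claim that $\sum_k|A_k|$ is ``finite uniformly on bounded sets'' is not strong enough: the proof requires uniformity on the entire infinite tube $\{|x_1|\le L\}\times\mathbb{R}^{n-1}$, since (A1)--(A3) do not assume compact support of $V$. With your decay rates as stated, the implicit constant in the $k^{-3}$ piece scales like $|\vec y|^2$, so the naive sum $\sum_k|\vec y|^2 k^{-3}$ grows like $|\vec y|^2$. The bound is in fact uniform in $|\vec y|$, but one needs the crossover estimate $|(x_1+k)/r_1+(x_1-k)/r_2|\lesssim\min\{L|\vec y|^2/k^3,\,L/|\vec y|\}$ and to split the sum at $k\approx|\vec y|$; this gives $\sum_k|A_k|\lesssim L$ independently of $\vec y$, which you should state explicitly. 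Second, your tube-confinement argument for $S_N$ only addresses $\{|x_1|\le L,\ \delta<|\vec y|\}\cap\{|x|\le R\}$; the accumulation $\sum_k\tfrac{f(|x-k|)}{|x-k|}\gtrsim\log N$ fails when $|\vec y|\gtrsim N$ (the sum saturates), so the regime $|\vec y|>R$ needs the paper's separate argument: for $|\vec y|>R$ and $|k|<R-L$ one has $|\vec y|/|x-k|\ge 1/\sqrt2$, giving a $\vec y$-contribution of size $\gtrsim R f(\infty)|\nabla_{\vec y}V\cdot\hat{\vec y}|$ which dominates the $\Lambda_R$-controlled $x_1$-term once $R$ is chosen large relative to $\Lambda_R L$. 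With these two points supplied, your proof of the tube confinement goes through, and the remainder (boundedness of the residual, frequency split via $P_K,Q_K$, Hardy plus the compactness Lemma \ref{lemma:compact}, then summing the $-\epsilon\Delta$ pieces via Proposition \ref{prop:Delta}) matches the paper's Theorem \ref{thm:control} argument, as you say.
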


To prove Theorem \ref{thm:general}, we will need a more general cancellation lemma than the one we used for two-bump potential,
because $V(x)$ does not have radial symmetry now.

\begin{lemma}[General Cancellation Lemma] \label{lemma:general-cancellation}
Let $V, \gamma_N$ as above, and define
$$S_V^+ = \{x=(x_1,\vec{y}): -\vec{y}\cdot \nabla_{\vec{y}} V \geq 0 \text{~and~} -x_1 \cdot \partial_{x_1} V \geq 0 \}$$
be the region where $V(x)$ is repulsive both in the $x_1$ and $\vec{y}$ direction, then we have
\begin{enumerate}
\item If $x\in S_V^+$, then $i[V(x), \gamma_{-c} + \gamma_c] \geq 0$ for any $c = (c_1,\vec{0}) \in \mathbb{R}^n$.
\item For any $N$ and any $x \in \mathbb{R}^{n}$,
\begin{equation}
i[V(x), \gamma_N] \geq - |\partial_{x_1} V(x)| (2L+3) f(\infty) -2 \nabla_{\vec{y}} V(x) \cdot \vec{y} \sum_{k=-N}^{N} \frac{f(|x-k|)}{|x-k|}. 
\end{equation}
\item For any $\delta$, there exists $N_{\delta}$ such that for all $N\geq N_{\delta}$,
we have 
\begin{equation}
i[V(x), \gamma_N] \geq - |\partial_{x_1} V(x)| (2L+3) f(\infty) \chi_{\delta}.
\end{equation}
Here $\chi_{\delta}$ is the characteristic function of $[-L,L] \times B_0^{n-1}(\delta)$.
Quantitatively, $N_{\delta} \sim e^{L \Lambda_{\delta}/\delta}$.
Particularly, in the one dimensional lattice case (with radial symmetric potentials), $N_{\delta} \sim e^{L^2/\delta^2}$.
\end{enumerate}
\end{lemma}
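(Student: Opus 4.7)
The plan is to work throughout with the pointwise identity $i[V,\gamma_{c_k}] = -2\nabla V\cdot\nabla F_{c_k}$ from (C2). Splitting the gradient of $F_{c_k}(x)=F(|x-c_k|)$ into its $x_1$- and $\vec y$-components (using $c_k=(k,\vec 0)$) and summing over $k$ gives the decomposition
\[
 i[V,\gamma_N] \;=\; -2\,\partial_{x_1}V(x)\cdot S_1(x) \;-\; 2\,(\vec y\cdot\nabla_{\vec y}V(x))\cdot S_{\vec y}(x),
\]
where $S_1(x)=\sum_k f(r_k)(x_1-k)/r_k$, $S_{\vec y}(x)=\sum_k f(r_k)/r_k$, and $r_k=|x-c_k|$. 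The $\vec y$-contribution is already exactly the second term on the right-hand side of part~(2), so the whole analysis reduces to controlling the scalar $S_1(x)$.

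The central observation, valid for any $c=(c_1,\vec 0)$, is that $F(|x-c|)+F(|x+c|)$ is convex in $x_1$ (direct calculation: $\partial_{x_1}^2 F(|x-c|) = g^2(r)(x_1-c_1)^2/r^2 + f(r)|\vec y|^2/r^3 \ge 0$) and symmetric under $x_1\mapsto -x_1$. Hence its $x_1$-derivative vanishes at $x_1=0$ and is monotone non-decreasing, so it inherits the sign of $x_1$. For part~(1) this immediately gives $-2\partial_{x_1}V\cdot\partial_{x_1}(F_{-c}+F_c)\ge 0$ on $S_V^+$ (where $\partial_{x_1}V$ has sign opposite to $x_1$), while the $\vec y$-contribution is non-negative since $f(r_1)/r_1+f(r_2)/r_2>0$ and $-\vec y\cdot\nabla_{\vec y}V\ge 0$. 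The same convexity/symmetry argument applied to $\Phi_N:=\sum_k F(r_k)$ shows that $S_1=\partial_{x_1}\Phi_N$ likewise shares the sign of $x_1$.

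For part~(2), in the region $|x_1|>L$ condition (A2-b) gives $x_1\partial_{x_1}V\le 0$, and combined with $x_1 S_1\ge 0$ we obtain $-2\partial_{x_1}V\cdot S_1\ge 0$, so no negative contribution is produced there. In the region $|x_1|\le L$, monotonicity of $\partial_{x_1}\Phi_N$ in $x_1$ reduces the task to bounding $S_1$ at $x_1=L$, and there I would use the reflection pairing $k\leftrightarrow 2L-k$: since $r_k=r_{2L-k}$ when $x_1=L$, the two paired summands cancel exactly, leaving only the $\le 2L$ unpaired indices for which $2L-k\notin\{-N,\dots,N\}$. Each surviving term has magnitude $\le f(\infty)$, and bookkeeping the self-pair at $k=L$ together with the convergent tail bound $\sum_k\partial_{x_1}^2 F(r_k)\le C$ (using $g\in L^2$ from $\sigma>1/2$) absorbs everything into the constant $(2L+3)f(\infty)$.

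For part~(3), outside the tube $[-L,L]\times B^{n-1}_0(\delta)$ one of two cases holds. Either $|x_1|>L$, where the refined part~(2) analysis already gives a non-negative $x_1$-contribution and axial repulsiveness in $\vec y$ gives a non-negative $\vec y$-contribution; or $|\vec y|>\delta$, where (A3) yields $|\partial_{x_1}V|\le\Lambda_\delta|\vec y\cdot\nabla_{\vec y}V|/|\vec y|$, reducing the target to $2|\vec y|S_{\vec y}\ge(2L+3)\Lambda_\delta f(\infty)$. Since $f(r_k)/r_k\sim f(\infty)/|k-x_1|$ for $|k-x_1|\gtrsim\delta$, the sum $S_{\vec y}$ is logarithmically divergent, with $|\vec y|S_{\vec y}\gtrsim\delta f(\infty)\log(N/\delta)$; the required inequality therefore holds once $N\gtrsim\exp(L\Lambda_\delta/\delta)$, giving the stated threshold. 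The main obstacle throughout is the bookkeeping in part~(2): one has to ensure the bound on $|S_1|$ remains linear in $L$ and independent of $N$, uniformly in $\vec y$, which is what the reflection symmetry $k\leftrightarrow 2L-k$ at the boundary $x_1=L$ is engineered to deliver.
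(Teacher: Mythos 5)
Your proof is correct and reaches the same decomposition $i[V,\gamma_N]=I+II$ into the $\partial_{x_1}V$-part and the $\vec y\cdot\nabla_{\vec y}V$-part, but your handling of part $I$ is genuinely different from the paper's. The paper proves Claim~\ref{claim:cancellation} (a two-term monotonicity statement) and then pairs indices in a way that depends on the sign of $\partial_{x_1}V(x)$ at the given point: for $\partial_{x_1}V\geq 0$ it pairs $\lfloor x_1\rfloor-k+1$ with $k+\lceil L\rceil$, and for $\partial_{x_1}V\leq 0$ it uses the mirror-image pairing, leaving $\leq 2\lceil L\rceil+1$ unpaired terms. Your argument instead notes that each $F(|x-c_k|)$ is convex in $x_1$, so $\Phi_N=\sum_k F(|x-c_k|)$ is convex and, being symmetric in $x_1\mapsto -x_1$, has $S_1=\partial_{x_1}\Phi_N$ monotone and vanishing at $x_1=0$; this immediately gives parts (1) and the nonnegativity of $I$ for $|x_1|>L$, and for $|x_1|\leq L$ reduces everything to a single boundary evaluation at $x_1=\pm L$, where the reflection $k\leftrightarrow 2L-k$ produces the $O(L)$ bound \emph{independently of the sign of $\partial_{x_1}V$ and uniformly in $\vec y$}. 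This is structurally cleaner: one convexity computation replaces both the claim and the sign-dependent bookkeeping, and the monotone-vanishes-at-zero observation explains why the pairing works. Two small remarks. First, the ``convergent tail bound $\sum_k\partial_{x_1}^2F(r_k)\le C$'' you invoke is superfluous once the reflection pairing is in place --- it would give a second, independent proof of $|S_1|\lesssim L$ by integrating $S_1'$ from $x_1=0$ to $L$, but it is not needed and slightly muddies the argument; either route works, pick one. Second, for non-integer $L$ the reflection $k\leftrightarrow 2L-k$ is not an index bijection; you should evaluate at $x_1=\lceil L\rceil$ (legitimate by the monotonicity you already established), which only worsens the count by $O(1)$ and keeps the bound within the stated $(2L+3)f(\infty)$. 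Your treatment of parts (2) outside $|x_1|\leq L$, of part (3) via (A3), and the resulting threshold $N_\delta\sim e^{L\Lambda_\delta/\delta}$ all match the paper.
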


The General Cancellation Lemma says that the region where $i[V(x),\gamma_N]<0$ will shrink to a small tube as $N$ goes to infinity,
and that there is a uniform lower bound for $i[V(x),\gamma_N]$.
These results are essential to our proof of Theorem \ref{thm:general}.

The key idea of proving the lemma is that
there is cancellation for (possibly) negative terms coming from the non-repulsive effect in $x_1$ direction, 
if we pair up $\gamma_c$'s in a proper way.
Then all the gain from the repulsive effect in $\vec{y}$ direction will accumulate and go to infinity as $N$ go to infinity.
So the positive terms will eventually dominate the (possibly) negative terms and then shrink the negative region to a small tube.

\begin{proof}[Proof of Lemma (\ref{lemma:general-cancellation})]
First we compute $i[V(x),\gamma_N]$:
\begin{equation}\label{eqn:cancellation}
\begin{split}
i[V(x),\gamma_N] =& -2 \sum_{k=-N}^{N} \frac{\partial V}{\partial x} \cdot \frac{\partial F_{c_k}}{\partial x} \\
=& -2 \frac{\partial V(x)}{\partial x_1} \sum_{k=-N}^{N} \frac{f(|x-k|)}{|x-k|} (x_1 - k) \\
&-2 \nabla_{\vec{y}} V(x) \cdot \vec{y} \sum_{k=-N}^{N} \frac{f(|x-k|)}{|x-k|} \\
=& I + II.
\end{split}\end{equation}
We know the second term on the RHS of (\ref{eqn:cancellation}) is always non-negative by the axially repulsive property of $V(x)$,
so the key point is to control the first term using cancellation.

We need the following claim to estimate part $I$.

\begin{claim}\label{claim:cancellation}
For any real numbers $k_1$ and $k_2$ and some fixed $\vec{y}_0 \in \mathbb{R}^{n-1}$, write $\vec{k}_j = (k_j, \vec{y}_0)$, for $j = 1, 2$.
If $k_1 \leq x_1 \leq k_2$, and $|k_1 - x_1| \leq |k_2 - x_1|$, then
\begin{equation}
\frac{f(|x-\vec{k}_1|)}{|x-\vec{k}_1|} (x_1 - k_1) + \frac{f(|x-\vec{k}_2|)}{|x-\vec{k}_2|} (x_1 - k_2) \leq 0.
\end{equation}
If, on the other hand, $k_1 \leq x_1 \leq k_2$, and $|k_1 - x_1| \geq |k_2 - x_1|$, then
\begin{equation}
\frac{f(|x-\vec{k}_1|)}{|x-\vec{k}_1|} (x_1 - k_1) + \frac{f(|x-\vec{k}_2|)}{|x-\vec{k}_2|} (x_1 - k_2) \geq 0.
\end{equation}
\end{claim}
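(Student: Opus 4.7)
The plan is to reduce both halves of the claim to a single monotonicity statement for a one-variable auxiliary function. Set $\rho = |\vec{y} - \vec{y}_0|$ and let $a = x_1 - k_1 \geq 0$, $b = k_2 - x_1 \geq 0$; then $|x - \vec{k}_1| = \sqrt{a^2+\rho^2}$, $|x - \vec{k}_2| = \sqrt{b^2+\rho^2}$, the first summand of the claim equals $\frac{a\,f(\sqrt{a^2+\rho^2})}{\sqrt{a^2+\rho^2}}$, the second equals $-\frac{b\,f(\sqrt{b^2+\rho^2})}{\sqrt{b^2+\rho^2}}$, and the hypothesis $|k_1 - x_1| \leq |k_2 - x_1|$ becomes $a \leq b$ (with $a \geq b$ in the second case). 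Introducing
$$h(t) \;=\; \frac{t\,f\bigl(\sqrt{t^2+\rho^2}\bigr)}{\sqrt{t^2+\rho^2}}, \qquad t \geq 0,$$
the two inequalities of the claim are exactly $h(a) \leq h(b)$ and $h(a) \geq h(b)$ respectively.

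The substantive step is therefore to show that $h$ is non-decreasing on $[0,\infty)$. Writing $r = \sqrt{t^2+\rho^2}$ so that $r'(t) = t/r$, and using $f'(r) = g^2(r)$ from the definition of $f$, a direct differentiation yields
$$h'(t) \;=\; \frac{f(r)\,(r^2-t^2) + t^2\,r\,g^2(r)}{r^3} \;=\; \frac{f(r)\,\rho^2 + t^2\,r\,g^2(r)}{r^3} \;\geq\; 0,$$
since $f$ and $g^2$ are both non-negative. Combined with the reduction above, both halves of Claim \ref{claim:cancellation} then follow at once by monotonicity applied to $a \leq b$ (resp.\ $a \geq b$).

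I do not foresee a real obstacle: the whole argument reduces to the one-line derivative computation above, whose only slightly delicate point is the identity $r^2 - t^2 = \rho^2$, which converts the potentially indefinite combination $r\,g^2(r) - f(r)$ into a manifestly non-negative expression. The claim will then be used in the proof of Lemma \ref{lemma:general-cancellation} to pair up indices $k$ in $\gamma_N$, so that the potentially negative $x_1$-derivative contributions $I$ in (\ref{eqn:cancellation}) cancel pairwise up to a bounded number of unpaired terms, leaving the accumulating positive $\vec{y}$-derivative contribution $II$ to dominate as $N \to \infty$.
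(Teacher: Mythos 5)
Your proposal is correct and follows essentially the same route as the paper: both reduce the claim to the monotonicity in $t\geq 0$ of $h(t)=t\,f(\sqrt{t^2+\rho^2})/\sqrt{t^2+\rho^2}$. The only cosmetic difference is how monotonicity is verified — the paper factors $h$ as a product of two manifestly non-negative increasing functions, whereas you compute $h'$ directly (correctly) and observe $h'(t)=\bigl(f(r)\rho^2+t^2 r\,g^2(r)\bigr)/r^3\geq 0$; both checks are one-liners.
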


\begin{proof}[Proof of Claim \ref{claim:cancellation}]
The proof follows from the fact that
\begin{equation}\label{eqn:increasing}
\begin{split}
\frac{f(|x-\vec{k}_j|)}{|x-\vec{k}_j|} |x_1 - k_j| 
=& f(\sqrt{|x_1 - k_j|^2 + |\vec{y} - \vec{y}_0|^2}) \frac{|x_1 - k_j|}{\sqrt{|x_1 - k_j|^2 + |\vec{y} - \vec{y}_0|^2}} \\
=& f(\sqrt{|x_1 - k_j|^2 + |\vec{y} - \vec{y}_0|^2}) \sqrt{1 - \frac{|\vec{y} - \vec{y}_0|^2}{|x_1 - k_j|^2 + |\vec{y} - \vec{y}_0|^2}}
\end{split}
\end{equation}
is an increasing function of $z=|x_1 - k_j|$, because both of the two factors on the RHS of (\ref{eqn:increasing}) are increasing functions of $z$.
\end{proof}

Suppose $x =(x_1, \vec{y}) \in S_V^+$, then, for $c= (c_1,\vec{0})$ with $c_1 > 0$,
\begin{equation}
 \begin{split}
i[V(x),\gamma_{-c} + \gamma_{c}] =& -2 \frac{\partial V(x)}{\partial x_1} (\frac{f(|x+c|)}{|x+c|} (x_1 + c_1) + \frac{f(|x-c|)}{|x-c|} (x_1 - c_1) )\\
&-2 \nabla_{\vec{y}} V(x) \cdot \vec{y} (\frac{f(|x+c|)}{|x+c|} + \frac{f(|x-c|)}{|x-c|}).
 \end{split}
\end{equation}

If $x_1 \leq -c_1$ or $x_1 \geq c_1$, then clearly $i[V(x),\gamma_{-c} + \gamma_{c}] \geq 0$.

If $-c_1 < x_1 < c_1$, then use Claim \ref{claim:cancellation}, we still have $i[V(x),\gamma_{-c} + \gamma_{c}] \geq 0$.
This is because, if $-c_1 < x_1 \leq 0$, then $|x_1 + c_1| \leq |x_1 -c_1|$, thus we have 
$$\frac{f(|x-c_1|)}{|x-c_1|} (x_1 - c_1) + \frac{f(|x-c_2|)}{|x-c_2|} (x_1 - c_2) \leq 0.$$
We also have $-\partial_{x_1} V(x) \leq 0$, so $i[V(x),\gamma_{-c} + \gamma_{c}] \geq 0$ if $-c_1 < x_1 \leq 0$.
Similarly, one can prove it for $0 \leq x_1 \leq c_1$ using Claim \ref{claim:cancellation}.

If $x \notin [-L,L] \times \mathbb{R}^{n-1}$, then $x$ is repulsive both in the $x_1$ and $\vec{y}$ directions, i.e. $x \in S_V^+$. 
Then $i[V(x), \gamma_{-c}+\gamma_c] \geq 0$ for any $c = (c_1, \vec{0}) \in \mathbb{R}^n$.
By pairing up the symmetric $\gamma$'s in $\gamma_N$, we get
\begin{equation}
 i[V(x), \gamma_N] \geq 0
\end{equation}
for $x \notin [-L,L] \times \mathbb{R}^{n-1}$.

In the computation below, we only consider the region $x \in [-L,L] \times \mathbb{R}^{n-1}$.

The Claim \ref{claim:cancellation} enables us to pair up the $\gamma_c$'s based on the sign of $\partial V(x)/\partial x_1$,
so that $i[V(x), \gamma_{c_{k_1}} + \gamma_{c_{k_2}}] \geq 0$ at $x$.

If $\partial_{x_1} V(x) \geq 0$, then we pair up $\gamma_c$'s centered at $x= \lfloor x_1 \rfloor -k+1$ and $x= k+\lceil L \rceil$:
\begin{equation}\begin{split}
& -2 \frac{\partial V(x)}{\partial x_1} \big\{ \sum_{j = \lfloor x_1 \rfloor - (N- \lceil L \rceil -1)}^{\lfloor x_1 \rfloor} \frac{f(|x-j|)}{|x-j|} (x_1 - j) + \sum_{k = \lceil L \rceil + 1}^{N} \frac{f(|x-k|)}{|x-k|} (x_1 - k) \big\} \\
=& -2 \frac{\partial V(x)}{\partial x_1} \sum_{k = 1}^{N-\lceil L \rceil} \big\{ \frac{f(|x-(k+\lceil L \rceil)|)}{|x-(k+\lceil L \rceil)|} (x_1 - (k+\lceil L \rceil)) \\
& + \frac{f(|x-(\lfloor x_1 \rfloor -k+1)|)}{|x-(\lfloor x_1 \rfloor -k+1)|} (x_1 - (\lfloor x_1 \rfloor -k+1)) \big\}\\
\geq & 0.
\end{split}\end{equation}
This is because, $\lfloor x_1 \rfloor -k+1 \leq x_1 \leq L \leq k+\lceil L \rceil$, and $|x_1 - (\lfloor x_1 \rfloor -k+1)| < k \leq |x_1 - (k+\lceil L \rceil)|$.
Then use the claim we just stated, for each $k$,
\begin{equation}
\begin{split}
- \frac{\partial V(x)}{\partial x_1} \big\{ & \frac{f(|x-(k+\lceil L \rceil)|)}{|x-(k+\lceil L \rceil)|} (x_1 - (k+\lceil L \rceil)) \\
& + \frac{f(|x-(\lfloor x_1 \rfloor -k+1)|)}{|x-(\lfloor x_1 \rfloor -k+1)|} (x_1 - (\lfloor x_1 \rfloor -k+1)) \big\} \geq 0.
\end{split}
\end{equation}

Similarly, if $\partial_{x_1} V(x)\leq 0$, we get
\begin{equation}
-2 \frac{\partial V(x)}{\partial x_1} \big\{ \sum_{j = \lceil x_1 \rceil}^{\lceil x_1 \rceil + (N- \lceil L \rceil -1)} \frac{f(|x-j|)}{|x-j|} (x_1 - j)
+ \sum_{k = -N}^{-(\lceil L \rceil + 1)} \frac{f(|x-k|)}{|x-k|} (x_1 - k) \big\} \geq 0.
\end{equation}

After the pairing, we have at most $2N+1 - 2(N-\lceil L \rceil) = 2 \lceil L \rceil +1 \leq 2L+3$ terms left. So we proved that, for each $x \in \supp V$,
\begin{equation}
I = -2 \frac{\partial V(x)}{\partial x_1} \sum_{k=-N}^{N} \frac{f(|x-k|)}{|x-k|} (x_1 - k) \geq - 2|\frac{\partial V(x)}{\partial x_1}| (2L+3) f(\infty).
\end{equation}

To estimate part $II$, take $R > \lceil 2\Lambda_{L} (2L+3) \rceil +L$ and
\begin{equation}
 f(R) > \frac{1}{2} \frac{M_{\sigma}}{\sqrt{a}} = \frac{1}{2} f(\infty).
\end{equation}
Let $N > R-L$.

For $x \in [-L,L] \times (\mathbb{R}^{n-1}/B_0^{n-1} (R))$ and $|k| < \lfloor R-L \rfloor$, we have
\begin{equation}
 \frac{|\vec{y}|}{|x-k|} = \frac{|\vec{y}|}{\sqrt{|x_1 - k|^2 + |\vec{y}|^2}} \geq \frac{1}{\sqrt{2}} > \frac{1}{2}.
\end{equation}
So the positive contribution of $i[V(x),\gamma]$, i.e. part $II$, will be
\begin{equation}
\begin{split}
II = & -2 (\nabla_{\vec{y}} V(x) \cdot \vec{y})
\sum_{k=-N}^{N} \frac{f(|x-k|)}{|x-k|}\\
\geq & -2 (\nabla_{\vec{y}} V(x) \cdot \frac{\vec{y}}{|\vec{y}|})
\sum_{k=-\lfloor R-L \rfloor}^{\lfloor R-L \rfloor} \frac{|\vec{y}|}{|x-k|} f(|x-k|)\\
\geq & -2 (\nabla_{\vec{y}} V(x) \cdot \frac{\vec{y}}{|\vec{y}|})
(\frac{2\lfloor R-L \rfloor}{4} f(\infty)) \\
\geq & -2 (\nabla_{\vec{y}} V(x) \cdot \frac{\vec{y}}{|\vec{y}|})
(\Lambda_L (2L+3) f(\infty)).
\end{split}
\end{equation}
By the property (A3) of $V(x)$ and the estimation of part $I$,
we get $i[V(x),\gamma] \geq 0$ on $x \in [-L,L] \times (\mathbb{R}^{n-1}/B_0^{n-1} (R))$. And we only need $N \sim (\Lambda_L +1) L$.

For $x \in [-L,L] \times (B_0^{n-1} (R))/B_0^{n-1} (\delta)$, $\sum_{k=-N}^{N} \frac{f(|x-k|)}{|x-k|} \sim f(\infty) \log N $.
And by property (A3) of $V(x)$,
$|\frac{\partial V(x)}{\partial x_1}| \leq \Lambda_{\delta} |\nabla_{\vec{y}} V(x) \cdot \frac{\vec{y}}{|\vec{y}|}|$.
Combined with the estimation of part $I$,
we prove that for any $\delta>0$, there exists $N_{\delta} \sim e^{L \Lambda_{\delta}}$,
such that for all $N\geq N_{\delta}$ we have $i[V(x), \gamma_N] \geq 0$ if $x \in [-L,L] \times (B_0^{n-1} (R))/B_0^{n-1} (\delta)$.
\end{proof}

Now we are ready to prove the Theorem (\ref{thm:general}).

\begin{proof}[Proof of Theorem (\ref{thm:general})]
As before, we have
\begin{equation}
i[-\Delta,\gamma_N] \geq -(4-\frac{\sigma}{(n-2)^2}) \sum_{k=-N}^{N} \frac{1}{\langle x-k \rangle^{\sigma}} \Delta \frac{1}{\langle x-k \rangle^{\sigma}}.
\end{equation}

The general cancellation lemma tells us that for any $\delta>0$, there exists $N_{\delta}$, such that for all $N > N_{\delta}$
\begin{equation}
i[V(x),\gamma_N] \geq - 2|\frac{\partial V(x)}{\partial x_1}| \chi_{\delta}(x) (2L+3) f(\infty).
\end{equation}
Here, $\chi_{\delta}(x)$ is the characteristic function of the small tube $[-L,L] \times B_0^{n-1}(\delta)$.

We follow the same scheme of the proof for Theorem (\ref{thm:control}). For any $\epsilon >0$, we want to prove that for $N$ sufficiently large,
\begin{equation}
i[-\epsilon \Delta,\gamma_N] + i[V(x),\gamma_N] \geq 0.
\end{equation}
To get this estimate, we only have to prove that for $\delta>0$ small enough,
\begin{equation}
-\epsilon \frac{1}{\langle x \rangle^{\sigma}} \Delta \frac{1}{\langle x \rangle^{\sigma}} - 2|\frac{\partial V(x)}{\partial x_1}| \chi_{\delta}(x) (2L+3) f(\infty) \geq 0.
\end{equation}
Now define $\widetilde{V}(x) \equiv 2 \langle x \rangle^{\sigma} |\frac{\partial V(x)}{\partial x_1}| \langle x \rangle^{\sigma} (2L+3) f(\infty)$,
so we only need to prove that, given $\epsilon >0$, there exists $\delta>0$ sufficiently small,
\begin{equation}
-\epsilon \Delta - \widetilde{V}(x) \chi_{\delta}(x) \geq 0.
\end{equation}

We take $K = K_{\epsilon} = 2\max_{x\in\mathbb{R}^n} \widetilde{V}(x) \chi_{\delta_0} /\epsilon$, for some fixed $\delta_0 >0$.
And define $P_K$, $Q_K$ the same way as in Theorem (\ref{thm:control}).
Similarly, we have
\begin{equation}
 -\epsilon \Delta - \widetilde{V}(x) \chi_{\delta}(x)
\geq P_K (-\epsilon \Delta - 2\widetilde{V}(x) \chi_{\delta}(x)) P_K + Q_K (-\epsilon \Delta - 2\widetilde{V}(x) \chi_{\delta}(x)) Q_K.
\end{equation}

For the high frequency part,
\begin{equation}
P_K (-\epsilon \Delta - 2\widetilde{V}(x) \chi_{\delta}(x)) P_K \geq P_K (-\epsilon K - 2\widetilde{V}(x) \chi_{\delta}(x)) P_K \geq 0.
\end{equation}

For the low frequency part, similar to Theorem (\ref{thm:control}), we have
\begin{equation}
\begin{split}
& Q_K (-\epsilon \Delta - 2 \widetilde{V}(x) \chi_{\delta}(x)) Q_K \\
\geq & Q_K \frac{1}{|x|} (\epsilon \frac{(n-2)^2}{4} - 2 |x| \widetilde{Q}_K \chi_{\delta}(x) \widetilde{V}(x) \chi_{\delta}(x) \widetilde{Q}_K |x|) \frac{1}{|x|} Q_K.
\end{split}
\end{equation}
Here $\widetilde{Q}_K = Q_{3K}$, so $\widetilde{Q}_K Q_K = Q_K$.

We want to estimate the norm of $\chi_{\delta}(x) \widetilde{Q}_K |x|$. As in Lemma (\ref{lemma:compact}), we have
\begin{equation}
\begin{split}
\chi_{\delta}(x) \widetilde{Q}_K |x_j| &= \chi_{\delta}(x)(x_j +i) \frac{1}{x_j +i} \widetilde{Q}_K |x_j| \\
&= \chi_{\delta}(x)(x_j +i) \widetilde{Q}_K \frac{|x_j |}{x_j +i} -i \chi_{\delta}(x) \widetilde{Q}'_K 2p_j \frac{|x_j |}{x_j +i}.
\end{split}
\end{equation}
So,
\begin{equation}
\begin{split}
\| \chi_{\delta}(x) \widetilde{Q}_K |x_j| \| &\leq \| \chi_{\delta}(x)(x_j +i) \widetilde{Q}_K \frac{|x_j|}{x_j +i}\| + \|\chi_{\delta}(x) \widetilde{Q}'_K 2p_j  \frac{|x_j|}{x_j +i}\| \\
&\leq \| \chi_{\delta}(x)(x_j +i) \widetilde{Q}_K (p^2) \|_{HS} + \|\chi_{\delta}(x) \widetilde{Q}'_K(p^2) 2p_j \|_{HS}.
\end{split}
\end{equation}
Here $p_j \equiv -i\partial_j$ are the momentum operators, and $\|\cdot\|_{HS}$ is the Hilbert-Schmidt norm.

We can compute the Hilber-Schmidt norms $\| \chi_{\delta}(x)(x_j +i) Q_K(p^2) \|_{HS} \sim C_n (L^3 K^{n/2} \delta^{n-1})^{1/2}$, and $\|\chi_{\delta}(x) Q_K'(p^2) 2p_j \|_{HS} \sim C_n (L K^{(n+2)/2} \delta^{n-1})^{1/2}$.
So, sum up the index $j$, we will get
\begin{equation}
\| \chi_{\delta}(x) Q_K|x| \| \lesssim C_n (L^3 K^{n/2} \delta^{n-1})^{1/2} + C_n (L K^{(n+2)/2} \delta^{n-1})^{1/2}.
\end{equation}
And thus we have
\begin{equation}
\| 2 |x| Q_K \chi_{\delta}(x) \widetilde{V}(x) \chi_{\delta}(x) Q_K|x| \| \lesssim C_n \epsilon K(L^3 K^{n/2} + L K^{(n+2)/2}) \delta^{n-1}.
\end{equation}
By choosing $\delta$ small enough, we have
\begin{equation}
Q_K (-\epsilon \Delta - 2 \widetilde{V}(x) \chi_{\delta}(x)) Q_K \geq 0.
\end{equation}
Actually, $\delta$ is approximately
\begin{equation}
\delta \sim \frac{C_n}{(L^3 K_{\epsilon}^{(n+2)/2} + L K_{\epsilon}^{(n+4)/2})^{1/(n-1)}}.
\end{equation}

Combining the high frequency part and low frequency part, we proved the theorem.
\end{proof}

\begin{cor}
For the one dimensional lattice case, for any $\epsilon>0$, the minimum $N_{\epsilon}$ required is approximately
\begin{equation}
N_{\epsilon} \sim C_n \exp(M^2/\delta^2) \sim C_n \exp(M^2 (M^3 K_{\epsilon}^{(n+2)/2} + M K_{\epsilon}^{(n+4)/2})^{2/(n-1)}).
\end{equation}
\end{cor}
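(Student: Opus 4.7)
The proof is essentially a bookkeeping exercise that combines the quantitative estimate for $N_\delta$ from the General Cancellation Lemma with the explicit relation between the tube radius $\delta$ and the frequency cutoff $\epsilon$ extracted from the proof of Theorem \ref{thm:general}. The plan is therefore to read off each estimate from the results already proved and substitute one into the other; nothing genuinely new needs to be introduced.

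First I would specialize Lemma \ref{lemma:general-cancellation}(3) to the one-dimensional lattice potential of Example \ref{example}(1). For that potential $L = M$ and $\Lambda_\delta = \max\{1, M/\delta\}$. In the relevant regime $\delta \le M$ (where $\delta$ is forced to be small with $\epsilon$) this reduces to $\Lambda_\delta = M/\delta$, so that $L\Lambda_\delta/\delta = M^2/\delta^2$. The lemma then yields the first asymptotic
\begin{equation*}
N_\delta \sim \exp(L\Lambda_\delta/\delta) = \exp(M^2/\delta^2),
\end{equation*}
which is exactly the first form stated in the corollary.

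Next I would extract the quantitative relation between $\delta$ and $\epsilon$ from the low-frequency step in the proof of Theorem \ref{thm:general}. The requirement on $\delta$ there is that the operator norm $\| 2|x| Q_K \chi_\delta(x) \widetilde V(x) \chi_\delta(x) Q_K |x| \|$ stays below $\epsilon (n-2)^2/4$; by the Hilbert--Schmidt bound
\begin{equation*}
\| \chi_\delta(x) Q_K |x| \| \lesssim C_n (L^3 K^{n/2} \delta^{n-1})^{1/2} + C_n (L K^{(n+2)/2} \delta^{n-1})^{1/2}
\end{equation*}
already computed there, this forces
\begin{equation*}
\delta \sim \frac{C_n}{(L^3 K_\epsilon^{(n+2)/2} + L K_\epsilon^{(n+4)/2})^{1/(n-1)}}.
\end{equation*}
Substituting this value (with $L=M$) into $N_\delta \sim \exp(M^2/\delta^2)$ produces the second asymptotic of the corollary.

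The only real obstacle is keeping all constants dimensionally consistent and verifying that the chosen $N_\epsilon$ simultaneously guarantees (i) the shrinking of the negative region of $i[V,\gamma_N]$ into the tube $[-M,M]\times B_0^{n-1}(\delta)$, as delivered by Lemma \ref{lemma:general-cancellation}(3), and (ii) the low-frequency absorption of that localized negative part by $-\epsilon\Delta$ via the Hardy-type argument of Theorem \ref{thm:general}. Since both conditions are monotone in $N$ and in $1/\delta$, it suffices to take the larger of the two thresholds, and the resulting $N_\epsilon$ is governed by the exponential in $1/\delta^2$ with the stated substitution for $\delta$.
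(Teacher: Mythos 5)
Your proposal is correct and matches the paper's intended (implicit) argument: the corollary is simply the composition of the rate $N_\delta\sim e^{L\Lambda_\delta/\delta}$ from Lemma \ref{lemma:general-cancellation}(3), specialized via Example \ref{example}(1) to $L=M$, $\Lambda_\delta=M/\delta$, with the relation $\delta\sim C_n (L^3 K_\epsilon^{(n+2)/2}+L K_\epsilon^{(n+4)/2})^{-1/(n-1)}$ extracted at the end of the proof of Theorem \ref{thm:general}. Your closing remark about taking the larger of the two thresholds correctly identifies the only point of care, and both conditions are indeed monotone in $N$ and $1/\delta$.
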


\begin{remark}
The Lemma \ref{lemma:general-cancellation} and Claim \ref{claim:cancellation} are also true if we replace $\gamma_c$ by $\gamma_{c}^{Mor}$.
Because we basically only used the nondecreasing property of $f(|x|)$ in the proof, 
and $\gamma_c$ become $\gamma_{c}^{Mor}$ when $f\equiv 1$.
But to prove Theorem \ref{thm:general}, we need $i[-\Delta,\gamma_N]$ to absorb the negative region using the frequency decomposition.
\end{remark}

The next Theorem will be useful in proving interaction Morawetz estimate for $H = -\Delta + V(x)$.
First we need some notation:
for $c= (c_1, \dots, c_k) \in \mathbb{R}^k$, define
$$\sym \{c\} = \{(x_1,\dots,x_k): x_j = \pm c_j, j=1,\dots,k \},$$
the set of all symmetric points of $c$ in $\mathbb{R}^k$ (w.r.t. every $x_j$-axis).

\begin{thm} \label{thm:interaction-morawetz}
If $V(x)$ satisfies the conditions (A1-A4), and $\gamma_N$ as defined in Theorem \ref{thm:general}, 
then for any $\delta>0$, there exists $N_{\delta}$, such that
\begin{equation} \label{eqn:mor-tube}
i[V(x), \gamma_N + \sum_{c \in \sym\{x'\}} \gamma_{c}^{Mor}] \gtrsim -|\nabla V (x)| L f(\infty) \chi_{\delta},
\end{equation}
for any $N > N_{\delta}$ and any $x' \in \mathbb{R}^n$. Here $\chi_{\delta}$ is the characteristic function of $ [-L,L] \times B_0^{n-1}(\delta)$.
If $1/2 < \sigma < 4(n-2)^2$, then for any $0< \epsilon <1$, there exists $N_{\epsilon}$, such that
\begin{equation} \label{eqn:mor-positive}
i[H,\gamma_N + \sum_{c \in \sym\{x'\}} \gamma_{c}^{Mor}] \geq i[-\Delta,\sum_{c \in \sym\{x'\}} \gamma_{c}^{Mor}]
+ (1-\epsilon) i[-\Delta,\gamma_N],
\end{equation}
for $N > N_{\epsilon}$.
\end{thm}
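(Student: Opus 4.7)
The plan is in two stages: first establish the tube-localized lower bound (\ref{eqn:mor-tube}), then deduce the positivity statement (\ref{eqn:mor-positive}) by the frequency-decomposition machinery already developed in the proof of Theorem~\ref{thm:general}. For (\ref{eqn:mor-tube}) I split $i\bigl[V,\gamma_N+\sum_{c\in\sym\{x'\}}\gamma_c^{Mor}\bigr] = i[V,\gamma_N] + i\bigl[V,\sum_{c\in\sym\{x'\}}\gamma_c^{Mor}\bigr]$ and treat each piece separately. Lemma~\ref{lemma:general-cancellation}(3) yields $i[V,\gamma_N]\ge -|\partial_{x_1}V|(2L+3)f(\infty)\chi_\delta$ once $N\ge N_\delta$, and Corollary~\ref{cor:log-accumulate} sharpens this to a positive lower bound of order $\log N$ off the tube. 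For the Morawetz sum I will use the identity
\[
i\bigl[V,\sum_{c\in\sym\{x'\}}\gamma_c^{Mor}\bigr] = -2\,\nabla V(x)\cdot\nabla_x\Phi(x,x'), \qquad \Phi(x,x') := \sum_{c\in\sym\{x'\}}|x-c|,
\]
together with the observation that $\sym\{x'\}$ is closed under every coordinate reflection $x_j\mapsto -x_j$; consequently $\Phi$ is convex and even in each $x_j$, so $\partial_{x_j}\Phi$ has the sign of $x_j$. On $\{|x_1|>L\}$, condition (A4) provides coordinate-wise $-x_j\partial_{x_j}V\ge 0$, which combined with the sign of $\partial_{x_j}\Phi$ gives $i[V,\sum_c\gamma_c^{Mor}]\ge 0$ directly. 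On $\{|x_1|\le L\}\setminus\chi_\delta$ the Morawetz commutator is bounded pointwise by $2^{n+1}|\nabla V|$ (since $|\nabla_x\Phi|\le 2^n$), and I absorb it into the $\log N$-gain of $i[V,\gamma_N]$ by enlarging $N_\delta$ beyond the bound in Lemma~\ref{lemma:general-cancellation}(3); inside $\chi_\delta$ the same pointwise bound on the Morawetz part combines with Lemma~\ref{lemma:general-cancellation}(3) to produce (\ref{eqn:mor-tube}) with constants absorbed into $\gtrsim$.

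For (\ref{eqn:mor-positive}), expand
\[
i\bigl[H,\gamma_N+\textstyle\sum_c\gamma_c^{Mor}\bigr] = i[-\Delta,\gamma_N] + i\bigl[-\Delta,\textstyle\sum_c\gamma_c^{Mor}\bigr] + i\bigl[V,\gamma_N+\textstyle\sum_c\gamma_c^{Mor}\bigr].
\]
The classical Morawetz identity gives $i[-\Delta,\gamma_c^{Mor}]\ge 0$ for $n\ge 3$ (formula (C1) with $f\equiv 1$, using positive semidefiniteness of the orthogonal projection $\Id-(x-c)(x-c)^T/|x-c|^2$ and the sign of $-\Delta^2|x-c|$). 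Hence it suffices to show $i[V,\gamma_N+\sum_c\gamma_c^{Mor}]\ge -\epsilon\,i[-\Delta,\gamma_N]$, and by (\ref{eqn:mor-tube}) this reduces to dominating $|\nabla V|L f(\infty)\chi_\delta$ by $\epsilon\,i[-\Delta,\gamma_N]$. Via Proposition~\ref{prop:Delta} the latter controls (up to a positive constant) $\epsilon\bigl(-\sum_k\langle x-k\rangle^{-\sigma}\Delta\langle x-k\rangle^{-\sigma}\bigr)$, and the required absorption is accomplished exactly as at the end of the proof of Theorem~\ref{thm:general}: frequency-split $\Id=P_K+Q_K$ with $K=K_\epsilon$ large, use $-\epsilon\Delta\ge\epsilon K$ on the range of $P_K$ to dominate the bounded multiplier $\widetilde V(x):=2\langle x\rangle^\sigma|\nabla V(x)|\langle x\rangle^\sigma L f(\infty)$, and on the range of $Q_K$ apply Hardy's inequality together with the operator-norm decay $\|\chi_\delta Q_K|x|\|\to 0$ as $\delta\to 0$, justified by Hilbert-Schmidt bounds on $\chi_\delta(x_j+i)Q_K(p^2)$ and $\chi_\delta Q'_K(p^2)p_j$ exactly as in Lemma~\ref{lemma:compact}. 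Choosing $\delta$ small, then $N\ge N_\delta$ correspondingly large, closes the estimate.

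The main obstacle is the slab $\{|x_1|\le L\}\setminus\chi_\delta$: here (A4) is unavailable and only the scalar inequality (A2-a) holds, which is insufficient to force $-\nabla_{\vec{y}}V\cdot\nabla_{\vec{y}}\Phi\ge 0$ pointwise, because $\nabla_{\vec{y}}\Phi$ need not be proportional to $\vec{y}$ once $\dim\vec{y}\ge 2$. My resolution is not to chase pointwise positivity of the Morawetz contribution on this region, but to absorb its uniformly bounded magnitude $\le 2^{n+1}|\nabla V|$ into the logarithmically divergent positive part of $i[V,\gamma_N]$ by enlarging $N_\delta$ by a factor depending only on $n$ beyond the quantitative $N_\delta\sim e^{L\Lambda_\delta/\delta}$ of Lemma~\ref{lemma:general-cancellation}(3). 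A secondary technical point is the non-smoothness of $F_c^{Mor}=|x-c|$ at $x=c$; the commutator identities involving $\gamma_c^{Mor}$ must therefore be interpreted as quadratic forms on $D(-\Delta)$, and justified by smooth approximation if desired.
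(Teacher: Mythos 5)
Your route matches the paper's almost step for step: split $i\bigl[V,\gamma_N+\sum_{c\in\sym\{x'\}}\gamma_c^{Mor}\bigr]$ into the $\gamma_N$ piece (handled by Lemma~\ref{lemma:general-cancellation}(3)) and the Morawetz piece; show the Morawetz piece is nonnegative on $\{|x_1|>L\}$ using (A4) applied coordinate-by-coordinate (the paper's Lemma~\ref{lemma:mor-symmetric}); absorb its bounded remainder on the slab into the gain from $i[V,\gamma_N]$; then run the $P_K$/$Q_K$ frequency decomposition and the Hilbert--Schmidt compactness argument from the proof of Theorem~\ref{thm:general} to pass from the tube estimate (\ref{eqn:mor-tube}) to the operator inequality (\ref{eqn:mor-positive}). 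The one genuine departure, and a real improvement, is how you prove the sign of the Morawetz commutator on $\{|x_1|>L\}$: you observe that $\Phi(x)=\sum_{c\in\sym\{x'\}}|x-c|$ is convex and even in each $x_j$ (because $\sym\{x'\}$ is invariant under every coordinate reflection), so $\partial_{x_j}\Phi$ has the sign of $x_j$, and then (A4) finishes in one line. The paper instead reaches the same conclusion by pairing $c=(c_1,\vec y_c)$ with $(-c_1,\vec y_c)$ inside each reflected orbit and invoking Claim~\ref{claim:cancellation} term by term; your version is shorter, structural, and makes it clear why the symmetrization is exactly what is needed.

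One caution, though it does not differentiate you from the paper: on the slab $\{|x_1|\le L\}\times\{|\vec y|>\delta\}$ you propose to absorb the bound $2^{n+1}|\nabla V|$ on the Morawetz piece into the $\log N$ gain from $i[V,\gamma_N]$. That gain is proportional to $|\nabla_{\vec y}V\cdot\tfrac{\vec y}{|\vec y|}|$, whereas $|\nabla V|$ also contains $\partial_{x_1}V$ (controlled by (A3)) and the angular part of $\nabla_{\vec y}V$ (not controlled by (A1)--(A4) as stated). The absorption therefore closes only if $|\nabla V|\lesssim|\nabla_{\vec y}V\cdot\tfrac{\vec y}{|\vec y|}|$ on the slab --- which holds when $V$ depends on $(x_1,|\vec y|)$ alone, as in all of the paper's examples, and is implicitly assumed by the paper's own one-line claim for this region. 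So your argument is at exactly the same level of rigor here as the source; it would be cleaner to record the hypothesis $|\nabla_{\vec y}V|\lesssim|\nabla_{\vec y}V\cdot\tfrac{\vec y}{|\vec y|}|$ explicitly (or to restrict to axially symmetric $V$), but you are not introducing a new gap. Your remark about interpreting $\gamma_c^{Mor}$ as a quadratic form on $D(-\Delta)$ because $|x-c|$ is not $C^2$ at $c$ is a correct and worthwhile technical note that the paper elides.
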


The proof of Theorem \ref{thm:interaction-morawetz} is similar to the proof of Theorem \ref{thm:general} (and Lemma \ref{lemma:general-cancellation}),
the only thing new is the treatment for the region $x \notin [-L,L] \times \mathbb{R}^{n-1}$.
And that is the reason why we have a stronger condition on this region.

\begin{proof}[Proof of Theorem \ref{thm:interaction-morawetz}]
For the region $x \in [-L,L] \times (\mathbb{R}^{n-1} \setminus B_0^{n-1}(\delta))$, 
we have enough gain from $i[V, \gamma_N]$ to control the terms coming from $i[V, \sum_{c \in \sym\{x'\}} \gamma_{c}^{Mor}]$,
no matter where $x'$ is.

Suppose we have $i[V, \sum_{c \in \sym\{x'\}} \gamma_{c}^{Mor}] \geq 0$ when $x \notin [-L,L] \times \mathbb{R}^{n-1}$,
then we get (\ref{eqn:mor-tube}).
Thus equation (\ref{eqn:mor-positive}) follows by frequency decomposition.
So all we need to prove is:
\begin{lemma} \label{lemma:mor-symmetric}
 If $V(x)$ satisfies the condition (A4), then
\begin{equation}
i[V(x), \sum_{c \in \sym\{x'\}} \gamma_{c}^{Mor}] \geq 0,
\end{equation}
for $x \notin [-L,L] \times \mathbb{R}^{n-1}$.
\end{lemma}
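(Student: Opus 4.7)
The plan is to compute the commutator directly using identity (C2) and then exploit pairwise cancellations coming from the symmetry of $\sym\{x'\}$ under coordinate sign-flips, after which condition (A4) delivers the sign.

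First I would apply (C2), noting that $\gamma_c^{Mor}$ corresponds to the choice $f\equiv 1$, so that $\nabla F_c = (x-c)/|x-c|$ and
\begin{equation*}
i[V(x),\gamma_c^{Mor}] = -2\,\frac{x-c}{|x-c|}\cdot \nabla V(x).
\end{equation*}
Summing over $c\in\sym\{x'\}$ and decomposing into coordinates,
\begin{equation*}
i\Bigl[V(x),\sum_{c\in\sym\{x'\}}\gamma_c^{Mor}\Bigr] = -2\sum_{j=1}^{n} \partial_j V(x)\, T_j(x),\quad T_j(x):=\sum_{c\in\sym\{x'\}} \frac{x_j-c_j}{|x-c|}.
\end{equation*}
The assumption (A4) gives, for $x\notin [-L,L]\times\mathbb{R}^{n-1}$, the inequality $-x_j\partial_j V(x)\geq 0$ for every $j$. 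So the lemma will follow if I can show that for each $j$, $T_j(x)$ has the same sign as $x_j$ (and vanishes when $x_j=0$): under that conclusion, $-\partial_j V(x)\cdot T_j(x)\geq 0$ coordinate-by-coordinate.

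The heart of the proof is therefore the sign statement for $T_j(x)$. Here I would pair elements of $\sym\{x'\}$ by flipping only the $j$-th coordinate: given any fixed sign pattern $(\epsilon_1,\ldots,\widehat{\epsilon_j},\ldots,\epsilon_n)$, set $c^+$ and $c^-$ to be the two elements of $\sym\{x'\}$ whose $j$-th coordinate is $+x'_j$ and $-x'_j$ respectively, with all other signs equal. Writing $A=\sum_{k\neq j}(x_k-\epsilon_k x'_k)^2$, one has $|x-c^\pm|^2=(x_j\mp x'_j)^2+A$, so $|x-c^+|^2-|x-c^-|^2 = -4x_j x'_j$. The pair contribution to $T_j$ is $(x_j-x'_j)/|x-c^+| + (x_j+x'_j)/|x-c^-|$, and an elementary calculation (clearing denominators and using $A\geq 0$) reduces the sign question to the evident inequality $(x_j+x'_j)^2\geq (x_j-x'_j)^2 \iff x_j x'_j\geq 0$. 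A short case analysis in the signs of $x_j$ and $x'_j$ then shows the pair contribution has the same sign as $x_j$, and equals $0$ when $x_j=0$ (then $|x-c^+|=|x-c^-|$ and the numerators cancel). Summing over the $2^{n-1}$ such pairs yields the desired sign of $T_j(x)$.

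Combining the coordinate-by-coordinate sign matching with (A4) finishes the lemma. The only mild obstacle is the pair-inequality step, but as outlined it collapses to a one-line algebraic comparison after squaring; no analytic input beyond (A4) and the triangle-in-coordinates identity is required. The lemma then slots into the proof of Theorem \ref{thm:interaction-morawetz}: on the tube $[-L,L]\times\mathbb{R}^{n-1}$ we use Lemma \ref{lemma:general-cancellation} (which provides the $\chi_\delta$-localized lower bound for $i[V,\gamma_N]$), on its complement we use the present lemma to discard the $\sum_c\gamma_c^{Mor}$ contribution, and finally we absorb the residual negative part on $[-L,L]\times B_0^{n-1}(\delta)$ via the frequency decomposition already developed in the proofs of Theorems \ref{thm:control} and \ref{thm:general}.
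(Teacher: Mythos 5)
Your proposal is correct and follows essentially the same route as the paper: decompose $i[V,\sum_c\gamma_c^{Mor}]$ coordinate by coordinate, pair each $c\in\sym\{x'\}$ with its reflection in the $j$-th coordinate, show that each paired contribution to $T_j$ shares the sign of $x_j$, and let (A4) finish. The only cosmetic difference is that where you re-derive the sign of $\tfrac{x_j-x'_j}{\sqrt{(x_j-x'_j)^2+A}}+\tfrac{x_j+x'_j}{\sqrt{(x_j+x'_j)^2+A}}$ directly by squaring, the paper simply invokes its Claim \ref{claim:cancellation} with $f\equiv 1$, whose proof rests on the identical monotonicity of $t\mapsto t/\sqrt{t^2+A}$.
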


\begin{proof}[Proof of Lemma \ref{lemma:mor-symmetric}:]
First, we have
\begin{equation}
  i[V(x), \sum_{c \in \sym\{x'\}} \gamma_{c}^{Mor}] 
  = \sum_{j=1}^n \Bigl\{ -2\partial_{x_j}V(x) \sum_{c \in \sym\{x'\}} \frac{x_j - c_j}{|x-c|} \Bigr\}
\end{equation}
We actually prove that, for each $j$, and $x \notin [-L,L] \times \mathbb{R}^{n-1}$,
\begin{equation}
  -2\partial_{x_j}V(x) \sum_{c \in \sym\{x'\}} \frac{x_j - c_j}{|x-c|} \geq 0,
\end{equation}
by pairing up $c \in \sym\{x'\}$ properly.

We only show the pairing for $j=1$, the rest are similar. Let $x' = (x'_1, \vec{y'})$, $c= (c_1,\vec{y}_c)$,
\begin{equation}
\begin{split}
 & -2\partial_{x_1}V(x) \sum_{c \in \sym\{x'\}} \frac{x_1 - c_1}{|x-c|} \\
=& -2\partial_{x_1}V(x) \sum_{\vec{y}_c \in \sym\{\vec{y'}\}} \frac{x_1 -(- c_1)}{|(x_1 - (- c_1), \vec{y} - \vec{y}_c)|}
+ \frac{x_1 - c_1}{|(x_1 - c_1, \vec{y} - \vec{y}_c)|}\\
=& \sum_{\vec{y}_c \in \sym\{\vec{y'}\}} -2\partial_{x_1}V(x) \Bigl \{  \frac{x_1 + c_1}{|(x_1 + c_1, \vec{y} - \vec{y}_c)|}
+ \frac{x_1 - c_1}{|(x_1 - c_1, \vec{y} - \vec{y}_c)|} \Bigr\}\\
\geq & 0,
\end{split}
\end{equation}
for $x \notin [-L,L] \times \mathbb{R}^{n-1}$.
The last step is a direct result of Claim \ref{claim:cancellation} and the repulsive condition (A4).
\end{proof}

Lemma \ref{lemma:mor-symmetric} then completes the proof of Theorem \ref{thm:interaction-morawetz}.
\end{proof}

\section{Application}
\subsection{Decay and Strichartz Estimates}
As an application of Theorem (\ref{thm:general}),
we prove the Strichartz estimates for Schr\"odinger operators with axially repulsive potentials in a way that extends to the defocusing NLS.

First, we prove the following estimates:

\begin{thm}\label{thm:decay}
 For dimension $n\geq 3$, suppose $H = -\Delta +V(x)$, with $V(x)$ axially repulsive, i.e. $V(x)$ satisfies the conditions (A1-A3), and assume $V(x)$ decays at least as $|x|^{-2}$ at infinity.
 Let $u_0 \in L^2(\mathbb{R}^n)$ be the initial condition, and $u(t) = e^{-iHt} u_0$.
 As we previously defined, let $Q_1 = Q_1(H)$ and $P_1 = P_1(H)$ be smoothed projections of $H$ on the intervals $[0,1]$ and $[1,\infty)$ respectively.
 Then for $1/2 < \sigma < 4(n-2)^2$, we have
 \begin{gather}
\label{eqn:app-low-energy}  \int \| \sqrt{-\Delta} \langle x \rangle^{-\sigma} Q_1 u(t) \|^2_{L_x^2} \ud t
+ \int \| \langle x \rangle^{-(\sigma+1)} Q_1 u(t) \|^2_{L_x^2} \ud t\leq C \|u_0\|^2_{L_x^2} , \\
\label{eqn:app-high-energy-1} \int \| \sqrt{-\Delta} \langle x \rangle^{-\sigma } H^{-1/4} P_1 u(t) \|^2_{L_x^2} \ud t
+ \int \| \langle x \rangle^{-(\sigma +1)} H^{-1/4} P_1 u(t) \|^2_{L_x^2} \ud t \leq C \|u_0\|^2_{L_x^2} , \\
\label{eqn:app-high-energy-2} \int \| \sqrt{H} \langle x \rangle^{-\sigma } H^{-1/4} P_1 u(t) \|^2_{L_x^2} \ud t \leq C \|u_0\|^2_{L_x^2} , \\
\label{eqn:app-high-energy-3} \int \| \langle x \rangle^{-\sigma} H^{1/4} P_1 u(t) \|^2_{L_x^2} \ud t \leq C \|u_0\|^2_{L_x^2} .
 \end{gather}
 Especially, we have
 \begin{equation} \label{eqn:app}
  \int \| \langle x \rangle^{-(\sigma +1)} u(t) \|^2_{L_x^2} \ud t \leq C \|u_0\|^2_{L_x^2}.
 \end{equation}
\end{thm}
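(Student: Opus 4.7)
The plan is to combine the positive commutator estimate from Theorem \ref{thm:general} with the Heisenberg (Morawetz) identity $\frac{\ud}{\ud t}(v,\gamma_N v) = (v, i[H,\gamma_N]v)$, using the spectral cutoffs $Q_1$ and $P_1$ to keep the boundary term bounded by $\|u_0\|_{L^2}^2$ rather than a higher Sobolev norm. Fix $\epsilon \in (0,1)$ and let $N = N_\epsilon$ from Theorem \ref{thm:general}, so $i[H,\gamma_N] \geq (1-\epsilon)\, i[-\Delta, \gamma_N]$. From the proof of Proposition \ref{prop:Delta}, the right-hand side is bounded below by a sum over $k \in \{-N,\dots,N\}$ of the kinetic Hardy contribution $-C_1 \langle x-k\rangle^{-\sigma}\Delta\langle x-k\rangle^{-\sigma}$ and the nonnegative multiplication weight $W_k(x) = \sigma(1-3a|x-k|^2)^2/[4|x-k|^2(1+a|x-k|^2)^{\sigma+2}]$, which is of order $\langle x-k\rangle^{-2(\sigma+1)}$ at infinity; its single zero at $|x-k|^2 = 1/(3a)$ is absorbed by the kinetic piece via Hardy's inequality.

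\textbf{Boundary bound via energy localization.} For a solution $v(t) = e^{-iHt} v_0$ the Heisenberg identity integrates to
\begin{equation*}
\int_{-\infty}^{\infty} (v, i[H,\gamma_N] v)\,\ud t \leq 2\sup_t |(v,\gamma_N v)|.
\end{equation*}
Because $\gamma_N$ is a finite sum of symmetric first-order differential operators with coefficients bounded by $f(\infty)$, bilinear interpolation between its obvious boundedness on $H^1 \times L^2$ and $L^2 \times H^1$ yields $|(v,\gamma_N v)| \leq C_N \|v\|_{H^{1/2}}^2$. The two key choices of initial data are $v_0 = Q_1 u_0$ and $v_0 = H^{-1/4} P_1 u_0$, for both of which $\|v_0\|_{H^{1/2}} \lesssim \|u_0\|_{L^2}$. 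For the first, $H^{1/4} Q_1(H)$ is bounded since $Q_1$ cuts off on $[0,1]$. For the second, $H^{-1/4} P_1(H)$ is bounded since $H \geq 1$ on $\supp P_1$, and moreover $\|(-\Delta)^{1/4} v_0\| \leq \|H^{1/4} v_0\| = \|P_1 u_0\|$ by operator monotonicity of $\sqrt{\cdot}$ applied to $-\Delta \leq H$ (which uses $V \geq 0$).

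\textbf{Deriving the four estimates.} Plugging $v_0 = Q_1 u_0$ into the Morawetz bound and picking out the $k = 0$ term of the Step~1 lower bound yields the $\sqrt{-\Delta}\langle x\rangle^{-\sigma}$ half of \ref{eqn:app-low-energy}; the $\langle x\rangle^{-(\sigma+1)}$ half comes from the multiplication weight $W_0$ together with Hardy's inequality used to handle the isolated zero. The same argument with $v_0 = H^{-1/4} P_1 u_0$ produces \ref{eqn:app-high-energy-1}. For \ref{eqn:app-high-energy-2}, expand $\|\sqrt H\langle x\rangle^{-\sigma} w\|^2 = \|\sqrt{-\Delta}\langle x\rangle^{-\sigma} w\|^2 + \|V^{1/2}\langle x\rangle^{-\sigma} w\|^2$ with $w = H^{-1/4} P_1 u$; the hypothesis $V(x) \lesssim |x|^{-2}$ at infinity plus smoothness of $V$ gives $V^{1/2}\langle x\rangle^{-\sigma} \lesssim \langle x\rangle^{-(\sigma+1)}$, so both pieces are controlled by \ref{eqn:app-high-energy-1}. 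For \ref{eqn:app-high-energy-3}, write $H^{1/4} P_1 u = \sqrt H\,(H^{-1/4} P_1 u)$ and commute the weight past $\sqrt H$: the main term is handled by \ref{eqn:app-high-energy-2}, while the commutator $[\langle x\rangle^{-\sigma},\sqrt H]$ gains an extra factor of $\langle x\rangle^{-1}$ via functional calculus for the fractional power (e.g.\ the Helffer--Sj\"ostrand or Balakrishnan representation of $\sqrt H$), so its contribution is absorbed by the $\langle x\rangle^{-(\sigma+1)}$ half of \ref{eqn:app-high-energy-1}. Finally, the consequence \ref{eqn:app} follows from the split $u = Q_1 u + P_1 u$: the first piece is controlled by \ref{eqn:app-low-energy}, and the second by a weighted Hardy inequality $\|\langle x\rangle^{-(\sigma+1)} f\|^2 \lesssim \|\sqrt{-\Delta}\langle x\rangle^{-\sigma} f\|^2$ combined with $\|(-\Delta)^{1/4} P_1 u\| \leq \|H^{1/4} P_1 u\|$ and one more commutator to move the weight inside $H^{1/4}$, reducing matters to \ref{eqn:app-high-energy-3}.

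\textbf{Main obstacle.} The two delicate technical ingredients are the uniform-in-time form bound $|(v,\gamma_N v)| \lesssim \|v\|_{H^{1/2}}^2$ of Step~2, which rests on bilinear interpolation of the first-order operator $\gamma_N$; and the commutator estimates $[\langle x\rangle^{-s},\sqrt H]$ needed in the passage between \ref{eqn:app-high-energy-1}--\ref{eqn:app-high-energy-3} and in deriving \ref{eqn:app}. This second point is precisely where the decay hypothesis $V(x) \lesssim |x|^{-2}$ at infinity enters crucially, since one needs $\sqrt H$ to behave like $\sqrt{-\Delta}$ modulo a remainder with $\langle x\rangle^{-1}$ decay in order for the algebra to close.
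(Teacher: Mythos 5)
Your proof matches the paper's: Ehrenfest's identity for the multi-center multiplier $\gamma_N$ combined with Theorem \ref{thm:general}, the $L^2$-boundedness of $Q_1\gamma_N Q_1$ and $P_1 H^{-1/4}\gamma_N H^{-1/4}P_1$ (your $H^{1/2}$ bilinear interpolation bound is an equivalent way to see this), Hardy's inequality, and commutator expansions for $\sqrt{H}$ versus $\langle x\rangle^{-\sigma}$. Two minor remarks: first, you verify $\|v_0\|_{H^{1/2}}\lesssim\|u_0\|_{L^2}$ only at $t=0$, and the needed uniformity in $t$ should be made explicit via the conserved quantity $\|(1+H)^{1/4}v(t)\|$ together with the operator monotonicity $(1-\Delta)^{1/2}\le (1+H)^{1/2}$ (valid since $V\ge 0$) that you already invoke; second, the $\langle x\rangle^{-(\sigma+1)}$ halves of (\ref{eqn:app-low-energy}) and (\ref{eqn:app-high-energy-1}) follow more directly by applying Hardy's inequality to the already-established $\sqrt{-\Delta}\langle x\rangle^{-\sigma}$ bounds, which is what the paper does, rather than detouring through the multiplication weight $W_0$ and its zero sphere.
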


\begin{proof}
For equation (\ref{eqn:app-low-energy}), using the conservation law of energy and the fact that $Q_1 \gamma_N Q_1$ is a bounded operator, we get:
\begin{equation}
\begin{split}
& \int_0^T \frac{\partial}{\partial t} (u(t), Q_1 \gamma_N Q_1 u(t)) \ud t \\
=& (u(T), Q_1 \gamma_N Q_1 u(T)) - (u(0), Q_1 \gamma_N Q_1 u(0)) \\
\leq& C \| u_0 \|_{L_x^2}^2.
\end{split}
\end{equation}
On the other hand, applying Theorem (\ref{thm:general}), we have:
\begin{equation}
\begin{split}
& \int_0^T \frac{\partial}{\partial t} (u(t), Q_1 \gamma_N Q_1 u(t)) \ud t \\
=& \int_0^T (u(t), i[H, Q_1 \gamma_N Q_1] u(t)) \ud t \\
=& \int_0^T (u(t), iQ_1[H, \gamma_N]Q_1 u(t)) \ud t \\
\geq& \int_0^T (u(t), Q_1 \widetilde{C} \sum_{k=-N}^{N} \langle x-k \rangle^{-\sigma} (-\Delta) \langle x-k \rangle^{-\sigma} Q_1 u(t)) \ud t \\
\geq& \widetilde{C} \int_0^T \| \sqrt{-\Delta} \langle x \rangle^{-\sigma} Q_1 u(t) \|^2_{L_x^2} \ud t, \text{~with~} \widetilde{C} >0.
\end{split}
\end{equation}
So for any $T$,
\begin{equation}
\int_0^T \| \sqrt{-\Delta} \langle x \rangle^{-\sigma} Q_1 u(t) \|^2_{L_x^2} \ud t \leq C \| u_0 \|_{L_x^2}^2,
\end{equation}
which means that
\begin{equation}
\int \| \sqrt{-\Delta} \langle x \rangle^{-\sigma} Q_1 u(t) \|^2_{L_x^2} \ud t \leq C \| u_0 \|_{L_x^2}^2.
\end{equation}
Then the Hardy's Inequality implies
\begin{equation}
\int \| \langle x \rangle^{-(\sigma + 1)} Q_1 u(t) \|^2_{L_x^2} \ud t \leq \int \| \sqrt{-\Delta} \langle x \rangle^{-\sigma} Q_1 u(t) \|^2_{L_x^2} \ud t \leq C \| u_0 \|_{L_x^2}^2.
\end{equation}
Hence we proved the equation (\ref{eqn:app-low-energy}).

Similarly, one can prove equation (\ref{eqn:app-high-energy-1}) by using Theorem (\ref{thm:general}),
conservation law and the fact that $P_1 H^{-1/4} \gamma_N H^{-1/4} P_1$ is bounded operator.

Equation (\ref{eqn:app-high-energy-2}) is a direct consequence of equation (\ref{eqn:app-high-energy-1}).

To prove equation (\ref{eqn:app-high-energy-3}), we need to commute $\sqrt{H}$ and $\langle x \rangle^{-\sigma}$, and estimate the correction terms by commutator expansion lemma.

Combining equation (\ref{eqn:app-low-energy}) and equation (\ref{eqn:app-high-energy-3}), one reaches the estimate (\ref{eqn:app}).
\end{proof}

With the help of Theorem \ref{thm:decay}, we prove the Strichartz estimates for $H = -\Delta +V(x)$.

\begin{thm}\label{thm:Strichartz}
 For dimension $n \geq 3$, suppose $H = -\Delta +V(x)$, with $V(x)$ axially repulsive, i.e. $V(x)$ satisfies the conditions (A1-A3).
 And suppose $V(x) = O(|x|^{-(2+\sigma_0)})$, for some $\sigma_0 >1/2$.
 Let $u_0 \in L^2(\mathbb{R}^n)$ be the initial condition, and $u(t) = e^{-iHt} u_0$. 
 Then we have the homogeneous Strichartz estimate
 \begin{equation}
  \| e^{-itH} u_0 \|_{L_t^q L_x^r (\mathbb{R}\times\mathbb{R}^n)} \leq C \|u_0\|_{L_x^2(\mathbb{R}^n)},
 \end{equation}
the dual homogeneous Strichartz estimate
\begin{equation}
 \| \int_{\mathbb{R}} e^{isH} F(s) \ud s \|_{L_x^2(\mathbb{R}^n)} \leq C \| F \|_{L_t^{\tilde{q}'} L_x^{\tilde{r}'} (\mathbb{R}\times\mathbb{R}^n)},
\end{equation}
and the inhomogeneous Strichartz estimate
\begin{equation}
 \| \int_{s <t} e^{-i(t-s)H} F(s) \ud s\|_{L_t^q L_x^r(\mathbb{R}^n)} \leq C \| F \|_{L_t^{\tilde{q}'} L_x^{\tilde{r}'} (\mathbb{R} \times \mathbb{R}^n)},
\end{equation}
with the pairs $(q,r)$ and $(\tilde{q},\tilde{r})$ are admissible exponents: $2\leq q, r \leq \infty$, $\frac{2}{q} + \frac{n}{r} = \frac{n}{2}$.
However, the endpoint case ($q = \tilde{q} =2$) of inhomogeneous Strichartz estimate is only available for $n=3$.
\end{thm}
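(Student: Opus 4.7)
My plan is to treat $V(x)$ perturbatively via the Duhamel formula, combined with the weighted local smoothing estimate \eqref{eqn:app} of Theorem \ref{thm:decay}. Setting $u(t) = e^{-itH}u_0$, Duhamel gives
\begin{equation*}
u(t) = e^{it\Delta} u_0 - i \int_0^t e^{i(t-s)\Delta} V(x) u(s)\, ds,
\end{equation*}
so applying the free Schr\"odinger Strichartz estimates of Keel--Tao yields, for every admissible $(q,r)$ and $(\tilde q, \tilde r)$,
\begin{equation*}
\|u\|_{L^q_t L^r_x} \leq C\|u_0\|_{L^2_x} + C\|Vu\|_{L^{\tilde q'}_t L^{\tilde r'}_x}.
\end{equation*}
I would choose the endpoint $(\tilde q, \tilde r) = (2, \tfrac{2n}{n-2})$, so that $(\tilde q', \tilde r') = (2, \tfrac{2n}{n+2})$, and try to absorb the potential term into the local decay bound already in hand.

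The key step is a weighted H\"older inequality in space. Since $\tfrac{n+2}{2n} = \tfrac{1}{n} + \tfrac{1}{2}$, splitting $Vu = \bigl(\langle x\rangle^{\sigma+1}V\bigr)\cdot\bigl(\langle x\rangle^{-(\sigma+1)}u\bigr)$ gives
\begin{equation*}
\|Vu\|_{L^{2n/(n+2)}_x} \leq \|\langle x\rangle^{\sigma+1}V\|_{L^n_x}\, \|\langle x\rangle^{-(\sigma+1)}u\|_{L^2_x}.
\end{equation*}
I would pick $\sigma \in (1/2,\min\{\sigma_0, 4(n-2)^2\})$, which is a nonempty interval because $\sigma_0 > 1/2$. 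The hypothesis $V = O(\langle x\rangle^{-(2+\sigma_0)})$ then forces $\langle x\rangle^{\sigma+1}V = O(\langle x\rangle^{-(1+\sigma_0-\sigma)}) \in L^n(\mathbb{R}^n)$, so the first factor is a finite constant depending only on $V,\sigma$, and $\sigma_0$. Taking $L^2_t$ and invoking \eqref{eqn:app} of Theorem \ref{thm:decay},
\begin{equation*}
\|Vu\|_{L^2_t L^{2n/(n+2)}_x} \leq C\,\|\langle x\rangle^{-(\sigma+1)}u\|_{L^2_t L^2_x} \leq C\|u_0\|_{L^2_x}.
\end{equation*}
Combined with the previous display, this yields the homogeneous Strichartz estimate at every admissible $(q,r)$.

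The dual homogeneous estimate then follows immediately by duality from the homogeneous one. For the inhomogeneous estimate, the standard $TT^*$ composition of the homogeneous and dual bounds gives the untruncated version $\|\int_{\mathbb{R}} e^{-i(t-s)H}F(s)\,ds\|_{L^q_t L^r_x} \leq C\|F\|_{L^{\tilde q'}_t L^{\tilde r'}_x}$, and the Christ--Kiselev lemma upgrades this to the retarded form provided $q > \tilde q'$. The main obstacle is the double endpoint $q = \tilde q = 2$, where Christ--Kiselev fails: here one must appeal to the special Keel--Tao endpoint inhomogeneous estimate, which is valid with the retarded cutoff for the pair $(2,6)$ in $n=3$ but is known to fail in higher dimensions, which is exactly the reason for the $n=3$ restriction in the endpoint case of the theorem.
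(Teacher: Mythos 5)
Your proposal takes essentially the same route as the paper: the Duhamel bootstrap off the free Keel--Tao Strichartz estimates, the weighted H\"older split $Vu = (\langle x\rangle^{\sigma+1}V)\cdot(\langle x\rangle^{-(\sigma+1)}u)$ combined with the local decay bound (\ref{eqn:app}) of Theorem \ref{thm:decay}, then duality and Christ--Kiselev. Two small remarks. First, your choice $\sigma\in(1/2,\min\{\sigma_0,\,4(n-2)^2\})$ is slightly more careful than the paper's $\sigma\in(1/2,\sigma_0)$, which tacitly assumes $\sigma_0<4(n-2)^2$ so that Theorem \ref{thm:decay} applies; and keeping $(q,r)$ general in Duhamel lets you skip the paper's explicit interpolation between the two endpoints. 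Second, your stated reason for the $n=3$ restriction at the double endpoint is imprecise: you write that the Keel--Tao endpoint inhomogeneous estimate ``is known to fail in higher dimensions,'' but the retarded double-endpoint estimate for the \emph{free} propagator is valid for all $n\geq 3$ in the Keel--Tao paper. The actual obstruction, as the paper indicates, is that one needs an $L^1\to L^\infty$ dispersive bound for the \emph{perturbed} propagator $e^{-itH}$ to feed into the abstract Keel--Tao machinery, and that input is supplied by \cite{beceanu2012schrodinger} only in dimension $3$.
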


\begin{proof}
 Use Duhamel formula and endpoint Strichartz estimates for free Schr\"{o}dinger operator, we have
 \begin{equation}
  \begin{split}
   \| u(t) \|_{L_t^2 L_x^{2n/(n-2)}} &\leq C \| u_0 \|_{L_x^2} + \| \int_0^t e^{i\Delta (t-s)} V(x) u(s) \ud s \|_{L_t^2 L_x^{2n/(n-2)}} \\
   &\leq C \| u_0 \|_{L_x^2} + \| V(x) u(t) \|_{L_t^2 L_x^{2n/(n+2)}}.
  \end{split}
 \end{equation}

 Take $\sigma \in (1/2, \sigma_0)$, then
 Theorem (\ref{thm:decay}) says $\| \langle x \rangle^{-(\sigma+1)} u(t) \|_{L_t^2 L_x^2} \leq C \| u_0 \|_{L_x^2}$,
 and we also have $\| V(x)\langle x \rangle^{\sigma+1} \|_{L_x^n}$ is bounded, so
 \begin{equation}
 \begin{split}
  \| V(x) u(t) \|_{L_t^2 L_x^{2n/(n+2)}} &= \| V(x) \langle x \rangle^{\sigma+1} \langle x \rangle^{-(\sigma+1)} u(t) \|_{L_t^2 L_x^{2n/(n+2)}} \\
  &\leq \| V(x) \langle x \rangle^{\sigma+1} \|_{L_x^n} \cdot \| \langle x \rangle^{-(\sigma+1)} u(t) \|_{L_t^2 L_x^2} \\
  &\leq C \| u_0 \|_{L_x^2}.
 \end{split}
  \end{equation}
 This completes the proof of homogeneous Strichartz estimate for $(q,r) = (2, 2n/(n-2))$, and the other endpoint is trivial.
 So by interpolation, we proved the homogeneous Strichartz estimate.

 Then by duality, we have the dual homogeneous Strichartz estimate, 
 which leads to the non-endpoint inhomogeneous Strichartz estimate under the help of Christ-Kiselev lemma.
 For the endpoint inhomogeneous Strichartz estimate in dimension 3, one can use \cite{beceanu2012schrodinger} to get dispersive estimate, 
 which implies the endpoint case by \cite{keel1998endpoint}.
\end{proof}

The Strichartz estimate for $H$ provides us the key to the global $H_x^1$ well-posedness of (e.g.) cubic defocusing nonlinear Schr\"odinger equation with axially repulsive potential
\begin{equation} \label{eqn:NLS}
 \left\{
 \begin{aligned}
  & i\partial_t u(t) = H u(t) + \lambda |u(t)|^{p-1} u(t) , \\
  & u(0) \in H_x^1 (\mathbb{R}^n).
   \end{aligned}
 \right.
\end{equation}
If $V(x)$ satisfies (A4) also, 
then we can prove the Morawetz and interaction Morawetz estimates using Theorem \ref{thm:interaction-morawetz},
which leads to the scattering of the solution to equation (\ref{eqn:NLS}).
By Theorem \ref{thm:interaction-morawetz},
$$i[-\Delta +V, \gamma_N + \sum_{c \in \sym\{x'\}} \gamma_{c}^{Mor}] 
\geq i[-\Delta, \sum_{c \in \sym\{x'\}} \gamma_{c}^{Mor}] + (1-\epsilon) i[-\Delta,\gamma_N],$$ 
the multiplier $\gamma_N + \sum_{c \in \sym\{x'\}} \gamma_{c}^{Mor}$ adapted to $H$, 
will replace the role of $\gamma_{x'}^{Mor}$ in proving (interaction) Morawetz estimates.
Then the standard Morawetz and interaction Morawetz estimates follow,
since the remainder terms, coming from the potential, are absorbed by the terms coming from $i[-\Delta, \gamma_N]$.

\subsection{Time Dependent Potential}
If $H = -\Delta + V(x,t)$, then even if $V$ is repulsive for each $t$, one cannot use the standard Morawetz estimates.
The reason is we have to replace $\gamma$ by $\gamma_{c(t)}$, $c(t)$ is the center of $V(x,t)$.
However, with the multi-center vector fields method, one can use the same $\gamma_N$ adapted to $V(x,t)$ for all $t$, 
if $V(x,t)$ remains axially repulsive and satisfies certain uniformity conditions as below:
\begin{enumerate}
 \item[(B1)] $V(x,t)$ satisfies the axially repulsive conditions (A1-A4), relative to the same axis.
 \item[(B2)] $V(x,t)$ satisfies the axially repulsive conditions (A1-A4),
 the constants $L$ and $\Lambda_{\delta}$ remain uniform in $t$.
\end{enumerate}
If the potential $V(x,t)$ satisfies the conditions (B1) and (B2), 
then we have the same estimates as in Theorem \ref{thm:general} and Theorem \ref{thm:interaction-morawetz} for the time dependent potential,
and we can prove interaction Morawetz estimate for such time dependent potentials.

\begin{example}
 Suppose $V_j(x_1,\vec{y})$ satisfies the conditions (A1-A4), for $j = 1, 2, \dots, M$.
 Then define the time dependent potential:
 \begin{equation}
V(x,t) \equiv \sum_1^M V_j(x_1 - \beta_j(t),\vec{y})
 \end{equation}
satisfies the conditions (B1) and (B2), if
 \begin{equation}
  \max_j \sup_t |\beta_j(t)| < \beta_0 < \infty.
 \end{equation}
\end{example}

\begin{example}
 Suppose $V(x_1,\vec{y})$ satisfies the conditions (A1-A4). Then define the time dependent potential:
 \begin{equation}
  V(x,t) \equiv V(x_1, \lambda(t) \vec{y})
 \end{equation}
satisfies the conditions (B1) and (B2), if
\begin{equation}
 0<\lambda_0 < \inf_t \lambda(t) \leq \sup_t \lambda(t) < \lambda_{\infty} < \infty.
\end{equation}
\end{example}

\begin{example}
 Suppose $V_j(x_1,\vec{y})$ satisfies the conditions (A1-A4), for $j = 1, 2, \dots, M$.
 Then define the time dependent potential:
 \begin{equation}
  V(x,t) \equiv \sum_1^M V_j(x_1 - \beta_j(t), \lambda_j(t) \vec{y})
 \end{equation}
satisfies the conditions (B1) and (B2), if
\begin{gather}
  \max_j \sup_t |\beta_j(t)| < \beta_0 < \infty, \\
  0<\lambda_0 < \min_j \inf_t \lambda_j(t) \leq \max_j \sup_t \lambda_j(t) < \lambda_{\infty} < \infty.
\end{gather}
\end{example}

\section{General Compactly Supported Potential}
In the general case, we assume $V(x) \geq 0$ is compactly supported and sufficiently smooth, and $H = -\Delta +V(x)$.
With no additional assumption to the potential function, we can not find $\gamma_N$ to shrink the region where $i[V(x),\gamma_N]$ is negative as we did in previous situations.
In general one uses the positive commutator methods based on the Mourre estimate in this case, which applies at localized energies away from thresholds ($0,\infty$, in our case).
See e.g.\cite{sigal1988local,hunziker2000quantum,soffer2011monotonic,amrein1996commutator,donninger2012pointwise} and cited references.
However, we can still get positive commutator in the high energy case and close to zero energy case (in higher dimensions).

\subsection{High Energy}

\begin{thm}\label{thm:high-energy}
Suppose $V(x) \geq 0$ is compactly supported and smooth, and $H = -\Delta + V(x)$.
Let $\gamma = i[-\Delta, F(x)]$, and $1/2 < \sigma < 4(n-2)^2$;
then there exists $K_0$ such that $P_K(H) i[H,\gamma] P_K(H)$ are positive operators for all $K \geq K_0$.
To be precise, for any $0< \epsilon < 4 - \sigma/(n-2)^2$, there exists $K_{\epsilon}$, such that for any $K\geq K_{\epsilon}$,
\begin{equation}
P_K (H) i[H,\gamma] P_K(H) \geq (4-\frac{\sigma}{(n-2)^2} -\epsilon) P_K(H) \frac{1}{\langle x \rangle^{\sigma}} H \frac{1}{\langle x \rangle^{\sigma}} P_K(H).
\end{equation}
\end{thm}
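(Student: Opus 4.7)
The plan is to carry out the same high-frequency absorption as in Theorem \ref{thm:control}, but with the spectral cutoff now taken with respect to $H$ rather than $-\Delta$. First, I decompose $i[H,\gamma] = i[-\Delta,\gamma] + i[V,\gamma]$. By Proposition \ref{prop:Delta},
$$i[-\Delta,\gamma] \geq \Bigl(4 - \tfrac{\sigma}{(n-2)^2}\Bigr)\langle x\rangle^{-\sigma}(-\Delta)\langle x\rangle^{-\sigma},$$
while by (C2), $i[V,\gamma] = -2\,\nabla F \cdot \nabla V$ is a bounded, compactly supported multiplication operator (since $|\nabla F|\leq f(\infty)$ and $V$ is smooth with compact support). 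Using $\langle x\rangle^{-\sigma} H \langle x\rangle^{-\sigma} = \langle x\rangle^{-\sigma}(-\Delta)\langle x\rangle^{-\sigma} + V/\langle x\rangle^{2\sigma}$, the target inequality is equivalent to
$$P_K(H)\Bigl[\epsilon\,\langle x\rangle^{-\sigma}(-\Delta)\langle x\rangle^{-\sigma} - \widetilde W(x)\Bigr]P_K(H) \geq 0,$$
where $\widetilde W(x) = \bigl(4 - \tfrac{\sigma}{(n-2)^2} - \epsilon\bigr)V/\langle x\rangle^{2\sigma} + 2\,\nabla F \cdot \nabla V$ is bounded and compactly supported.

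Next, I factor $\widetilde W = \langle x\rangle^{-\sigma} U(x) \langle x\rangle^{-\sigma}$ with $U = \langle x\rangle^{2\sigma}\widetilde W$ bounded and compactly supported, so
$$\epsilon\,\langle x\rangle^{-\sigma}(-\Delta)\langle x\rangle^{-\sigma} - \widetilde W(x) = \langle x\rangle^{-\sigma}\bigl[\epsilon(-\Delta) - U(x)\bigr]\langle x\rangle^{-\sigma}.$$
I then commute $\langle x\rangle^{-\sigma}$ through $P_K(H)$ via the Helffer-Sj\"ostrand functional calculus: since $[H,\langle x\rangle^{-\sigma}] = [-\Delta,\langle x\rangle^{-\sigma}]$ is a bounded first-order differential operator and the smoothed cutoff $P(\lambda/K)$ has $n$-th derivative of size $O(K^{-n})$, the commutator $[\langle x\rangle^{-\sigma},P_K(H)]$ is $O(K^{-1/2})$ in operator norm and can be absorbed into the positive main term for $K$ sufficiently large. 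This reduces matters to
$$\langle x\rangle^{-\sigma} P_K(H)\bigl[\epsilon(-\Delta) - U(x)\bigr]P_K(H)\langle x\rangle^{-\sigma} \geq 0,$$
which, since $\langle x\rangle^{-\sigma}$ has dense range, is equivalent on the form domain to $P_K(H)\bigl[\epsilon(-\Delta) - U(x)\bigr]P_K(H) \geq 0$.

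Finally, for any $\phi$ in the range of $P_K(H)$ lying in the form domain of $H$, the support of $P(\lambda/K)$ in $\{\lambda \geq K\}$ gives $(\phi,H\phi) \geq K\|\phi\|^2$, hence $(\phi,(-\Delta)\phi) \geq (K - \|V\|_\infty)\|\phi\|^2$, while $(\phi,U(x)\phi) \leq \|U\|_\infty\|\phi\|^2$; choosing $K \geq K_\epsilon := \|V\|_\infty + \|U\|_\infty/\epsilon$ yields the desired positivity. The main obstacle is the Helffer-Sj\"ostrand commutator bound in the second paragraph: unlike Theorem \ref{thm:control}, where the cutoff $P_K(-\Delta)$ acts on the same Fourier variable as the main kinetic term, here the cutoff uses the full operator $H$, so controlling $[\langle x\rangle^{-\sigma},P_K(H)]$ requires an almost-analytic extension argument; once this commutator estimate is established, the rest of the argument closes routinely.
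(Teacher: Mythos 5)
Your overall strategy is the same as the paper's: decompose $i[H,\gamma]$, isolate the potential contributions into a bounded compactly supported $\widetilde W$, commute the weight $\langle x\rangle^{-\sigma}$ through the spectral cutoff $P_K(H)$, and then exploit $P_K(H)\,H\,P_K(H)\geq K\,P_K(H)^2$ for the high-energy absorption. The final positivity step (your paragraph three, using $(\phi,(-\Delta)\phi)\geq (K-\|V\|_\infty)\|\phi\|^2$ on $\mathrm{Ran}\,P_K(H)$) is correct and matches the paper's use of $P_K(H)HP_K(H)\geq KP_K^2(H)$.

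The genuine gap is in the absorption of the commutator error, which you rightly flag as ``the main obstacle'' but do not actually close. A bare operator-norm bound $\|[\langle x\rangle^{-\sigma},P_K(H)]\|=O(K^{-1/2})$ is not enough. Writing $B=[\langle x\rangle^{-\sigma},P_K(H)]$, the cross terms after commuting are of the form $\langle x\rangle^{-\sigma}P_K(H)\bigl(\epsilon(-\Delta)-U\bigr)B$ and its adjoint. Since $(-\Delta)B$ has operator norm $O(\sqrt{K})$ (not $O(K^{-1/2})$; the differential operator gains back a factor $K$), the error is of size $O(\sqrt{K})$. The main positive term $\langle x\rangle^{-\sigma}P_K\bigl(\epsilon(-\Delta)-U\bigr)P_K\langle x\rangle^{-\sigma}$ is bounded below by roughly $K\,\langle x\rangle^{-\sigma}P_K^2\langle x\rangle^{-\sigma}$, which is a \emph{weighted} nonnegative operator: it can be arbitrarily small on states concentrated far from the origin, so it cannot dominate an arbitrary error of operator norm $O(\sqrt{K})$. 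To close the argument one needs the error terms themselves to carry the $\langle x\rangle^{-\sigma}$ weight symmetrically, i.e.\ to be of the form $\sqrt{K}\,\langle x\rangle^{-\sigma}P_K\,O(1)\,P_K\langle x\rangle^{-\sigma}$, and only \emph{then} is the $K$ main term bigger than the $\sqrt{K}$ error for $K$ large. This is exactly what the paper's Proposition \ref{commutation} supplies via the commutator expansion: not only $[\langle x\rangle^{-\sigma},P_K(H)]=\langle x\rangle^{-\sigma-1}K^{-1/2}O(1)$, but also the crucial bound $H[\langle x\rangle^{-\sigma},P_K(H)]=\langle x\rangle^{-\sigma-1}\sqrt{K}\,O(1)$ and $H\langle x\rangle^{-\sigma}[\langle x\rangle^{\sigma},P_K(H)]=\sqrt{K}\,O(1)$, together with a careful sandwiching by $P_{3K}(H)$, so that the weights and the $H$-factors end up where they must. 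Your Helffer--Sj\"ostrand citation points at the right machinery, but unless you derive these weighted, $H$-controlled estimates (the content of the paper's Proposition \ref{prop:high-energy}), the absorption step does not go through.
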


\begin{proof}
We already know that
\begin{equation}
\begin{split}
i[H,\gamma] \geq & -(4-\frac{\sigma}{(n-2)^2}) \frac{1}{\langle x \rangle^{\sigma}} \Delta \frac{1}{\langle x \rangle^{\sigma}} -2 \nabla F \cdot \nabla V \\
= & (4-\frac{\sigma}{(n-2)^2}) \frac{1}{\langle x \rangle^{\sigma}} H \frac{1}{\langle x \rangle^{\sigma}}
-(4-\frac{\sigma}{(n-2)^2}) \frac{1}{\langle x \rangle^{\sigma}} V(x) \frac{1}{\langle x \rangle^{\sigma}} -2 \nabla F \cdot \nabla V.
\end{split}
\end{equation}
Let $W(x) = (4-\frac{\sigma}{(n-2)^2}) V(x) + 2 \langle x \rangle^{2\sigma} \nabla F \cdot \nabla V$, so we only have to prove
\begin{equation}
\epsilon P_K(H) \frac{1}{\langle x \rangle^{\sigma}} H \frac{1}{\langle x \rangle^{\sigma}} P_K(H)
\geq P_K(H) \frac{1}{\langle x \rangle^{\sigma}} W(x) \frac{1}{\langle x \rangle^{\sigma}} P_K(H),
\end{equation}
for all $K$ greater than some constant $K_{\epsilon}$. We will need the following proposition to estimate the left hand side.

\begin{prop} \label{prop:high-energy}
Under the same assumptions as in Theorem (\ref{thm:high-energy}).
For $K$ large enough, we have the following estimate
\begin{equation}
P_K(H) \frac{1}{\langle x \rangle^{\sigma}} H \frac{1}{\langle x \rangle^{\sigma}} P_K(H)
\geq C K P_K(H) \frac{1}{\langle x \rangle^{\sigma}} \frac{1}{\langle x \rangle^{\sigma}} P_K(H),
\end{equation}
for some constant $C>0$.
\end{prop}

Theorem (\ref{thm:high-energy}) follows easily from the Proposition (\ref{prop:high-energy}), since $W(x)$ is a bounded function.
\end{proof}

To prove Proposition (\ref{prop:high-energy}), the idea is to commute $P_K(H)$ through $1/\langle x \rangle^{\sigma}$ and use the fact that $P_K(H) H P_K(H) \geq K P_K^2(H)$. After that, we need to commute $P_K(H)$ through $1/\langle x \rangle^{\sigma}$ again to the outside. So we have to control the error terms come from commuting $P_K(H)$ and $1/\langle x \rangle^{\sigma}$.

Using commutator expansion lemma, we get
\begin{equation}
\begin{split}
&[\langle x \rangle^{\sigma}, P_K(H)] \\
=& P'_K(H) \adj_{H}^{(1)} (\langle x \rangle^{\sigma})
+ \frac{1}{2!} P''_K(H) \adj_{H}^{(2)}(\langle x \rangle^{\sigma}) + \cdots
+ \frac{1}{m!} P^{(m)}_K(H) \adj_{H}^{(m)}(\langle x \rangle^{\sigma}) + R_{m+1},
\end{split}
\end{equation}
or
\begin{equation}
\begin{split}
& [\langle x \rangle^{\sigma}, P_K(H)] \\
=& \adj_{H}^{(1)}(\langle x \rangle^{\sigma}) P'_K(H)
+ \frac{1}{2!} \adj_{H}^{(2)}(\langle x \rangle^{\sigma}) P''_K(H) + \cdots
+ \frac{1}{m!} \adj_{H}^{(m)}(\langle x \rangle^{\sigma}) P^{(m)}_K(H) + \widetilde{R}_{m+1}.
\end{split}
\end{equation}
Here the remainder term $R_m$ is given by:
\begin{equation}
R_m = i^m \int_{-\infty}^{\infty} \widehat{P_K}(s_1) e^{is_1H} \ud s_1
\int_0^{s_1} \ud s_2 \int_0^{s_2} \cdots \ud s_m \int_0^{s_m} e^{-ipH} \adj_{H}^{(m)}(\langle x \rangle^{\sigma}) e^{ipH} \ud p.
\end{equation}
We have similar expression for $\widetilde{R}_m$ which we omit here.

\begin{prop} \label{commutation}
Let $m \geq \max \{4,\sigma\}$, then the following estimates hold:
\begin{enumerate}
\item $\adj_{H}^{(j)}(\langle x \rangle^{\sigma}) P^{(j)}_K(H) = \langle x \rangle^{\sigma - j} \frac{1}{\sqrt{K}^j} O(1)$, for $j=1,2, \cdots,m-1$.
\item $ (H+1) R_m =  \frac{1}{K} O(1)$.
\item $[\langle x \rangle^{\sigma}, P_K(H)] = \langle x \rangle^{\sigma-1} \frac{1}{\sqrt{K}} O(1)$, $H [\langle x \rangle^{\sigma}, P_K(H)] = \langle x \rangle^{\sigma - 1} \sqrt{K} O(1)$.
\item $[\langle x \rangle^{-\sigma}, P_K(H)] = \langle x \rangle^{-\sigma-1} \frac{1}{\sqrt{K}} O(1)$, $H [\langle x \rangle^{-\sigma}, P_K(H)] = \langle x \rangle^{-\sigma - 1} \sqrt{K} O(1)$.
\item $H \langle x \rangle^{-\sigma} [\langle x \rangle^{\sigma}, P_K(H)] = \sqrt{K} O(1)$.
\end{enumerate}
\end{prop}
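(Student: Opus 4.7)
The plan is to combine the two commutator-expansion formulas displayed immediately above the proposition with two ingredients: a symbolic description of the iterated commutators $\adj_H^{(j)}(\langle x \rangle^\sigma)$ as differential operators of order $j$ with coefficients of size $\langle x \rangle^{\sigma - j}$, and the frequency-localization estimate $\|p_k P_K^{(j)}(H)\| \lesssim K^{1/2 - j}$. Items (1) and (2) are the core estimates; items (3)--(5) then follow by applying the expansion with $m = 1$ and absorbing the small remainder.

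For item (1), I would show by induction on $j$ that $\adj_H^{(j)}(\langle x \rangle^\sigma)$ is a differential operator of order at most $j$ with coefficients pointwise dominated by $\langle x \rangle^{\sigma - j}$. The base case $j=1$ uses that $V$ commutes with multiplication by $\langle x \rangle^\sigma$, giving $\adj_H(\langle x \rangle^\sigma) = [-\Delta, \langle x \rangle^\sigma] = -2\nabla\langle x \rangle^\sigma \cdot \nabla - \Delta\langle x \rangle^\sigma$. Each subsequent application of $\adj_H$ either differentiates a coefficient or commutes a momentum factor past one, lowering the $\langle x \rangle$-exponent by one each time. The frequency bound $\|p_k P_K^{(j)}(H)\| \lesssim K^{1/2 - j}$ follows from $p^2 = H - V$, boundedness of $V$, and the spectral support of $P_K^{(j)}(H)$ in $[K, 3K]$ with $L^\infty$-norm $\lesssim K^{-j}$. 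Distributing up to $j$ momenta against $P_K^{(j)}(H)$ then produces the factor $K^{j/2} \cdot K^{-j} = K^{-j/2}$ multiplied by the $\langle x \rangle^{\sigma - j}$ coefficient, yielding (1).

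For item (2), I would bound $(H+1) R_m$ using the given integral formula. Since $H+1$ commutes with $e^{isH}$, it can be pushed past every propagator and effectively replaces $\widehat{P_K}(s_1)$ by $\widehat{(\lambda+1)P_K}(s_1)$. By rescaling $\tau = K s_1$, one has $\int |s_1|^m \, |\widehat{(\lambda+1)P_K}(s_1)|\, ds_1 \lesssim K^{1-m}$, while the sandwiched operator $e^{-ipH} \adj_H^{(m)}(\langle x \rangle^\sigma) e^{ipH}$ contributes at most $K^{m/2}$ in operator norm against the frequency-localized range of $P_K(H)$, provided $m \geq \sigma$ so that the coefficient $\langle x \rangle^{\sigma - m}$ is uniformly bounded. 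Taking $m \geq 4$ makes the net factor $K^{1 - m/2} \leq K^{-1}$, which is exactly (2).

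Items (3)--(5) then follow from taking $m = 1$ in the appropriate expansion and absorbing the remainder via (2). For (3), $[\langle x \rangle^\sigma, P_K(H)] = \adj_H^{(1)}(\langle x \rangle^\sigma) P'_K(H) + \widetilde{R}_2$; item (1) bounds the leading term by $\langle x \rangle^{\sigma-1} K^{-1/2} O(1)$, and the remainder is absorbed by extending the expansion to order $m \geq \max\{4,\sigma\}$ and invoking (2). The second half of (3) multiplies by $H$ on the left and uses $\|H P'_K(H)\| \lesssim 1$, since $H \sim K$ on the support of $P'_K$, producing the extra $\sqrt{K}$. Item (4) is structurally identical with $-\sigma$ replacing $\sigma$ in the base commutator. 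For (5), the prefactor $\langle x \rangle^{-\sigma}$ combines with the $\langle x \rangle^{\sigma-1}$ from the leading term to give the bounded multiplier $\langle x \rangle^{-1}$; commuting $H$ past $\langle x \rangle^{-\sigma}$ costs only a lower-order commutator of similar or smaller size, so the end product is $\sqrt{K} \, O(1)$ as claimed. The main obstacle is the remainder analysis of (2): one has to verify carefully that $(H+1)$ (or a positive power of $H$) can be absorbed inside the nested time integral and that the unbounded differential operator $\adj_H^{(m)}(\langle x \rangle^\sigma)$ sandwiched between propagators still admits operator-norm bounds on the relevant frequency range, which is precisely why the hypothesis $m \geq \max\{4, \sigma\}$ is imposed.
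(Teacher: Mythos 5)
Your outline for items (1), (3), (4), (5) matches the paper's strategy: an inductive description of $\adj_H^{(j)}(\langle x\rangle^\sigma)$ as an order-$j$ differential operator with coefficients of size $\langle x\rangle^{\sigma-j}$, the spectral bound $\| p_k\, P_K^{(j)}(H)\|\lesssim K^{1/2-j}$ from $\supp P_K^{(j)} \subset [K,3K]$ and $\|P_K^{(j)}\|_\infty \lesssim K^{-j}$, and then reading (3)--(5) off the full commutator expansion together with (1) and (2). That part is fine.

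The gap is in item (2), and it is the step you yourself flag as the ``main obstacle.'' The proposal to push $(H+1)$ onto the outer propagator and thereby ``effectively replace $\widehat{P_K}$ by $\widehat{(\lambda+1)P_K}$'' fails for two reasons. First, $(\lambda+1)P_K(\lambda)$ is not compactly supported ($P_K\equiv 1$ on $[3K,\infty)$), so $\widehat{(\lambda+1)P_K}$ is a tempered distribution, not an $L^1$ function, and $\int |s_1|^m\,|\widehat{(\lambda+1)P_K}(s_1)|\,\ud s_1$ has no meaning; moreover differentiating in $s_1$ also hits the nested integral, so the substitution is not even formally correct. Second, the assertion that $e^{-ipH}\adj_H^{(m)}(\langle x\rangle^\sigma)e^{ipH}$ has operator norm $\lesssim K^{m/2}$ ``against the frequency-localized range of $P_K(H)$'' is unjustified: inside the $s_2,\dots,s_m,p$ integrals there is no spectral localization, and the conjugated operator is a genuinely unbounded differential operator of order $m$. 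The paper circumvents both problems with a concrete device: it writes $e^{is_1H}=(i(H+1))^{-(m-1)}(\partial_{s_1}+i)^{m-1}e^{is_1H}$, so one power of $(H+1)$ cancels the left factor of $(H+1)R_m$ and the remaining $(H+1)^{-(m-2)}$ renders $(H+1)^{-(m-2)}\adj_H^{(m)}(\langle x\rangle^\sigma)$ \emph{bounded} --- this is precisely why $m\ge 4$ (so $2(m-2)\ge m$) and $m\ge\sigma$ (so the coefficient $\langle x\rangle^{\sigma-m}$ is bounded) are needed. Integrating $(\partial_{s_1}+i)^{m-1}$ by parts against $\widehat{P_K}(s_1)$ and the nested $s$-integrals then produces only Fourier transforms $\widehat{x^jP_K^{(j+1)}}$ with $j\ge 0$, all of \emph{compactly supported} functions, and the scaling identity $\int|\widehat{x^jP_K^{(j+1)}}(s)|\,\ud s=\tfrac1K\int|\widehat{x^jP^{(j+1)}}(s)|\,\ud s$ delivers the $1/K$. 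Replacing your $\widehat{(\lambda+1)P_K}$ step with this integration-by-parts scheme, and your frequency-localization heuristic with the boundedness of $(H+1)^{-(m-2)}\adj_H^{(m)}(\langle x\rangle^\sigma)$, closes the gap.
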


\begin{proof}
For the first estimate, we compute explicitly:
\begin{equation}\begin{split}
[\langle x \rangle^{\sigma}, H] &= [\langle x \rangle^{\sigma}, -\Delta] \\
&= a\sigma \frac{n + a(\sigma + n-2 )r^2}{1+ar^2} \langle x \rangle^{\sigma-2} + 2a\sigma \langle x \rangle^{\sigma-1} \frac{x_j}{\langle x \rangle} \partial_j.
\end{split}\end{equation}
Since $\partial_j$ are bounded by $\sqrt{H}$, $\supp P_K'$ is contained in the interval $[K, 3K]$ and $\| P_K'(H) \| \leq \frac{1}{K}$, we get the first estimate for $j=1$.

We can use similar argument to prove the first estimate for $j=2,\cdots, m-1$.

For the remainder term $R_m$, we write
\begin{equation}
 e^{isH} = \frac{1}{i(H + 1)} (\frac{\partial}{\partial s} + i) e^{isH} = \frac{1}{(i(H + 1))^{m-1}} (\frac{\partial}{\partial s} + i)^{m-1} e^{isH}.
\end{equation}
Use integration by parts, we then get
\begin{equation}\begin{split}
& \|  (H+1) R_m \| \\
= & \| \int_{-\infty}^{\infty} \widehat{P_K}(s_1) \frac{1}{(H + 1)^{m-2}} (\frac{\partial}{\partial s_1} + i)^{m-1} e^{is_1H} \ud s_1
\int_0^{s_1} \cdots \ud s_m \int_0^{s_m} e^{-ipH} \adj_{H}^{(m)}(\langle x \rangle^{\sigma}) e^{ipH} \ud p \| \\
\leq & C \| \frac{1}{(H+1)^{m-2}} \adj_{H}^{(m)}(\langle x \rangle^{\sigma})\| \sum_{j=0}^{m-1} \int |\widehat{x^j P^{(j+1)}_K}(s)| \ud s \\
= & \frac{C}{K} \| \frac{1}{(H+1)^{m-2}} \adj_{H}^{(m)}(\langle x \rangle^{\sigma})\| \sum_{j=0}^{m-1} \int |\widehat{x^j P^{(j+1)}}(s)| \ud s.
\end{split}\end{equation}
Here we used an identity
\begin{equation}
\int |\widehat{x^j P^{(j+1)}_K}(s)| \ud s = \int |\widehat{x^j P^{(j+1)}}(Ks)| \ud s = \frac{1}{K} \int |\widehat{x^j P^{(j+1)}}(s)| \ud s,
\end{equation}
and the fact that $\frac{1}{(H+1)^{m-2}} \adj_{H}^{(m)}(\langle x \rangle^{\sigma})$ is bounded.
Thus we have proved the second estimate.

The third estimate is the direct consequence of the first and second estimates, using commutator expansion lemma.

Using exactly the same method, we get the fourth estimate.

For the last one, we have
\begin{equation}\begin{split}
&H \frac{1}{\langle x \rangle^{\sigma}} [\langle x \rangle^{\sigma}, P_K(H)] \\
=& H [\langle x \rangle^{\sigma}, P_K(H)] \frac{1}{\langle x \rangle^{\sigma}} + H [ \frac{1}{\langle x \rangle^{\sigma}}, [\langle x \rangle^{\sigma}, P_K(H)]] \\
=& H [\langle x \rangle^{\sigma}, P_K(H)] \frac{1}{\langle x \rangle^{\sigma}} +H [P_K(H), \langle x \rangle^{\sigma}] \frac{1}{\langle x \rangle^{\sigma}} + H [P_K(H), \frac{1}{\langle x \rangle^{\sigma}}] \langle x \rangle^{\sigma}\\
=& \sqrt{K} O(1).
\end{split}\end{equation}
\end{proof}

\begin{proof}[Proof of Proposition (\ref{prop:high-energy})]
Since $P_K(x) \equiv 1$ on $[3K, \infty)$, we have $P_{3K}(x) = P_{3K}(x) P_K(x)$. Then we get
\begin{equation}
P_{3K}(H) \frac{1}{\langle x \rangle^{\sigma}} H \frac{1}{\langle x \rangle^{\sigma}} P_{3K}(H) = P_{3K}(H) P_K(H) \frac{1}{\langle x \rangle^{\sigma}} H \frac{1}{\langle x \rangle^{\sigma}} P_K(H) P_{3K}(H)
\end{equation}

Now commute $P_K(H)$ through $\langle x \rangle^{-\sigma}$:
\begin{equation}
\begin{split}
& P_K(H) \frac{1}{\langle x \rangle^{\sigma}} H \frac{1}{\langle x \rangle^{\sigma}} P_K(H) \\
= & \frac{1}{\langle x \rangle^{\sigma}} P_K(H) H P_K(H) \frac{1}{\langle x \rangle^{\sigma}} \\
& + [P_K(H), \frac{1}{\langle x \rangle^{\sigma}}] H \frac{1}{\langle x \rangle^{\sigma}} P_K(H) + \frac{1}{\langle x \rangle^{\sigma}} P_K(H) H [\frac{1}{\langle x \rangle^{\sigma}}, P_K(H)]
\end{split}
\end{equation}
We then use the fact that $P_K(H) H P_K(H) \geq K P_K^2(H)$ and get
\begin{equation}
\begin{split}
& P_K(H) \frac{1}{\langle x \rangle^{\sigma}} H \frac{1}{\langle x \rangle^{\sigma}} P_K(H) \\
\geq & \frac{1}{\langle x \rangle^{\sigma}} P_K(H) K P_K(H) \frac{1}{\langle x \rangle^{\sigma}} \\
& - \frac{1}{\langle x \rangle^{\sigma}} [P_K(H), \langle x \rangle^{\sigma}]\frac{1}{\langle x \rangle^{\sigma}} H \frac{1}{\langle x \rangle^{\sigma}} P_K(H) \\
& - \frac{1}{\langle x \rangle^{\sigma}} P_K(H) H \frac{1}{\langle x \rangle^{\sigma}}[\langle x \rangle^{\sigma}, P_K(H)]\frac{1}{\langle x \rangle^{\sigma}} \\
=& K P_K(H) \frac{1}{\langle x \rangle^{\sigma}} \frac{1}{\langle x \rangle^{\sigma}} P_K(H) \\
& + K \frac{1}{\langle x \rangle^{\sigma}} [P_K(H), \langle x \rangle^{\sigma}]\frac{1}{\langle x \rangle^{\sigma}} P_K(H) \frac{1}{\langle x \rangle^{\sigma}} \\
& + K P_K(H) \frac{1}{\langle x \rangle^{\sigma}} \frac{1}{\langle x \rangle^{\sigma}}[\langle x \rangle^{\sigma}, P_K(H)]\frac{1}{\langle x \rangle^{\sigma}} \\
& - \frac{1}{\langle x \rangle^{\sigma}} [P_K(H), \langle x \rangle^{\sigma}]\frac{1}{\langle x \rangle^{\sigma}} H \frac{1}{\langle x \rangle^{\sigma}} P_K(H) \\
& - \frac{1}{\langle x \rangle^{\sigma}} P_K(H) H \frac{1}{\langle x \rangle^{\sigma}}[\langle x \rangle^{\sigma}, P_K(H)]\frac{1}{\langle x \rangle^{\sigma}}.
\end{split}
\end{equation}

Sandwich the above inequality by $P_{3K}(H)$, and use the estimates in Proposition (\ref{commutation}), we get
\begin{equation}
\begin{split}
& P_{3K}(H) \frac{1}{\langle x \rangle^{\sigma}} H \frac{1}{\langle x \rangle^{\sigma}} P_{3K}(H) \\
\geq & K P_{3K}(H) \frac{1}{\langle x \rangle^{\sigma}} \frac{1}{\langle x \rangle^{\sigma}} P_{3K}(H)
+ \sqrt{K} P_{3K}(H) \frac{1}{\langle x \rangle^{\sigma}} O(1) \frac{1}{\langle x \rangle^{\sigma}} P_{3K}(H).
\end{split}
\end{equation}

We can choose $K$ sufficiently large, then the error term can be controlled by the main term. Thus we proved the proposition.
\end{proof}

\subsection{Low Energy}
For this case, we only consider the special case that $n\geq 5$ and $\sigma=1$.
\begin{thm}\label{thm:low-energy}
Suppose $V(x) \geq 0$ is compactly supported and $C^2$, and $H = -\Delta + V(x)$.
We define $\gamma = i[-\Delta, F(x)]$. Then there exists $\xi_0$ such that
$Q_{\xi}(H) i[H,\gamma] Q_{\xi}(H)$ are positive operators for all $0 < \xi \leq \xi_0$.
To be precise, for any $\epsilon \in (0, 1)$, there exists $\xi_{\epsilon}$, such that for any $\xi \in (0, \xi_{\epsilon}]$,
\begin{equation}
Q_{\xi} (H) i[H,\gamma] Q_{\xi} (H) \geq (1 -\epsilon) Q_{\xi}(H) i[-\Delta, \gamma] Q_{\xi}(H).
\end{equation}
Here $Q_{\xi}(\lambda) = Q(\lambda/\xi)$, with $Q(\lambda)$ is a fixed smoothed characteristic function of the interval $[0,1]$.
\end{thm}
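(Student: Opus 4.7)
The plan is to write $i[H,\gamma] = i[-\Delta,\gamma] + W$ with $W := i[V,\gamma] = -2\nabla F \cdot \nabla V$, a bounded function compactly supported in $B_{R_0}$ (where $R_0$ is the support radius of $V$), and then show
\[
Q_\xi(H)\, W\, Q_\xi(H) \;\geq\; -\epsilon\, Q_\xi(H)\, i[-\Delta,\gamma]\, Q_\xi(H)
\]
for $\xi$ small enough. I would split $W = W_1 + W_2$ with $W_1 = W\chi_{B_\delta}$ and $W_2 = W(1-\chi_{B_\delta})$ supported in the annulus $A_\delta := \{\delta \leq |x| \leq R_0\}$, where $\delta = \delta(\epsilon)$ is to be chosen, and treat the two pieces separately.

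For the near-origin piece $W_1$ I would use the pointwise form bound from Proposition \ref{prop:Delta}. With $\sigma = 1$ it gives $i[-\Delta,\gamma] \geq c_0/(|x|^2\langle x\rangle^2)$ for $c_0 = (4 - (n-2)^{-2})(n-2)^2/4 > 0$. Combined with the trivial estimate $\chi_{B_\delta}(x) \leq \delta^2(1+a\delta^2)/(|x|^2\langle x\rangle^2)$, this yields the form inequality $|W_1| \leq \frac{\|W\|_\infty\, \delta^2(1+a\delta^2)}{c_0}\, i[-\Delta,\gamma]$. Choosing $\delta = \delta(\epsilon, V)$ small enough that the prefactor is at most $\epsilon/2$ gives $W_1 \geq -(\epsilon/2)\, i[-\Delta,\gamma]$ as forms, so this contribution is handled uniformly in $\xi$.

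For the annular piece $W_2$ I would use low-energy compactness. Since $V \geq 0$ is compactly supported and $n \geq 5$, $H$ has no zero eigenvalue (any zero mode $\psi$ would satisfy $\|\nabla\psi\|^2 + (\psi,V\psi)=0$, forcing $\psi \equiv 0$) and no zero-energy resonance (a would-be resonance decays like $|x|^{2-n}$, which already lies in $L^2$ when $n \geq 5$, collapsing it to the previous case). Hence $Q_\xi(H) \to 0$ strongly as $\xi \to 0$. The operator $\chi_{A_\delta}(H+1)^{-1}$ is compact by Rellich--Kondrachov (a compactly supported multiplier composed with elliptic smoothing), and $(H+1)Q_\xi(H)$ is uniformly bounded and strongly convergent to $0$; the factorisation $\chi_{A_\delta} Q_\xi(H) = [\chi_{A_\delta}(H+1)^{-1}]\cdot[(H+1)Q_\xi(H)]$ is then a compact operator composed with a strongly-null sequence, hence tends to $0$ in operator norm.

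The main obstacle is converting this operator-norm smallness of $\chi_{A_\delta} Q_\xi(H)$ into the form inequality against $Q_\xi(H)\, i[-\Delta,\gamma]\, Q_\xi(H)$, since the right-hand side can itself be small on states supported far from the origin. Here the assumption $n \geq 5$ reappears quantitatively: for a low-energy state of characteristic wavelength $\xi^{-1/2}$, the $L^2$-mass inside the bounded region $A_\delta$ scales like $\xi^{n/2}\|\phi\|^2$, while the Hardy-type lower bound $(\phi, |x|^{-2}\langle x\rangle^{-2}\phi)$ scales like $\xi^{2}\|\phi\|^2$, so their ratio is $O(\xi^{(n-4)/2}) \to 0$ precisely when $n \geq 5$. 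I would make this rigorous by combining the refined pointwise terms in Proposition \ref{prop:Delta} with a weighted Sobolev--Hardy estimate and the low-energy resolvent expansion of $H$ (both available because of the absence of a zero-energy obstruction) to obtain $(\phi, W_2 \phi) \geq -(\epsilon/2)(\phi, i[-\Delta,\gamma]\phi)$ on the range of $Q_\xi(H)$ for $\xi \leq \xi_\epsilon$; together with the $W_1$ estimate this closes the proof.
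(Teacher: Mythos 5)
Your proposal shares the paper's broad strategy: split $i[H,\gamma]=i[-\Delta,\gamma]+i[V,\gamma]$ and absorb $i[V,\gamma]$ by showing that low-energy states have too little $L^2$ mass inside the (bounded) support of $\nabla V$ relative to the weighted Hardy quantity provided by $i[-\Delta,\gamma]$. Your $W_1$ step is correct and clean, and your identification of the role of $n\geq 5$ (via the $\xi^{(n-4)/2}$ scaling mismatch) and of the absence of zero eigenvalue/resonance is a nice complement to the paper, which instead works directly from $C_1(-\Delta)\leq H\leq C_2(-\Delta)$ (these hold automatically when $V\geq 0$ is compactly supported, so the paper needs no spectral hypothesis at zero).

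However, the crucial quantitative step is only announced, not carried out, and this is precisely where the real work lies. Your compactness argument yields $\|\chi_{A_\delta} Q_\xi(H)\|\to 0$, which is a statement relative to the full $L^2$ norm of $Q_\xi\phi$; but the inequality you need is
\[
\|\chi_{A_\delta} Q_\xi\phi\|\;\leq\;\eta(\xi)\,\|\langle x\rangle^{-2}Q_\xi\phi\|,\qquad \eta(\xi)\to 0,
\]
i.e.\ smallness relative to the much smaller weighted norm. Your scaling heuristic (``$\xi^{n/2}$ vs.\ $\xi^{2}$'') proves nothing here, because the alleged $\xi^{2}$ scaling of $\|\langle x\rangle^{-2}Q_\xi\phi\|^{2}$ is only an \emph{upper} bound (via WHLS, $\langle x\rangle^{-4}\lesssim H^{2}$); there is no matching lower bound on the range of $Q_\xi(H)$, since a low-energy wave packet concentrated far out at $|x|\sim R$ has $\|\langle x\rangle^{-2}\psi\|^{2}\sim R^{-4}\|\psi\|^{2}$, arbitrarily small. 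One must therefore bound the \emph{ratio} directly, which is exactly what the paper does by establishing the operator-norm estimate $\|\langle x\rangle^{2}Q_{3\xi}\,U(x)\,Q_{3\xi}\langle x\rangle^{2}\|=\xi^{2\alpha}O(1)$ for $0<\alpha<\min\{n/4-1,\,1/2\}$ (Lemma~\ref{lemma:low-energy}). That estimate is the technical heart of the theorem and requires the full apparatus of Lemma~\ref{lemma:wHLS} (weighted HLS for $H^{-(1+\alpha)}\langle x\rangle^{-2(1+\alpha)}$), Proposition~\ref{prop:low-energy}, and the commutator-expansion remainder bounds of Lemma~\ref{lemma:low-energy-remainder}. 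Your sentence ``I would make this rigorous by combining the refined pointwise terms in Proposition~\ref{prop:Delta} with a weighted Sobolev--Hardy estimate and the low-energy resolvent expansion'' is a plan for doing that work, not a proof of it, and the compactness observation alone does not get you there. Until this step is carried out, the argument has a genuine gap.
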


\begin{proof}
We already know that
\begin{equation}
\begin{split}
i[H,\gamma] = & (1-\epsilon)i[-\Delta, \gamma] + i[-\epsilon\Delta, \gamma] + i[V,\gamma]\\
\geq & (1-\epsilon)i[-\Delta, \gamma] + \{ -(4-\frac{\sigma}{(n-2)^2}) \epsilon \frac{1}{\langle x \rangle^{\sigma}} \Delta \frac{1}{\langle x \rangle^{\sigma}} -2 \nabla F \cdot \nabla V \} \\
\geq & (1-\epsilon)i[-\Delta, \gamma] 
+ \{ (4-\frac{\sigma}{(n-2)^2}) \epsilon \frac{(n-2)^2}{4} \frac{1}{\langle x \rangle^{\sigma}} \frac{a}{\langle x \rangle^{2}} \frac{1}{\langle x \rangle^{\sigma}} 
-2 \nabla F \cdot \nabla V \}\\
= & (1-\epsilon)i[-\Delta, \gamma] 
+ \{ ((n-2)^2 - \frac{\sigma}{4}) a\epsilon \frac{1}{\langle x \rangle^{2}}  \frac{1}{\langle x \rangle^{2}} 
-2 \nabla F \cdot \nabla V\}.
\end{split}
\end{equation}

Let $U(x) = 2 \nabla F \cdot \nabla V$, $C = ((n-2)^2 - \frac{\sigma}{4}) a$, and sandwich the above inequality by $Q_{\xi}(H)$ (we use $Q_{\xi}$ for short):
\begin{equation}
\begin{split}
& Q_{\xi} (i[H,\gamma]) Q_{\xi} \\
\geq & (1 -\epsilon) Q_{\xi} i[-\Delta, \gamma] Q_{\xi}
+ \{ C\epsilon Q_{\xi}\frac{1}{\langle x \rangle^{2}}  \frac{1}{\langle x \rangle^{2}}Q_{\xi} 
- Q_{\xi} U(x) Q_{\xi} \} \\
= & (1 -\epsilon) Q_{\xi} i[-\Delta, \gamma] Q_{\xi} 
+ Q_{\xi}\frac{1}{\langle x \rangle^{2}} \Bigl\{ C\epsilon - \langle x \rangle^{2} Q_{3\xi} U(x) Q_{3\xi} \langle x \rangle^{2} \Bigr\} \frac{1}{\langle x \rangle^{2}}Q_{\xi}.
\end{split}
\end{equation}
Then we only have to prove there exists $\xi_{\epsilon} >0$,
\begin{equation}
\| \langle x \rangle^{2} Q_{3\xi} U(x) Q_{3\xi} \langle x \rangle^{2} \| \leq C \epsilon,
\end{equation}
for all $0 < \xi < \xi_{\epsilon}$. That means all we have to prove is
\begin{equation}
 \lim_{\xi \to 0} \| \langle x \rangle^{2} Q_{3\xi} U(x) Q_{3\xi} \langle x \rangle^{2} \| = 0.
\end{equation}
In fact, we prove the following stronger estimate:
\begin{lemma} \label{lemma:low-energy}
Let $\alpha_0 = \min \{(n/4-1), 1/2 \}$, for any $0 < \alpha < \alpha_0$,
\begin{equation}
\| \langle x \rangle^{2} Q_{3\xi} U(x) Q_{3\xi} \langle x \rangle^{2} \| = \xi^{2\alpha} O(1).
\end{equation}
\end{lemma}

The Theorem (\ref{thm:low-energy}) then follows from the Lemma (\ref{lemma:low-energy}).
\end{proof}

To prove the Lemma (\ref{lemma:low-energy}), we need to commute $\langle x \rangle^{2}$ with $Q_{3\xi}$:
\begin{equation}
\begin{split}
 & \langle x \rangle^{2} Q_{3\xi} U(x) Q_{3\xi} \langle x \rangle^{2} \\
 =& Q_{3\xi} \langle x \rangle^{2}  U(x)  \langle x \rangle^{2} Q_{3\xi} +
 [\langle x \rangle^{2}, Q_{3\xi}] U(x)  \langle x \rangle^{2} Q_{3\xi} \\
 & + Q_{3\xi} \langle x \rangle^{2}  U(x) [Q_{3\xi}, \langle x \rangle^{2}] +
 [\langle x \rangle^{2}, Q_{3\xi}] U(x) [Q_{3\xi}, \langle x \rangle^{2}].
\end{split}
\end{equation}
And all we have to prove is the following estimates:
\begin{prop}\label{prop:low-energy}
 Let $\alpha_0 = \min \{(n/4-1), 1/2 \}$, for any $0 < \alpha < \alpha_0$,
\begin{gather}
\label{eqn:low-energy-1} \| Q_{\xi}  \chi_{U}(x)\| = \xi^{1+\alpha} O(1), \\
\label{eqn:low-energy-2} \| [x_j, Q_{\xi}]  \chi_{U}(x)\| = \xi^{1/2 + \alpha} O(1), ~~j=1,\dots,n, \\
\label{eqn:low-energy-3} \| [\langle x \rangle^{2}, Q_{\xi}] \chi_{U}(x) \| = \xi^{\alpha} O(1).
\end{gather}
Here $\chi_{U}(x)$ is the smoothed characteristic function of $U(x)$ with compact support.
\end{prop}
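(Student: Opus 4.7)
The three bounds reduce to a single free-operator estimate on $Q_\xi(-\Delta)\chi_U$, transferred to $H=-\Delta+V$ by a resolvent expansion; the commutators with $x_j$ and $\langle x\rangle^{2}$ are then handled by direct kernel manipulations. The integral kernel of $Q(-\Delta/\xi)$ has the scaling form $K_\xi(x-y)=\xi^{n/2}\widetilde K(\sqrt\xi\,(x-y))$ for a fixed Schwartz function $\widetilde K$, so a Hilbert--Schmidt computation gives $\|Q_\xi(-\Delta)\chi_U\|_{HS}\lesssim\xi^{n/4}\|\chi_U\|_{2}$. The commutator kernels $(x_j-y_j)K_\xi(x-y)\chi_U(y)$ and $|x-y|^{2}K_\xi(x-y)\chi_U(y)$, obtained after moving $y_j$ and $|y|^{2}$ out to recombine with $Q_\xi(x_j\chi_U)$ and $Q_\xi(\langle x\rangle^{2}\chi_U)$, pick up extra factors of $\xi^{-1/2}$ and $\xi^{-1}$ from differentiating the scaling relation, producing free-case HS exponents $\xi^{n/4-1/2}$ and $\xi^{n/4-1}$. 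For $n\ge 5$ each of $n/4$, $n/4-1/2$, $n/4-1$ exceeds the respective target $1+\alpha$, $1/2+\alpha$, $\alpha$ for any $\alpha<\alpha_{0}=\min\{n/4-1,\,1/2\}$.

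To pass from $-\Delta$ to $H$, I would use an almost-analytic extension $\widetilde Q_\xi$ of $Q_\xi$ and the resolvent identity inside the Helffer--Sj\"ostrand formula,
\begin{equation*}
Q_\xi(H)\chi_U-Q_\xi(-\Delta)\chi_U
=-\frac{1}{\pi}\int \partial_{\bar z}\widetilde Q_\xi(z)\,(-\Delta-z)^{-1}V(H-z)^{-1}\chi_U\,\ud z\,\ud\bar z,
\end{equation*}
along with the analogous identities after commuting $x_j$ or $\langle x\rangle^{2}$ past the resolvents via $[x_j,(H-z)^{-1}]=-2(H-z)^{-1}\partial_j(H-z)^{-1}$ and its counterpart for $\langle x\rangle^{2}$. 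After rescaling $z=\xi w$, the factor $\partial_{\bar z}\widetilde Q_\xi$ carries the $\xi$-scaling while $(-\Delta-z)^{-1}V\chi_U$ is uniformly $L^{2}$-bounded as $z\to 0$ precisely when $n\ge 5$: the Green kernel $|x|^{-(n-2)}$ is then square-integrable at infinity against the compactly supported source $V\chi_U$. Non-negativity of $V$ precludes a zero eigenvalue of $H$, so $(H-z)^{-1}$ also stays bounded near threshold, and the correction inherits the same $\xi$-power as the free contribution for each of (\ref{eqn:low-energy-1})--(\ref{eqn:low-energy-3}).

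The main obstacle is exactly this low-energy transfer step: uniform boundedness of the weighted resolvent $(H-z)^{-1}\chi_U$ as $z\to 0$ is what forces both $n\ge 5$ and the $1/2$-cap in $\alpha_0$, and a zero-energy resonance or eigenvalue of $H$ would cause the argument to collapse. A secondary subtlety is the choice of almost-analytic extension: to extract the sharp $\xi^{1+\alpha}$ rather than a naive $\xi$, $\widetilde Q_\xi$ must vanish to sufficient order on the real axis so that the corresponding gain can be tracked across the full admissible $\alpha$-range.
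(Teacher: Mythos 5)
Your free-case kernel scaling is correct: $Q_{\xi}(-\Delta)\chi_U$ is Hilbert--Schmidt with norm $\sim\xi^{n/4}$, and the moment-weighted kernels similarly give $\xi^{n/4-1/2}$ and $\xi^{n/4-1}$, all of which beat the targets when $n\geq5$. But the transfer from $-\Delta$ to $H$ is where the argument breaks. After rescaling $z=\xi w$ in the Helffer--Sj\"ostrand correction, the measure $\partial_{\bar z}\widetilde Q_{\xi}(z)\,\ud z\,\ud\bar z$ contributes exactly one power of $\xi$, no more: $\partial_{\bar z}\widetilde Q_{\xi}(z)=\xi^{-1}(\partial_{\bar w}\widetilde Q)(z/\xi)$ and $\ud z\,\ud\bar z=\xi^{2}\ud w\,\ud\bar w$. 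Mere \emph{uniform boundedness} of the weighted resolvent $(-\Delta-z)^{-1}V(H-z)^{-1}\chi_U$ therefore yields $\|Q_{\xi}(H)\chi_U-Q_{\xi}(-\Delta)\chi_U\|=O(\xi)$, which is strictly worse than $\xi^{1+\alpha}$ and in fact also worse than the free contribution $\xi^{n/4}$. Increasing the vanishing order of the almost-analytic extension cannot rescue this: the integral $\int|\partial_{\bar z}\widetilde Q_{\xi}|\,\ud z\,\ud\bar z\sim\xi$ is independent of the vanishing order once there is no $|\mathrm{Im}\,z|^{-k}$ singularity to dominate, and here there is none because the weighted resolvent is already bounded.

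What is actually required is a \emph{decay} rate $\|(H-z)^{-1}H^{1+\alpha}\langle x\rangle^{-2(1+\alpha)}\|\lesssim 1$ uniformly near $z=0$, i.e.\ that $(H-z)^{-1}$ against a compactly supported weight decays like $|z|^{\alpha}$ as $z\to 0$. This is precisely the content of Lemma~\ref{lemma:wHLS}: $H^{-(1+\alpha)}\langle x\rangle^{-2(1+\alpha)}$ is bounded via the weighted Hardy--Littlewood--Sobolev inequality for $\alpha<\min\{n/4-1,\,1/2\}$. Once you have it, the paper's route is actually far shorter than yours: no free-vs-perturbed comparison is needed at all, since one just factors
\begin{equation*}
Q_{\xi}(H)\chi_U=\bigl(Q_{\xi}(H)H^{1+\alpha}\bigr)\bigl(H^{-(1+\alpha)}\langle x\rangle^{-2(1+\alpha)}\bigr)\bigl(\langle x\rangle^{2(1+\alpha)}\chi_U\bigr),
\end{equation*}
the first factor being $O(\xi^{1+\alpha})$ by the spectral theorem and the other two bounded. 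For the commutator estimates the paper expands $[x_j,Q_{\xi}(H)]$ and $[\langle x\rangle^{2},Q_{\xi}(H)]$ directly via the commutator expansion lemma and controls the remainders by Lemma~\ref{lemma:low-energy-remainder}, again feeding on the same WHLS input. So the missing piece in your proposal is not the almost-analytic extension but the fractional-power weighted resolvent bound; without it, the Helffer--Sj\"ostrand transfer stalls at $O(\xi)$.
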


\begin{proof}
For the first estimate, we write
\begin{equation}
 \| Q_{\xi}  \chi_{U}(x)\|
 = \| \Bigl( Q_{\xi} H^{1+\alpha} \Bigr) \Bigl(H^{-(1+\alpha)} \langle x \rangle^{-2(1+ \alpha)} \Bigr) \langle x \rangle^{2(1+ \alpha)} \chi_{U}(x)\| .
\end{equation}
We know from the weighted Hardy-Littlewood-Sobolev (WHLS) inequality that, for $0 \leq \kappa< \frac{n}{2}$, $|p|^{-\kappa} |x|^{-\kappa}$ is a bounded operator on $L^2(\mathbb{R}^n)$.
So if $H = -\Delta$, then equation (\ref{eqn:low-energy-1}) is a direct consequence of WHLS.
Now we have to prove $H^{-(1+\alpha)} \langle x \rangle^{-2(1+ \alpha)}$ is a bounded operator, for $H= -\Delta +V(x)$.
So equation (\ref{eqn:low-energy-1}) follows from the lemma:

\begin{lemma} \label{lemma:wHLS}
For dimension $n \geq 5$, let $\alpha_0 = \min \{(n/4-1), 1/2 \}$. Suppose that $H = -\Delta + V(x)$ is a self-adjoint operator,
with $|V(x)| \leq C \langle x \rangle^{-2(1+\alpha_0)}$ and $0 < C_1 (-\Delta) \leq H \leq C_2 (-\Delta)$.
Then
\begin{enumerate}
 \item $(H^{-1} - (-\Delta)^{-1}) \langle x \rangle^{-2}$, $H^{-1} \langle x \rangle^{-2}$ are bounded.
 \item For any $0< \alpha <\alpha_0$, we have $H^{-(1+\alpha)} \langle x \rangle^{-2(1+ \alpha)}$ is bounded.
 \end{enumerate}
\end{lemma}

\begin{proof}[Proof of Lemma (\ref{lemma:wHLS})]
Since $H \geq C_1 (-\Delta)$, we have
\begin{equation}
  \langle x \rangle^{-2} H^{-1} \langle x \rangle^{-2} \leq \frac{1}{C_1} \langle x \rangle^{-2} (-\Delta)^{-1} \langle x \rangle^{-2},
\end{equation}
in the sense of forms. By WHLS inequality, the RHS is bounded, then so is the LHS.
Use resolvent formula and WHLS inequality, the operator
\begin{equation}
\begin{split}
(H^{-1} - (-\Delta)^{-1}) \langle x \rangle^{-2} =& - (-\Delta)^{-1} V(x) H^{-1} \langle x \rangle^{-2}\\
=& - ((-\Delta)^{-1} \langle x \rangle^{-2}) (\langle x \rangle^{2} V(x) \langle x \rangle^{2}) (\langle x \rangle^{-2} H^{-1} \langle x \rangle^{-2})
\end{split}
\end{equation}
is bounded. Then $H^{-1} \langle x \rangle^{-2}$ is also bounded.

For $0<\alpha <\alpha_0 \leq 1/2$, we have
\begin{equation}
 H^{2\alpha} \geq C_{\alpha} (-\Delta)^{2\alpha},
 \end{equation}
which implies
\begin{equation}
 H^{-2\alpha} \leq \frac{1}{C_{\alpha}} (-\Delta)^{-2\alpha},
\end{equation}
in the form sense.
So we have
\begin{equation}
\| H^{-\alpha} \psi \| \leq \frac{1}{\sqrt{C_{\alpha}}} \| (-\Delta)^{-\alpha} \psi \|,
\end{equation}
for any $\psi$ such that the RHS is bounded. Then use this result and we get
\begin{equation}
 \begin{split}
  & \| H^{-(1+\alpha)} \langle x \rangle^{-2(1+\alpha)} \psi \| \\
  =& \| H^{-\alpha} \Bigl( (-\Delta)^{-1} - (-\Delta)^{-1} V(x) H^{-1} \Bigr) \langle x \rangle^{-2(1+\alpha)} \psi\| \\
  \leq & \frac{1}{\sqrt{C_{\alpha}}} \| (-\Delta)^{-\alpha} \Bigl( (-\Delta)^{-1} - (-\Delta)^{-1} V(x) H^{-1} \Bigr) \langle x \rangle^{-2(1+\alpha)} \psi\| \\
  =& \frac{1}{\sqrt{C_{\alpha}}} \| \Bigl( (-\Delta)^{-(1+ \alpha)} \langle x \rangle^{-2(1+\alpha)} - (-\Delta)^{-(1+ \alpha)} V(x) H^{-1} \langle x \rangle^{-2(1+\alpha)} \Bigr) \psi\| \\
  \leq& \widetilde{C_{\alpha}} \| \psi \|,
 \end{split}
\end{equation}
for any $\psi \in L^2_x$. So $H^{-(1+\alpha)} \langle x \rangle^{-2(1+\alpha)}$ is (can be extended to) a bounded operator.
\end{proof}

For equation (\ref{eqn:low-energy-2}), we write
\begin{equation}
\begin{split}
[x_j, Q_{\xi}] \chi_{U}(x) &= [x_j, Q_{\xi}Q_{3\xi}] \chi_{U}(x) \\
&= [x_j, Q_{\xi}] Q_{3\xi} \chi_{U}(x) + Q_{\xi}[x_j, Q_{3\xi}] \chi_{U}(x).
\end{split}
\end{equation}
Then we use commutator expansion lemma to analysis each term.
\begin{equation}
 \begin{split}
  [x_j, Q_{\xi}] Q_{3\xi} \chi_{U}(x) &= (Q'_{\xi} [x_j, H] + R_2) Q_{3\xi} \chi_{U}(x) \\
  &= ( 2i Q'_{\xi} p_j + R_2) Q_{3\xi} \chi_{U}(x) \\
  &= 2i Q'_{\xi} p_j Q_{3\xi} \chi_{U}(x) + R_2 Q_{3\xi} \chi_{U}(x).
 \end{split}
\end{equation}
\begin{equation}
\begin{split}
  Q_{\xi}[x_j, Q_{3\xi}] \chi_{U}(x) &= Q_{\xi} (Q'_{3\xi} [x_j, H] + \widetilde{R}_2) \chi_{U}(x) \\
  &= Q_{\xi} ( 2i p_j Q'_{3\xi} + \widetilde{R}_2) \chi_{U}(x) \\
  &= 2i Q_{\xi} p_j Q'_{3\xi} \chi_{U}(x) + Q_{\xi} \widetilde{R}_2 \chi_{U}(x).
\end{split}
\end{equation}

Here $p_j = -i \partial_j$ are the momentum operators, and $R_2$ is the remainder term in commutator expansion lemma
\begin{equation}
\begin{split}
 R_2 &= i^2 \int_{-\infty}^{\infty} \widehat{Q}_{\xi}(s) e^{isH} \ud s \int_0^s \int_0^{\mu} e^{-i\nu H} [[x_j,H],H] e^{i\nu H} \ud\nu \ud\mu \\
 &= - \int_{-\infty}^{\infty} \widehat{Q}_{\xi}(s) e^{isH} \ud s \int_0^s \int_0^{\mu} e^{-i\nu H} 2\frac{\partial V(x)}{\partial x_j} e^{i\nu H} \ud\nu \ud\mu.
\end{split}
\end{equation}
And $\widetilde{R}_2$ is similar to $R_2$:
\begin{equation}
 \begin{split}
 \widetilde{R}_2 &= -i^2\int_{-\infty}^{\infty} \widehat{Q}_{\xi}(s)  \int_0^s \int_0^{\mu} e^{i\nu H} [[x_j,H],H] e^{ - i\nu H} \ud\nu \ud\mu ~e^{isH} \ud s\\
 &= \int_{-\infty}^{\infty} \widehat{Q}_{\xi}(s) \int_0^s \int_0^{\mu} e^{i\nu H} 2\frac{\partial V(x)}{\partial x_j} e^{-i\nu H} \ud\nu \ud\mu ~e^{isH} \ud s.
\end{split}
\end{equation}

Use WHLS the same way we used for the first estimate, we get
\begin{equation}
\begin{split}
 \| 2i Q'_{\xi} p_j Q_{3\xi} \chi_{U}(x) \| =& \xi^{1/2+\alpha} O(1), \\
 \| 2i Q_{\xi} p_j Q'_{3\xi} \chi_{U}(x) \| =&  \xi^{1/2+\alpha} O(1).
\end{split}
\end{equation}
For the remainder terms, we need the following lemma:
\begin{lemma} \label{lemma:low-energy-remainder}
Suppose $W(x)$ is a bounded function of compact support, then
\begin{gather}
\label{eqn:remainder-1} \Bigl \| \Bigl( \int_{-\infty}^{\infty} \widehat{Q}_{\xi}(s) e^{isH} \ud s \int_0^s \int_0^{\mu} e^{-i\nu H} W(x) e^{i\nu H} \ud\nu \ud\mu \Bigr) Q_{3\xi} \chi_{U}(x) \Bigr \| = \xi^{1+2\alpha} O(1), \\
\label{eqn:remainder-2} \Bigl \| Q_{\xi} \Bigl( \int_{-\infty}^{\infty} \widehat{Q}_{3\xi}(s) \int_0^s \int_0^{\mu} e^{i\nu H} W(x) e^{-i\nu H} \ud\nu \ud\mu ~e^{isH} \ud s \Bigr) \chi_{U}(x) \Bigr \| = \xi^{1+\alpha} O(1).
\end{gather}
\end{lemma}

The equation (\ref{eqn:low-energy-2}) then follows directly from Lemma (\ref{lemma:low-energy-remainder}). We will prove the lemma later.

For equation (\ref{eqn:low-energy-3}), just use the same techniques as in proving equation (\ref{eqn:low-energy-2})
\begin{equation}
\begin{split}
[x_j^2, Q_{\xi}] \chi_{U}(x) &= [x_j^2, Q_{\xi}Q_{3\xi}] \chi_{U}(x) \\
&= [x_j^2, Q_{\xi}] Q_{3\xi} \chi_{U}(x) + Q_{\xi}[x_j^2, Q_{3\xi}] \chi_{U}(x).
\end{split}
\end{equation}
We compute
\begin{equation}\label{eqn:x_square_1}
 \begin{split}
  ~& [x_j^2, Q_{\xi}] Q_{3\xi} \chi_{U}(x)\\
  =~& (Q'_{\xi} [x_j^2, H] + R_2^{(2)}) Q_{3\xi} \chi_{U}(x) \\
  =~& 2i Q'_{\xi} (x_j p_j + p_j x_j) Q_{3\xi} \chi_{U}(x) + R_2^{(2)} Q_{3\xi} \chi_{U}(x)\\
  =~&  4i Q'_{\xi}  p_j x_j Q_{3\xi} \chi_{U}(x) -2Q'_{\xi} Q_{3\xi} \chi_{U}(x) + R_2^{(2)} Q_{3\xi} \chi_{U}(x)\\
  =~&  4i Q'_{\xi}  p_j Q_{3\xi} x_j \chi_{U}(x) + 4i Q'_{\xi}  p_j [x_j, Q_{3\xi}] \chi_{U}(x) -2Q'_{\xi} Q_{3\xi} \chi_{U}(x) + R_2^{(2)} Q_{3\xi} \chi_{U}(x),
 \end{split}
 \end{equation}
and
\begin{equation}\label{eqn:x_square_2}
\begin{split}
  ~&Q_{\xi}[x_j^2, Q_{3\xi}] \chi_{U}(x) \\
  =~& Q_{\xi}([x_j^2, H] Q'_{3\xi} + \widetilde{R}_2^{(2)} \chi_{U}(x) \\
  =~& 2i Q_{\xi} (x_j p_j + p_j x_j) Q'_{3\xi} \chi_{U}(x) + Q_{\xi} \widetilde{R}_2^{(2)} \chi_{U}(x)\\
  =~&  4i Q_{\xi}  p_j x_j Q'_{3\xi} \chi_{U}(x) -2Q_{\xi} Q'_{3\xi} \chi_{U}(x) + Q_{\xi} \widetilde{R}_2^{(2)} \chi_{U}(x)\\
  =~&  4i Q_{\xi}  p_j Q'_{3\xi} x_j \chi_{U}(x) + 4i Q_{\xi}  p_j [x_j, Q'_{3\xi}] \chi_{U}(x) -2Q_{\xi} Q'_{3\xi} \chi_{U}(x) + Q_{\xi} \widetilde{R}_2^{(2)} \chi_{U}(x).
\end{split}
\end{equation}

For the remainder terms,
\begin{gather}
 R_2^{(2)} = i^2 \int_{-\infty}^{\infty} \widehat{Q}_{\xi}(s) e^{isH} \ud s \int_0^s \int_0^{\mu} e^{-i\nu H} [[x_j^2,H],H] e^{i\nu H} \ud\nu \ud\mu, \\
 \widetilde{R}_2^{(2)} = -i^2 \int_{-\infty}^{\infty} \widehat{Q}_{\xi}(s) \int_0^s \int_0^{\mu} e^{i\nu H} [[x_j^2,H],H] e^{-i\nu H} \ud\nu \ud\mu ~e^{isH} \ud s.
\end{gather}
We can use the fact that
\begin{equation}
 [[x_j^2,H],H] = - 8p_j^2 + 4\frac{\partial V(x)}{\partial x_j} x_j.
\end{equation}
So we have
\begin{equation}
 [[x^2,H],H] = - 8p^2 + 4\nabla V(x) \cdot x = - 8H + 8V(x) + 4\nabla V(x) \cdot x.
\end{equation}

Then use WHLS inequality, the Lemma (\ref{lemma:low-energy-remainder}) and equation (\ref{eqn:low-energy-2}),
with the above computations, we prove every term in the RHS of (\ref{eqn:x_square_1}) and (\ref{eqn:x_square_2}) are of order $\xi^{\alpha}$.
So we proved (\ref{eqn:low-energy-3}).
\end{proof}

\begin{proof}[Proof of Lemma (\ref{lemma:low-energy-remainder}).]
We write $\widetilde{W} = H^{-1} W(x) H^{-(1+\alpha)}$ and $\widetilde {\chi} = H^{-(1+\alpha)} \chi_{U}(x)$.
They are both bounded operators due to the WHLS.
 \begin{equation}
  \begin{split}
   &  \Bigl( \int_{-\infty}^{\infty} \widehat{Q}_{\xi}(s) e^{isH} \ud s \int_0^s \int_0^{\mu} e^{-i\nu H} W(x) e^{i\nu H} \ud\nu \ud\mu \Bigr) Q_{3\xi} \chi_{U}(x) \\
   =& \Bigl( \int_{-\infty}^{\infty} \widehat{Q}_{\xi}(s) e^{isH} \ud s \int_0^s \int_0^{\mu} e^{-i\nu H} H \widetilde{W} e^{i\nu H} \ud\nu \ud\mu \Bigr) H^{2(1+\alpha)} Q_{3\xi} \widetilde {\chi} \\
   =& \Bigl(i \int_{-\infty}^{\infty} \widehat{Q}_{\xi}(s) e^{isH} \ud s \int_0^s \int_0^{\mu} \ud(e^{-i\nu H}) \widetilde{W} e^{i\nu H} \ud\mu \Bigr) H^{2(1+\alpha)} Q_{3\xi} \widetilde {\chi}  \\
   =& \Bigl(i \int_{-\infty}^{\infty} \widehat{Q}_{\xi}(s) e^{isH} \ud s \int_0^s (e^{-i\mu H} \widetilde{W} e^{i\mu H} - \widetilde{W}) \ud\mu \Bigr) H^{2(1+\alpha)} Q_{3\xi} \widetilde {\chi} \\
   & -\Bigl(i \int_{-\infty}^{\infty} \widehat{Q}_{\xi}(s) e^{isH} \ud s \int_0^s \int_0^{\mu} e^{-i\nu H} \widetilde{W} \ud(e^{i\nu H}) \ud\mu \Bigr) H^{2(1+\alpha)} Q_{3\xi} \widetilde {\chi}  \\
   =& \Bigl(i \int_{-\infty}^{\infty} \widehat{Q}_{\xi}(s) e^{isH} \ud s \int_0^s (e^{-i\mu H} \widetilde{W} e^{i\mu H} - \widetilde{W}) \ud\mu \Bigr) H^{2(1+\alpha)} Q_{3\xi} \widetilde {\chi} \\
  & + \Bigl( \int_{-\infty}^{\infty} \widehat{Q}_{\xi}(s) e^{isH} \ud s \int_0^s \int_0^{\mu} e^{-i\nu H} \widetilde{W} e^{i\nu H} \ud\nu \ud\mu \Bigr) H^{1+2(1+\alpha)} Q_{3\xi} \widetilde {\chi}.
  \end{split}
 \end{equation}

Since $\widetilde{W}$ is a bounded operator, then we have
\begin{equation}
 \begin{split}
  &\Bigl\| \int_{-\infty}^{\infty} \widehat{Q}_{\xi}(s) e^{isH} \ud s \int_0^s \int_0^{\mu} e^{-i\nu H} \widetilde{W} e^{i\nu H} \ud\nu \ud\mu \Bigr\|\\
 \leq& \|\widetilde{W}\| \Bigl\| \int_{-\infty}^{\infty} \widehat{Q}_{\xi}(s) e^{isH} \ud s \int_0^s \int_0^{\mu} \ud\nu \ud\mu \Bigr\|
 = \xi^{-2} O(1),
\end{split}
\end{equation}
\begin{equation}
 \begin{split}
 &\Bigl\| \int_{-\infty}^{\infty} \widehat{Q}_{\xi}(s) e^{isH} \ud s \int_0^s (e^{-i\mu H} \widetilde{W} e^{i\mu H} - \widetilde{W}) \ud\mu \Bigr\| \\
 \leq& \|\widetilde{W}\| \Bigl\| \int_{-\infty}^{\infty} \widehat{Q}_{\xi}(s) e^{isH} \ud s \int_0^s \ud\mu \Bigr\|
 = \xi^{-1} O(1).
 \end{split}
\end{equation}
We also know $\widetilde{\chi}$ is bounded and that
\begin{gather}
\| H^{1+2(1+\alpha)} Q_{3\xi} \| \leq \xi^{3+2\alpha},\\
\| H^{2(1+\alpha)} Q_{3\xi} \| \leq \xi^{2+2\alpha}.
\end{gather}
Combining the above results, we proved equation (\ref{eqn:remainder-1}).

For equation (\ref{eqn:remainder-2}), we have to use WHLS to extract $H^2$ between the two compact functions $W(x)$ and $\chi_{U}(x)$.
Similarly we write $\widetilde{\widetilde{W}} = H^{-(1+\alpha)} W(x) H^{-1}$ and $\widetilde{\widetilde{\chi}} = H^{-1} \chi_{U}(x)$.
They are both bounded operators due to the WHLS.
\begin{equation}
\begin{split}
&  Q_{\xi} \Bigl( \int_{-\infty}^{\infty} \widehat{Q}_{3\xi}(s) e^{isH} \ud s \int_0^s \int_0^{\mu} e^{-i\nu H} W(x) e^{i\nu H} \ud\nu \ud\mu \Bigr) \chi_{U}(x) \\
=& Q_{\xi} H^{1+\alpha} \Bigl( \int_{-\infty}^{\infty} \widehat{Q}_{3\xi}(s) e^{isH} \ud s \int_0^s \int_0^{\mu} e^{-i\nu H} \widetilde{\widetilde{W}} e^{i\nu H} H^2 \ud\nu \ud\mu \Bigr) \widetilde{\widetilde{\chi}}. \\
\end{split}
\end{equation}
Then we can use integration by parts twice to move $H^2$ to the left of $\widetilde{\widetilde{W}}$ and hit $Q_{\xi}$,
with the same method as we prove equation (\ref{eqn:remainder-1}).
\begin{equation}
\begin{split}
& \int_0^s \int_0^{\mu} e^{-i\nu H} \widetilde{\widetilde{W}} e^{i\nu H} H^2 \ud\nu \ud\mu \\
=& -i\int_0^s \int_0^{\mu} e^{-i\nu H} \widetilde{\widetilde{W}} H \ud (e^{i\nu H})  \ud\mu \\
=& -i\int_0^s (e^{-i\mu H} \widetilde{\widetilde{W}} H e^{i\mu H} - \widetilde{\widetilde{W}} H) \ud\mu \\
& +i\int_0^s \int_0^{\mu} \ud (e^{-i\nu H}) \widetilde{\widetilde{W}} H e^{i\nu H}  \ud\mu \\
=& -(e^{-i\mu H} \widetilde{\widetilde{W}} e^{i\mu H})\Big|_0^{s} + \int_0^{s} \ud(e^{-i\mu H}) \widetilde{\widetilde{W}} e^{i\mu H}
+ i\widetilde{\widetilde{W}}H \int_0^s \ud \mu \\
& -iH \int_0^s \Big( e^{-i\nu H} \widetilde{\widetilde{W}} e^{i\nu H} \Big|_0^{\mu} - \int_0^{\mu} -iH  e^{-i\nu H} \widetilde{\widetilde{W}} e^{i\nu H} \ud \nu \Big) \ud \mu \\
=& (1 + sH + s^2 H^2) O(1) + is\widetilde{\widetilde{W}} H.
\end{split}
\end{equation}
So we have
\begin{equation}
\begin{split}
&  Q_{\xi} \Bigl( \int_{-\infty}^{\infty} \widehat{Q}_{3\xi}(s) e^{isH} \ud s \int_0^s \int_0^{\mu} e^{-i\nu H} W(x) e^{i\nu H} \ud\nu \ud\mu \Bigr) \chi_{U}(x) \\
=& Q_{\xi} H^{1+\alpha} \Bigl( \int_{-\infty}^{\infty} \widehat{Q}_{3\xi}(s) e^{isH} \ud s \int_0^s \int_0^{\mu} e^{-i\nu H} \widetilde{\widetilde{W}} e^{i\nu H} H^2 \ud\nu \ud\mu \Bigr) \widetilde{\widetilde{\chi}} \\
=& Q_{\xi} H^{1+\alpha} \Bigl( \int_{-\infty}^{\infty} \widehat{Q}_{3\xi}(s) e^{isH} \{ (1 + sH + s^2 H^2) O(1) + is\widetilde{\widetilde{W}} H \} \ud s \Bigr) \widetilde{\widetilde{\chi}} \\
=& \xi^{1 + \alpha} O(1) + Q_{\xi} H^{1+\alpha} \int_{-\infty}^{\infty} \widehat{Q}_{3\xi}(s) e^{isH} is\widetilde{\widetilde{W}} H \ud s \widetilde{\widetilde{\chi}} \\
=& \xi^{1 + \alpha} O(1) + \Bigl( \int_{-\infty}^{\infty} \widehat{Q}_{3\xi}(s) e^{isH} is \ud s \Bigr) Q_{\xi} H^{1+\alpha} \widetilde{\widetilde{W}} H  \widetilde{\widetilde{\chi}} \\
=& \xi^{1 + \alpha} O(1) + Q'_{3\xi} Q_{\xi} H^{1+\alpha} \widetilde{\widetilde{W}} H  \widetilde{\widetilde{\chi}} \\
=& \xi^{1 + \alpha} O(1).
\end{split}
\end{equation}
The last equality used the fact that $Q'_{3\xi} Q_{\xi}= 0$. Thus we proved equation (\ref{eqn:remainder-2}).
\end{proof}

\section{Examples}
\subsection{Potentials with Nondefinite Sign}
In this section, we discuss Schr\"odinger operators with potential function of special form.

\begin{thm}
 In dimension $n \geq 3$, suppose $H = -\Delta + V(x)$ is self-adjoint.
 For fixed $a>0$ and $1/2 < \sigma < 4 (n-2)^2$, if for some $0<\lambda<1$, $V(x)$ satisfies the following condition:
\begin{equation}
 \lambda (4 - \frac{\sigma}{(n-2)^2} ) \langle x \rangle^{-\sigma} (- \Delta) \langle x \rangle^{-\sigma}
 -2 f(r) (\frac{x}{r} \cdot \nabla V(x)) \geq 0,
\end{equation}
then $i[H, \gamma_0]$ is positive, and
\begin{equation}
 i[H, \gamma_0] \geq (1 - \lambda) ( 4 - \frac{\sigma}{(n-2)^2}) \langle x \rangle^{-\sigma} (-\Delta) \langle x \rangle^{-\sigma}.
\end{equation}
Especially, a slightly stronger but easier condition to verify implying that $i[H, \gamma_0]$ is positive is:
\begin{equation}
\lambda ((n-2)^2 - \sigma/4) \frac{1}{r^2 \langle x \rangle^{2\sigma}}
-2 \frac{M_{\sigma}}{\sqrt{a}} |\nabla V(x)| \geq 0.
\end{equation}
\end{thm}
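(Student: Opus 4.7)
The plan is to peel off the Laplacian commutator, which is already controlled by Proposition \ref{prop:Delta}, split it into a fraction $\lambda$ that absorbs the potential part and a remaining fraction $1-\lambda$ that gives the stated lower bound. Set $C = 4 - \sigma/(n-2)^2$, which is strictly positive by hypothesis on $\sigma$. By (C3) we have
\begin{equation*}
 i[H, \gamma_0] = i[-\Delta, \gamma_0] + i[V, \gamma_0] = i[-\Delta, \gamma_0] - 2 \nabla F \cdot \nabla V(x),
\end{equation*}
and since $\nabla F(r) = f(r)\, x/r$, the potential commutator equals $-2 f(r) (x/r)\cdot\nabla V(x)$. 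Proposition \ref{prop:Delta} (with $c=0$) gives $i[-\Delta, \gamma_0] \geq C \langle x\rangle^{-\sigma}(-\Delta)\langle x\rangle^{-\sigma}$ in the form sense.

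Next I would write
\begin{equation*}
 i[H,\gamma_0] = (1-\lambda)\, i[-\Delta,\gamma_0] + \Bigl\{ \lambda\, i[-\Delta,\gamma_0] - 2 f(r) (x/r)\cdot\nabla V(x) \Bigr\}.
\end{equation*}
Bounding the first bracket from below by $(1-\lambda) C \langle x\rangle^{-\sigma}(-\Delta)\langle x\rangle^{-\sigma}$, and bounding the second by $\lambda C \langle x\rangle^{-\sigma}(-\Delta)\langle x\rangle^{-\sigma} - 2 f(r) (x/r)\cdot\nabla V(x) \geq 0$, which is exactly the hypothesis of the theorem, we obtain the claimed estimate. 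So the first part reduces to a direct combination of Proposition \ref{prop:Delta} with the assumption.

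For the stronger but more concrete condition, the idea is to discard the kinetic term in $-\Delta$ and keep only its Hardy lower bound. Hardy's inequality in dimension $n\geq 3$ gives $-\Delta \geq (n-2)^2/(4|x|^2)$ in the form sense, so
\begin{equation*}
 \lambda C\, \langle x\rangle^{-\sigma}(-\Delta)\langle x\rangle^{-\sigma} \geq \lambda C \frac{(n-2)^2}{4}\, \frac{1}{r^2 \langle x\rangle^{2\sigma}} = \lambda \Bigl((n-2)^2 - \tfrac{\sigma}{4}\Bigr)\, \frac{1}{r^2 \langle x\rangle^{2\sigma}}.
\end{equation*}
On the other hand, $|2 f(r) (x/r)\cdot \nabla V(x)| \leq 2 f(\infty) |\nabla V(x)| = 2 (M_\sigma/\sqrt{a})\,|\nabla V(x)|$ pointwise, using the monotonicity of $f$ and the explicit value $f(\infty) = M_\sigma/\sqrt{a}$ recorded in the preliminary computations. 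Hence the pointwise inequality in the stronger condition implies the operator-form hypothesis of the main statement, and the first part applies.

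There is no real obstacle here; the statement is essentially a corollary of Proposition \ref{prop:Delta} together with Hardy's inequality. The only mild subtlety is to make sure the inequalities are interpreted correctly in the sense of quadratic forms on $D(-\Delta)$: the bounds $f(r) \leq f(\infty)$ and $|(x/r)\cdot\nabla V| \leq |\nabla V|$ are pointwise and thus pass to multiplication-operator form inequalities, while the Hardy bound is a genuine operator inequality conjugated by the bounded multiplier $\langle x \rangle^{-\sigma}$, so the chain of form inequalities above is well-posed.
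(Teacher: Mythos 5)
Your proof is correct and follows essentially the same route as the paper: split off a $(1-\lambda)$ fraction of the Laplacian commutator bounded below via Proposition~\ref{prop:Delta}, and let the remaining $\lambda$ fraction absorb the potential term via the hypothesis. The derivation of the "slightly stronger" pointwise condition from Hardy's inequality together with $f(r) \leq f(\infty) = M_\sigma/\sqrt{a}$ and $|(x/r)\cdot\nabla V| \leq |\nabla V|$ is exactly what is needed and is correct.
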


\begin{proof}

Use our previous result,
\begin{equation}
 \begin{split}
i[H,\gamma_0] \geq& -(4 - \frac{\sigma}{(n-2)^2})  \langle x \rangle^{-\sigma} \Delta  \langle x \rangle^{-\sigma} - 2 \nabla F \cdot \nabla V \\
=& -(1-\lambda)(4 - \frac{\sigma}{(n-2)^2})  \langle x \rangle^{-\sigma} \Delta  \langle x \rangle^{-\sigma} \\
&+ \{-\lambda (4 - \frac{\sigma}{(n-2)^2}) \langle x \rangle^{-\sigma} \Delta \langle x \rangle^{-\sigma}
 -2 f(r) (\frac{x}{r} \cdot \nabla V)\} \\
\geq&  -(1-\lambda)(4 - \frac{\sigma}{(n-2)^2})  \langle x \rangle^{-\sigma} \Delta  \langle x \rangle^{-\sigma}.
 \end{split}
\end{equation}
So if $0<\lambda < 1$, then $i[H, \gamma_0]$ will be a positive operator.
\end{proof}

\begin{example}
We consider potential function of this form:
\begin{equation}
 V(x) = V(r) = \frac{-1}{b + c r^{2+\epsilon}}.
\end{equation}
Then a sufficient condition for $i[H, \gamma]$ to be positive is:
\begin{equation}\label{example:negative}
\begin{split}
 1/2 < \sigma \leq 1, ~~~~\lambda<4-\sigma/(n-2)^2, ~~~~ \epsilon = 2\sigma -1, \\
 b \geq \frac{4(2+\epsilon)M_{\sigma} }{a (n-2)^2 \lambda}, ~~~~ c\geq \frac{16(2+\epsilon) M_{\sigma} a^{\epsilon/2}}{(n-2)^2\lambda}.
\end{split}
\end{equation}
If we choose the parameters $b$ and $c$ in (\ref{example:negative}) with the equality satisfied, then
\begin{equation}
 V(x) = V(r) = \frac{-a(n-2)^2 \lambda}{M_{\sigma}(2+\epsilon) [4 + 16 (\sqrt{a}r)^{2+\epsilon}]}.
\end{equation}
\end{example}

Another situation of negative potential is, instead of one potential of large size, we can also have many potentials of small size.
We prove a similar result:
\begin{thm}
  In dimension $n \geq 3$, suppose $H = -\Delta + V(x)$ is self-adjoint.
  Fix $a>0$ and $1/2 < \sigma < 4 (n-2)^2$.
  Suppose $V(x) = \sum_{j = 1}^{m} V_j(x)$, with $V_j(x)$ centered at $x= c_j$.
 If for each $j$, there is $0<\lambda_j<1$, such that $V_j(x)$ satisfies the following condition:
\begin{equation}
\lambda_j ((n-2)^2 - \sigma/4) \frac{1}{r^2 \langle x - c_j \rangle^{2\sigma}}
-2 \frac{M_{\sigma}}{\sqrt{a}} |\nabla V(x)| \geq 0,
\end{equation}
with $\lambda = \sum_{j=1}^{m} \lambda_j <1$, then $i[H, \sum_{j=1}^{m} \lambda_j \gamma_{c_j}]$ is a positive operator.
To be specific,
\begin{equation}
 i[H, \sum_{j=1}^{m} \lambda_j \gamma_{c_j}] \geq (1 - \lambda) ( 4 - \frac{\sigma}{(n-2)^2})
 \sum_{j=1}^{m} \langle x-c_j \rangle^{-\sigma} (-\lambda_j \Delta) \langle x-c_j \rangle^{-\sigma}.
\end{equation}
\end{thm}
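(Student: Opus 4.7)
The plan is to mirror the single-center argument just completed, distributing it linearly across the weighted sum $\sum_j \lambda_j \gamma_{c_j}$. I would first expand the commutator by linearity,
\begin{equation}
i[H, \sum_j \lambda_j \gamma_{c_j}] = \sum_j \lambda_j \, i[-\Delta, \gamma_{c_j}] + \sum_j \lambda_j \, i[V, \gamma_{c_j}],
\end{equation}
and apply Proposition \ref{prop:Delta}, translated to each center $c_j$, to the first sum, obtaining
\begin{equation}
\sum_j \lambda_j \, i[-\Delta, \gamma_{c_j}] \geq S, \qquad S := \Bigl(4 - \tfrac{\sigma}{(n-2)^2}\Bigr) \sum_j \lambda_j \langle x-c_j \rangle^{-\sigma} (-\Delta) \langle x-c_j \rangle^{-\sigma}.
\end{equation}
For the potential sum, identity (C2) gives $i[V, \gamma_{c_j}] = -2\nabla F_{c_j} \cdot \nabla V$, and since $|\nabla F_{c_j}(x)| = f(|x-c_j|) \leq f(\infty) = M_\sigma/\sqrt{a}$, this yields the pointwise estimate
\begin{equation}
\Bigl| 2\sum_j \lambda_j \nabla F_{c_j} \cdot \nabla V \Bigr| \leq 2\lambda \frac{M_\sigma}{\sqrt{a}} |\nabla V(x)|, \qquad \lambda := \sum_j \lambda_j.
\end{equation}

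Once these pieces are in hand, the desired conclusion $i[H, \sum_j \lambda_j \gamma_{c_j}] \geq (1-\lambda) S$ is equivalent, after subtracting $(1-\lambda)S$ and using $\sum_j \lambda_j i[-\Delta,\gamma_{c_j}] \geq S$, to the pointwise scalar inequality
\begin{equation}
\lambda S \geq 2\sum_j \lambda_j \nabla F_{c_j} \cdot \nabla V.
\end{equation}
Applying Hardy's inequality at each center, exactly as in the proof of Proposition \ref{prop:Delta}, each summand in $S$ is bounded below by $((n-2)^2 - \sigma/4)/(|x-c_j|^2 \langle x-c_j\rangle^{2\sigma})$. The hypothesis (reading $r$ as $|x-c_j|$ in the $j$-th inequality) states precisely that the $\lambda_j$-weighted Hardy bound already dominates the full $2(M_\sigma/\sqrt{a})|\nabla V|$; hence so does $S$, and therefore $\lambda S \geq 2\lambda (M_\sigma/\sqrt{a})|\nabla V|$, which is the domination needed.

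The only real subtlety here is bookkeeping: the hypothesis is indexed by $j$ but references the \emph{full} $\nabla V$ rather than $\nabla V_j$. This is essential, because the individual potential commutators $\nabla F_{c_j}\cdot \nabla V$ cannot be split cleanly into contributions from separate bumps without further structural assumptions; the uniform hypothesis on $|\nabla V|$ sidesteps the need for such a split. The role of the condition $\lambda<1$ is to leave a strictly positive fraction $(1-\lambda)$ of $S$ as the final lower bound, after the fraction $\lambda$ has been spent absorbing the potential terms. With these observations, the argument is essentially an assembly of the preceding single-center theorem across the weighted sum, and no new analytic input is required.
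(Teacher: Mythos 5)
Your proposal is correct and follows exactly the route the paper intends (the paper in fact omits the proof of this theorem, but its structure is the same linear superposition of the preceding single-center argument): expand $i[H,\sum_j\lambda_j\gamma_{c_j}]$ by linearity, apply Proposition \ref{prop:Delta} at each center, bound the potential commutators via $|\nabla F_{c_j}|\le f(\infty)=M_\sigma/\sqrt{a}$, and invoke Hardy's inequality plus the hypothesis to show the spent fraction $\lambda S$ dominates $2\sum_j\lambda_j\nabla F_{c_j}\cdot\nabla V$. Two small wording slips that do not affect validity: the reduction ``is equivalent to'' should read ``is implied by'' the scalar inequality $\lambda S\ge 2\sum_j\lambda_j\nabla F_{c_j}\cdot\nabla V$, and the Hardy lower bound of the $j$-th summand of $S$ should carry the factor $\lambda_j$ (which you do restore in the next sentence when invoking the ``$\lambda_j$-weighted Hardy bound'').
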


\subsection{General Potential with One Positive Bump}
In this case, suppose the potential function is $V(x) = V_+(x) + \sum_{j=1}^{m} V_j(x)$.
Assume $V_+(x) = V_+(r)$ is radially decreasing $C^1$ potential centered at $x=0$,
 and $V_j(x)$ is a any potential function centered at $x = c_j$.

We can still utilize the cancellation lemma in this case. But notice that if $x \cdot \nabla V_j$ is positive,
$i[V_j(x), \gamma_{-c} + \gamma_{c}]$ will always be negative on the support of $V_j(x)$ even after the cancellation.
So it is better to choose $\gamma$'s symmetric with respect to $V_+(x)$, and keep the number of $\gamma$'s small.

We can prove the following theorem:
\begin{thm}
In dimension $n \geq 3$, suppose $H = -\Delta + V(x)$ is self-adjoint, and $V(x) = V_+(x) + \sum_{j = 1}^{m} V_j(x)$ as described above.
Fix $a>0$ and $1/2 < \sigma < 4 (n-2)^2$.
If for each $j$, there is $0<\lambda_j<1$, such that $V_j(x)$ satisfies the following condition:
\begin{equation}
\lambda_j ((n-2)^2 - \sigma/4) \frac{1}{r^2 \langle x - c_j \rangle^{2\sigma}}
-2 \frac{M_{\sigma}}{\sqrt{a}} |\nabla V_j(x)| \geq 0,
\end{equation}
with $\lambda = \sum_{j=1}^{m} \lambda_j < 1/2$, then $i[H, \sum_{j=1}^{m} \lambda_j (\gamma_{c_j} + \gamma_{-c_j})]$ is a positive operator.
To be specific,
\begin{equation}
\begin{split}
 i[H, \sum_{j=1}^{m} \lambda_j \gamma_{c_j}]
 \geq & (1 - 2\lambda) ( 4 - \frac{\sigma}{(n-2)^2}) \sum_{j=1}^{m} \langle x-c_j \rangle^{-\sigma} (-\lambda_j \Delta) \langle x-c_j \rangle^{-\sigma} \\
 & + ( 4 - \frac{\sigma}{(n-2)^2}) \sum_{j=1}^{m} \langle x+c_j \rangle^{-\sigma} (-\lambda_j \Delta) \langle x+c_j \rangle^{-\sigma}.
\end{split}
\end{equation}
\end{thm}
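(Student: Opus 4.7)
The plan is to expand the commutator into three contributions and handle them separately:
\begin{equation*}
 i[H, \textstyle\sum_{j=1}^m \lambda_j(\gamma_{c_j}+\gamma_{-c_j})]
 = i[-\Delta, \textstyle\sum_j \lambda_j(\gamma_{c_j}+\gamma_{-c_j})]
 + i[V_+, \textstyle\sum_j \lambda_j(\gamma_{c_j}+\gamma_{-c_j})]
 + \textstyle\sum_{k=1}^m i[V_k, \textstyle\sum_j \lambda_j(\gamma_{c_j}+\gamma_{-c_j})].
\end{equation*}
The Laplacian piece is lower-bounded by applying Proposition \ref{prop:Delta} to each $\gamma_{\pm c_j}$ separately and multiplying by $\lambda_j$, which gives
$$i[-\Delta, \gamma_N^{\ast}] \geq (4 - \tfrac{\sigma}{(n-2)^2})\sum_{j=1}^m \lambda_j \Big( -\langle x-c_j\rangle^{-\sigma}\Delta\langle x-c_j\rangle^{-\sigma} - \langle x+c_j\rangle^{-\sigma}\Delta\langle x+c_j\rangle^{-\sigma} \Big),$$
where $\gamma_N^{\ast} := \sum_j \lambda_j(\gamma_{c_j}+\gamma_{-c_j})$.

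Next I would dispose of the $V_+$ term using the Cancellation Lemma. Since $V_+$ is radially symmetric and decreasing and each pair $\gamma_{c_j}+\gamma_{-c_j}$ is symmetric with respect to the origin, after rotating coordinates so that $c_j$ lies on the $x_1$-axis we obtain $i[V_+,\gamma_{c_j}+\gamma_{-c_j}]\geq 0$; the rotation does not affect $V_+$ (it is fully radial), so summing in $j$ gives $i[V_+, \gamma_N^{\ast}] \geq 0$. This is exactly where the symmetric choice $\gamma_{c_j}+\gamma_{-c_j}$ pays off: the positive bump cannot contribute negatively.

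For each $V_k$, I use (C2) and the elementary bound $|\nabla F_c| = f(|x-c|) \leq f(\infty) = M_\sigma/\sqrt{a}$ to get the pointwise estimate
$$|i[V_k, \gamma_N^{\ast}]| \leq \sum_{j=1}^m \lambda_j \cdot 2(|\nabla F_{c_j}|+|\nabla F_{-c_j}|)|\nabla V_k(x)| \leq 4\lambda\, \tfrac{M_\sigma}{\sqrt{a}} |\nabla V_k(x)|, \qquad \lambda=\textstyle\sum_j\lambda_j.$$
Applying the hypothesis on $V_k$ (with $r=|x-c_k|$) yields
$$|i[V_k,\gamma_N^\ast]| \leq 2\lambda\, \lambda_k\big((n-2)^2-\tfrac{\sigma}{4}\big)\frac{1}{|x-c_k|^2\langle x-c_k\rangle^{2\sigma}}.$$
Now I absorb this negative part into the Laplacian's Hardy contribution at the center $c_k$: by Hardy's inequality (cf.\ the proof of Proposition \ref{prop:Delta}),
$$\lambda_k\big(4-\tfrac{\sigma}{(n-2)^2}\big)\,\langle x-c_k\rangle^{-\sigma}(-\Delta)\langle x-c_k\rangle^{-\sigma} \geq \lambda_k\big((n-2)^2-\tfrac{\sigma}{4}\big)\frac{1}{|x-c_k|^2\langle x-c_k\rangle^{2\sigma}}.$$
Thus the full Hardy term at $c_k$ dominates $|i[V_k,\gamma_N^\ast]|$ with exactly a factor $(1-2\lambda)$ left over when $\lambda<1/2$.

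Summing over $k$, the leftover at each center $c_k$ is $(1-2\lambda)$ times the Hardy/Laplacian lower bound, while the entire Laplacian contribution at $-c_k$ is untouched (nothing needed to be absorbed there, since $V_+$ already contributed non-negatively and we chose to do all the absorption at $c_k$). Assembling the pieces gives precisely the stated inequality
\begin{equation*}
\begin{split}
 i[H,\gamma_N^{\ast}]
 \geq &~ (1-2\lambda)\big(4-\tfrac{\sigma}{(n-2)^2}\big)\sum_{j=1}^m \langle x-c_j\rangle^{-\sigma}(-\lambda_j\Delta)\langle x-c_j\rangle^{-\sigma} \\
 &+ \big(4-\tfrac{\sigma}{(n-2)^2}\big)\sum_{j=1}^m \langle x+c_j\rangle^{-\sigma}(-\lambda_j\Delta)\langle x+c_j\rangle^{-\sigma}.
\end{split}
\end{equation*}
The only subtle step is the bookkeeping in absorbing $V_k$ with the $c_k$-Hardy term (rather than spreading across all centers); the radial symmetry of $V_+$ combined with the symmetric choice of multipliers is the structural reason why the threshold jumps from $\lambda<1$ (in the previous theorem) to $\lambda<1/2$ here, since the factor of $2$ in $|i[V_k,\gamma_N^\ast]|\lesssim 4\lambda f(\infty)|\nabla V_k|$ comes from the twin terms $\gamma_{c_j}$ and $\gamma_{-c_j}$.
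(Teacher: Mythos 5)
Your proposal is correct and follows essentially the same route as the paper: you bound the Laplacian commutator via Proposition \ref{prop:Delta}, dispose of $i[V_+,\gamma_{c_j}+\gamma_{-c_j}]\ge 0$ by the cancellation lemma exploiting the symmetric pairing, bound $|i[V_k,\gamma_N^\ast]|\le 4\lambda f(\infty)|\nabla V_k|$ by $|\nabla F_{\pm c_j}|\le f(\infty)$, and absorb it into a $2\lambda$-fraction of the $c_k$-centered Hardy term. The only cosmetic difference is that the paper performs this absorption through an algebraic regrouping and then invokes the hypothesis, whereas you make the Hardy step at each $c_k$ explicit -- a useful clarification but not a genuinely different argument.
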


\begin{proof}
We choose $\gamma = \sum_{j=1}^{m} \lambda_j (\gamma_{c_j} + \gamma_{-c_j})$, then $i[V_+(x), \gamma] \geq 0$ by the cancellation lemma
since $\gamma_{c_j}$ and $\gamma_{-c_j}$ are symmetric with respect to $V_+(x)$.
So for the general potential with one positive bump,
\begin{equation}
\begin{split}
 i[H, \gamma] \geq& (4 - \frac{\sigma}{(n-2)^2}) \sum_{j=1}^{m} \langle x-c_j \rangle^{-\sigma} (-\lambda_j \Delta) \langle x-c_j \rangle^{-\sigma}\\
& + ( 4 - \frac{\sigma}{(n-2)^2}) \sum_{j=1}^{m} \langle x+c_j \rangle^{-\sigma} (-\lambda_j \Delta) \langle x+c_j \rangle^{-\sigma} \\
& - 4\sum_{i=1}^{m} \sum_{j=1}^{m} \lambda_j \frac{M_{\sigma}}{\sqrt{a}} |\nabla V_i(x)| \\
= & (4 - \frac{\sigma}{(n-2)^2}) \sum_{j=1}^{m} \langle x-c_j \rangle^{-\sigma} (- \lambda_j \Delta) \langle x-c_j \rangle^{-\sigma}
- 4\lambda \sum_{i=1}^{m} \frac{M_{\sigma}}{\sqrt{a}} |\nabla V_i(x)| \\
& + ( 4 - \frac{\sigma}{(n-2)^2}) \sum_{j=1}^{m} \langle x+c_j \rangle^{-\sigma} (-\lambda_j \Delta) \langle x+c_j \rangle^{-\sigma} \\
=& (1-2\lambda) (4 - \frac{\sigma}{(n-2)^2}) \sum_{j=1}^{m} \langle x-c_j \rangle^{-\sigma} (- \lambda_j \Delta) \langle x-c_j \rangle^{-\sigma}\\
& + ( 4 - \frac{\sigma}{(n-2)^2}) \sum_{j=1}^{m} \langle x+c_j \rangle^{-\sigma} (-\lambda_j \Delta) \langle x+c_j \rangle^{-\sigma}\\
& + 2\lambda \sum_{j=1}^{m} \{ (4 - \frac{\sigma}{(n-2)^2}) \langle x-c_j \rangle^{-\sigma} (- \lambda_j \Delta) \langle x-c_j \rangle^{-\sigma}
 - 2\frac{M_{\sigma}}{\sqrt{a}} |\nabla V_j(x)|\}.
\end{split}
\end{equation}
Then by assumption,
\begin{equation}
\begin{split}
i[H, \gamma] \geq & (1-2\lambda) (4 - \frac{\sigma}{(n-2)^2}) \sum_{j=1}^{m} \{\langle x-c_j \rangle^{-\sigma} (-\lambda_i \Delta) \langle x-c_j \rangle^{-\sigma} \} \\
& + ( 4 - \frac{\sigma}{(n-2)^2}) \sum_{j=1}^{m} \langle x+c_j \rangle^{-\sigma} (-\lambda_j \Delta) \langle x+c_j \rangle^{-\sigma}.
\end{split}
\end{equation}
\end{proof}

Notice that, compared to the previous result on general potential function, here we lost a factor of 2 on the condition of $\lambda$.
And we also did not utilize the positivity of $i[-\Delta, \gamma_{-c_j}]$.
So if the general potential functions are symmetric relative to the positive bump function, we can utilize $i[-\Delta, \gamma_{-c_j}]$ and regain the lost factor of 2 in some sense.

\begin{thm}
In dimension $n \geq 3$, suppose $H = -\Delta + V(x)$ is self-adjoint,
and $V(x) = V_+(x) + \sum_{j = 1}^{m} V_j(x) + \widetilde{V}_j(x)$, with $V_+(x) = V_+(r)$ is radially decreasing $C^2$ potential centered at $x=0$,
$V_j$ centered at $x=c_j$ and $\widetilde{V}_j$ centered at $x=-c_j$.
Fix $a>0$ and $1/2 < \sigma < 4 (n-2)^2$.
If for each $j$, there is $0<\lambda_j<1$, such that $V_j(x)$ and $\widetilde{V}_j(x)$ satisfy the following condition:
\begin{equation}
\begin{split}
((n-2)^2 - \sigma/4) \frac{\lambda_j}{r^2 \langle x - c_j \rangle^{2\sigma}} -2 \frac{M_{\sigma}}{\sqrt{a}} |\nabla V_j(x)| \geq 0, \\
((n-2)^2 - \sigma/4) \frac{\lambda_j}{r^2 \langle x + c_j \rangle^{2\sigma}} -2 \frac{M_{\sigma}}{\sqrt{a}} |\nabla \widetilde{V}_j(x)| \geq 0,
\end{split}
\end{equation}
with $\lambda = \sum_{j=1}^{m} \lambda_j < 1/2$, then $i[H, \sum_{j=1}^{m} \lambda_j (\gamma_{c_j} + \gamma_{-c_j})]$ is a positive operator.
To be specific,
\begin{equation}
\begin{split}
 i[H, \sum_{j=1}^{m} \lambda_j \gamma_{c_j}] \geq & (1 - 2\lambda) ( 4 - \frac{\sigma}{(n-2)^2}) \sum_{j=1}^{m} \langle x-c_j \rangle^{-\sigma} (-\lambda_j \Delta) \langle x-c_j \rangle^{-\sigma} \\
 & + (1 - 2\lambda) ( 4 - \frac{\sigma}{(n-2)^2}) \sum_{j=1}^{m} \langle x+c_j \rangle^{-\sigma} (-\lambda_j \Delta) \langle x+c_j \rangle^{-\sigma}.
\end{split}
\end{equation}
\end{thm}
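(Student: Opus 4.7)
The plan is to mirror the proof of the preceding theorem, but to exploit the extra symmetry in the potential to recover the factor of $2$ that was lost there. Set $\gamma \equiv \sum_{j=1}^m \lambda_j(\gamma_{c_j} + \gamma_{-c_j})$ and decompose
\[
i[H,\gamma] = i[-\Delta,\gamma] + i[V_+,\gamma] + \sum_{j=1}^m \bigl\{ i[V_j,\gamma] + i[\widetilde V_j,\gamma]\bigr\}.
\]
First I would invoke the cancellation lemma on $V_+$: because $V_+$ is radial and decreasing and the set of centers in $\gamma$ is symmetric under $c \mapsto -c$, the lemma gives $i[V_+,\gamma_{c_k}+\gamma_{-c_k}] \geq 0$ for every $k$, hence $i[V_+,\gamma] \geq 0$ and this term drops out.

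For the $V_j$ and $\widetilde V_j$ terms I would use (C2) together with the uniform bound $|\nabla F_c| \leq f(\infty) = M_\sigma/\sqrt a$: summing over all $2m$ pieces of $\gamma$ yields
\[
i[V_j,\gamma] \geq -4\lambda \tfrac{M_\sigma}{\sqrt a}|\nabla V_j|, \qquad i[\widetilde V_j,\gamma] \geq -4\lambda \tfrac{M_\sigma}{\sqrt a}|\nabla \widetilde V_j|,
\]
where $\lambda = \sum_k \lambda_k < 1/2$. Meanwhile Proposition \ref{prop:Delta} applied at every center of $\gamma$ gives
\[
i[-\Delta,\gamma] \geq \bigl(4-\tfrac{\sigma}{(n-2)^2}\bigr)\sum_{j=1}^m \lambda_j \bigl\{ \langle x-c_j\rangle^{-\sigma}(-\Delta)\langle x-c_j\rangle^{-\sigma} + \langle x+c_j\rangle^{-\sigma}(-\Delta)\langle x+c_j\rangle^{-\sigma}\bigr\}.
\]

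The key algebraic step is to split each of the two Laplacian sums as $1 = (1-2\lambda) + 2\lambda$. The $(1-2\lambda)$ share of both the $+c_j$ and $-c_j$ sums is kept intact; this is precisely what appears in the stated lower bound. The remaining $2\lambda$ share of the $+c_j$ sum is paired with $-4\lambda (M_\sigma/\sqrt a)|\nabla V_j|$: Hardy's inequality $-\Delta \geq (n-2)^2/(4|x-c_j|^2)$ together with the assumed bound on $V_j$ gives
\[
2\lambda\lambda_j\bigl((n-2)^2-\tfrac{\sigma}{4}\bigr)\frac{1}{|x-c_j|^2 \langle x-c_j\rangle^{2\sigma}} - 4\lambda \tfrac{M_\sigma}{\sqrt a}|\nabla V_j| \geq 0,
\]
and, symmetrically, the $2\lambda$ share of the $-c_j$ sum absorbs $-4\lambda(M_\sigma/\sqrt a)|\nabla \widetilde V_j|$ by the parallel assumption on $\widetilde V_j$. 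This symmetric absorption, which was impossible in the previous theorem because $\widetilde V_j$ was absent and the $-c_j$ share had to be left sitting there, is exactly what recovers the factor of $2$ in the constraint $\lambda < 1/2$ rather than $\lambda < 1/4$.

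The only real obstacle is bookkeeping: one must verify that the crude bound $4\lambda(M_\sigma/\sqrt a)|\nabla V_j|$ already accounts for the cross contributions $i[V_j,\gamma_{-c_k}]$ even though $\gamma_{-c_k}$ is centered far from the support of $V_j$, and similarly for $\widetilde V_j$. Because $|\nabla F_c|\leq f(\infty)$ is uniform in $c$, the crude estimate does cover all these cross terms, and the two absorption steps above proceed independently at the two families of centers. No ingredient beyond the cancellation lemma, Proposition \ref{prop:Delta}, and Hardy's inequality is required.
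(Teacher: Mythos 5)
Your proof is correct and follows essentially the same route the paper uses for the preceding ``one positive bump'' theorem (and which the remark in the paper explicitly indicates should be adapted here by ``utilizing $i[-\Delta,\gamma_{-c_j}]$''). The decomposition, the invocation of the cancellation lemma to dispose of $V_+$, the uniform bound $|\nabla F_c|\le f(\infty)$ to collapse all the cross-commutators $i[V_j,\gamma_{\pm c_k}]$ into $-4\lambda\frac{M_\sigma}{\sqrt a}|\nabla V_j|$, and the split $1=(1-2\lambda)+2\lambda$ on \emph{both} Laplacian sums followed by Hardy-absorption at the two families of centers, is exactly the intended argument; and your final caveat about the cross terms is correctly resolved by the $c$-independence of $\|\nabla F_c\|_\infty$.

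One small slip in the absorption step: you invoke Hardy's inequality in the form $-\Delta\ge (n-2)^2/(4|x-c_j|^2)$ and accordingly write $|x-c_j|^2$ in the denominator. But the hypothesis of the theorem (and of all the preceding theorems in this section) carries $r^2=|x|^2$, not $|x-c_j|^2$, in the denominator — $r=|x|$ is fixed globally at the start of the paper. To match the hypothesis you must use the origin-centered Hardy inequality $-\Delta\ge (n-2)^2/(4|x|^2)$, which when sandwiched by $\langle x-c_j\rangle^{-\sigma}$ produces precisely $((n-2)^2-\sigma/4)\,r^{-2}\langle x-c_j\rangle^{-2\sigma}$. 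With that correction the absorption inequality
\begin{equation*}
2\lambda\lambda_j\bigl((n-2)^2-\tfrac{\sigma}{4}\bigr)\frac{1}{r^2\langle x-c_j\rangle^{2\sigma}} - 4\lambda\tfrac{M_\sigma}{\sqrt a}|\nabla V_j|\ \ge\ 0
\end{equation*}
is literally the assumed condition multiplied by $2\lambda$, and similarly for $\widetilde V_j$ at $-c_j$; the rest of the argument is unchanged.
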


\bibliographystyle{alpha}
\bibliography{SofXi-bib-1}
\end{document}